\title[Weaker cousins of Ramsey's theorem over a weak base theory]{Weaker cousins of Ramsey's theorem \\ over a weak base theory}
\author{Marta Fiori-Carones}\thanks{The authors were supported by grant no.~2017/27/B/ST1/01951 of the National Science Centre, Poland.}
\address{Institute of Mathematics, University of Warsaw, 
Banacha 2, 02-097 Warszawa, Poland}
\email{marta.fioricarones@outlook.it}
\urladdr{https://martafioricarones.github.io}
\author{Leszek Aleksander Ko\l{}odziejczyk}
\address{Institute of Mathematics, University of Warsaw, 
Banacha 2, 02-097 Warszawa, Poland}
\email{lak@mimuw.edu.pl}
\author{Katarzyna W. Kowalik}
\address{Institute of Mathematics, University of Warsaw, 
Banacha 2, 02-097 Warszawa, Poland}
\email{katarzyna.kowalik@mimuw.edu.pl}
\urladdr{}
\date{\today}
\begin{document}

\maketitle

\begin{abstract}
The paper is devoted to a reverse-mathematical study of some well-known consequences of Ramsey's theorem for pairs, focused on the chain-antichain principle $\cac$, the ascending-descending sequence principle $\ads$, and the Cohesive Ramsey Theorem for pairs $\crttt$. We study these principles over the base theory $\rcas$, which is weaker than the usual base theory $\rca$ considered in reverse mathematics in that it allows only $\Delta^0_1$-induction as opposed to $\Sigma^0_1$-induction. In $\rcas$, it may happen that an unbounded subset of $\Nb$ is not in bijective correspondence with $\Nb$. Accordingly, Ramsey-theoretic principles split into at least two variants, \lq\lq normal\rq\rq\ and \lq\lq long\rq\rq, depending on the sense in which the set witnessing the principle is required to be infinite.

We prove that the normal versions of our principles,
like that of Ramsey's theorem for pairs and two colours, are equivalent to their relativizations to proper $\Sigma^0_1$-definable cuts. Because of this, they are all $\Pi^0_3$- but not $\Pi^1_1$-conservative over $\rcas$, and, in any model of $\rcas + \neg \rca$, if they are true then they are computably true relative to some set.
The long versions exhibit one of two behaviours: they either imply $\rca$ over $\rcas$ or are $\Pi^0_3$-conservative over $\rcas$. The conservation results are obtained using a variant of the so-called grouping principle.

We also show that the cohesion principle $\coh$, a strengthening of $\crttt$, is never computably true in a model of $\rcas$ and, as a consequence, does not follow from $\rttt$ over $\rcas$.
\end{abstract}

\bigskip

%\todo{add grant info}

\medskip

\noindent MSC: 03B30, 03F30, 03F35, 03H15, 05D10 \\
Keywords: reverse mathematics, Ramsey's theorem, models of arithmetic, conservation theorems

%\marta{
%\begin{enumerate}
    %\item The metamathematics of stable {R}amsey's theorem for pairs\rq\rq and \lq\lq The inductive strength of {R}amsey's theorem for pairs\rq\rq\ appear in the bib file, but they are not cited in the paper. Should we cite them somewhere?
    %\item the first name of Chong appeared in our bibliography in three different ways: C.\ T.\, Chi-Tat, Chitat. I had a look at Chong's webpage and the name is \lq\lq Chi Tat\rq\rq. However, in his list of publications (and in all of his papers I looked at) Chong appears as \lq\lq  C.\ T.\ Chong \rq\rq 99\% of the time, even when the first name of the coauthors appears in full. For the moment I uniformised our bibliography entries, writing C.\ T.
    %\item Use $\vDash$ instead of $\models$!
    %\item use $\Delta^0_1\text{-}\mathrm{Def}(M,A)$ instead of $\Delta_1(A)\text{-}\mathrm{Def}(M,A)$
    %\item shall we write some keywords and the MSC classes?  	\\
    %Primary: 03B30 Foundations of classical theories (including reverse mathematics), Secondary: 03F30 First-order arithmetic and fragments, 03F35   	Second- and higher-order arithmetic and fragments, 03H15 Nonstandard models of arithmetic, 05C55 Generalized Ramsey theory, 05D10  Ramsey theory , 06A07 Combinatorics of partially ordered sets 
   %\item Keywords: reverse mathematics, Ramsey's theorem, models of arithmetic, conservation theorems
%\end{enumerate}}

%\leszek{How much to talk about $\rt{n}{k}$?}\kasia{And make sure we don't interchange $\rt{n}{k}$ and $\rttt$ randomly in the statements of the theorems etc.}

\section{Introduction}

%\marta{\begin{enumerate}
 %   \item define $\rca$ and $\rcas$ 
 %   \item why $\rcas$ and mention previous papers on $\rcas$ and $\rttt$
 %   \item \kasia{Possibly a word about different concepts of infinity}
 %   \item mention all the other principles, summarise the main results over $\rca$
 %   \item \kasia{three ways of comparing our principles: implication between them, first order parts, closure of $\mathbb{I}^0_1$}
 %   \item comment on $\rttt \imp \coh$ and $\rttt \imp \crttt$  
%\end{enumerate}}
%\begin{description}[leftmargin=20mm,rightmargin=10mm]
%\item[$\rt{n}{k}$]\itshape For each colouring $c\colon [\Nb]^n\to k$ there exists an unbounded homogeneous set.

%\item[$\lrt{n}{k}$]\itshape For each colouring $c\colon [\Nb]^n\to k$ there exists a homogeneous set of universe cardinality.
%\end{description}

The logical strength of Ramsey-theoretic principles has been one of the most important research topics in reverse mathematics for over two decades. Statements from Ramsey theory are an appealing subject for logical analysis,
because they are often not equivalent to any of the usual set existence principles encountered in second-order arithmetic, and they form a complex web of implications and nonimplications (see \cite{Hirschfeldt15} for an introduction to the area). Moreover, characterizing the first-order consequences of Ramsey-theoretic statements is 
frequently an interesting and demanding task.

As is the custom in reverse mathematics, 
the strength of such statements
is usually investigated over the base theory $\rca$,
a fragment of second-order arithmetic that 
includes the $\Delta^0_1$-comprehension axiom and the mathematical induction scheme for $\Sigma^0_1$-definable properties. A weaker alternative to $\rca$, introduced in \cite{SimpsonSmith} and known as $\rcas$,
allows induction only for $\Delta^0_1$
properties. Working in a weak base theory makes it possible to track nontrivial uses of induction and to make some fine distinctions that disappear over $\rca$, but it also comes with additional 
technical and conceptual challenges.

An issue of particular relevance to Ramsey-theoretic principles is that many of them assert the existence of an \emph{infinite} set $Y \subseteq \Nb$ that relates in a certain way
to a given colouring of tuples. $\rcas$ is weak enough that the precise definition of what it means to be an infinite subset of $\Nb$ becomes important. Usually, one only requires that $Y$ be unbounded in $\Nb$, and this gives rise to what we call \lq\lq normal\rq\rq\ versions of the principles. However, over $\rcas$ being unbounded is strictly weaker than being the range of a strictly increasing map with domain $\Nb$. \lq\lq Long\rq\rq\ versions of principles can be obtained by requiring $Y$ to have the latter property.

The strength of Ramsey's Theorem over $\rcas$ was investigated in \cite{yokoyama2013onthestrength} and \cite{kky:ramsey-rca0star}. The upshot of that work is that in all nontrivial cases, the normal version of Ramsey's Theorem for a fixed length of tuples and number of colours is partially conservative but not $\Pi^1_1$-conservative over $\rcas$. On the other hand, the long version of Ramsey's Theorem is strong enough to imply $\rca$.

In this paper, we ask the question whether the same general pattern also holds for other Ramsey-theoretic principles, in particular the various natural weakenings of Ramsey's Theorem for pairs that are commonly studied in reverse mathematics. Many of our results could be stated in relatively general way, but for illustrative purposes, we find it useful to concentrate on a small number of specific principles. We mostly consider two statements about linear orders, namely the chain-antichain principle $\cac$ and the ascending-descending sequence principle $\ads$, as well as the cohesive version of Ramsey's theorem for pairs and two colours $\crttt$. (The definitions are recalled in Section \ref{Subsec_normal-long}.) The statements $\cac$, $\ads$, and $\crttt$ are not only combinatorially natural, but also reasonably well-understood in the traditional reverse-mathematical setting: over $\rca$ they form a strict linear order in terms of implication, and each of them is known to be fully conservative over a  classical fragment of first-order arithmetic.

We show that normal versions of our principles,
just like those of $\rt{n}{k}$, belong to a class of statements that we call \lq\lq pseudo--second-order\rq\rq. The behaviour of any such statement in a model of $\rcas + \neg \iso$ is governed by the proper $\Sigma^0_1$-definable cuts of the model. As a consequence, normal versions 
of $\cac$, $\ads$, and $\crttt$ are $\Pi^0_3$- but not $\Pi^1_1$-conservative over $\rca$, and they have the curious feature that whenever they are true in a structure satisfying $\rcas + \neg \iso$, they are actually computably true in that structure relative to a set parameter witnessing the failure of $\iso$.
We also show that $\cac$ and $\ads$ are significantly weaker than $\rttt$ in a technical sense related to closure properties of cuts. The strength of $\crttt$ 
in this sense is left open, as is the question whether $\ads$ or $\cac$ imply $\crttt$ over $\rcas$.

We then show that long versions of Ramsey-theoretic principles tend to behave in one of two ways. Some, like $\cac$, imply $\rca$ by an easy argument dating back to \cite{yokoyama2013onthestrength}. Others,
like $\crttt$, are equivalent to normal versions of the corresponding principles in $\rcas$ or in its extension by Weak K\"onig's Lemma. As a result, these principles remain $\Pi^0_3$-conservative over $\rca$. In the case of $\ads$, both behaviours are possible depending on how exactly the principle is formalized.

We also study the cohesion principle $\coh$, a well-known strengthening of $\crttt$ that does not fit neatly into the classification into normal and long principles. It follows immediately from our results on $\crttt$ that $\coh$ is not $\Pi^1_1$-conservative over $\rcas$, which answers a question of Belanger \cite{Belanger}. Our main result about $\coh$ as such is that in contrast to many other statements we consider, it can never be computably true, even in a model of $\rcas + \neg \iso$. As a consequence, $\coh$
is not implied by $\crttt$, $\ads$, or even $\rttt$
provably in $\rcas$. 

The remainder of this paper is structured as follows. In Section 2, we discuss the necessary definitions and background, including precise formulations of the normal and long versions of our principles. We study the normal versions in Section 3, the long versions in Section 4, and $\coh$ in Section 5.

\section{Preliminaries}

%\marta{\begin{enumerate}
    %\item \kasia{language, axioms, models}
    %\item \kasia{first-order parts of base theories; extending first-order structures to models of $\rca$ and $\rcas$ by adding recursive sets.}
    %\item arithmetical hierarchy, lightface vs boldface ($\Sigma_n$ vs $\Sigma^0_n$), $\forall \Pi^0_3$
    %\item $\Sigma_n(A)$-definable /  $\Sigma_n$-definable in $A$
    %\item \kasia{$\mathbb{N}$ versus $\omega$}
    %\item $\cof$
    %\item define superexp
    %\item define  $\Delta_1(M)$ and $\Delta^0_1(M,A)$ 
    %\item recall the definition of induction and bounding??
    %\item \kasia{Define $\rttt$ (just like other principles below)}
%\end{enumerate}}

%\todo{\marta{How much we want to say about reverse mathematics preliminaries?}\kasia{For sure some short story about the pinpointing the strength of RT22. We may explain why weaking the base theory is interesting and mention that it was first done by Simpson-Smith. When defining RCA* we need to say that first order part of RCA* is BSigma1+exp and $\bso=\ido$ (Slaman).}}

We assume that the reader has some familiarity with the language of second-order arithmetic and with the most common fragments of second-order arithmetic like $\rca$ and $\wkl$,
as described in \cite{simpson:sosoa} or \cite{Hirschfeldt15}. We also assume familiarity with the usual induction and collection (or bounding) schemes encountered in first- and second-order arithmetic. Background in first-order arithmetic that is not covered in \cite{Hirschfeldt15} will be discussed below.

The symbol $\omega$ denotes the set of standard natural numbers, while $\Nb$ denotes the set of natural numbers as formalized within an arithmetic theory. In other words, if $(M,\mc{X})$ is a model of some fragment of second-order arithmetic, then $\Nb^{(M,\mc{X})}$ is simply the first-order universe $M$. The symbol $\le$ denotes the usual order on $\Nb$.

We write $\Delta^0_n$, $\Sigma^0_n$, $\Pi^0_n$ to denote the usual formula classes defined
in terms of first-order quantifier alternations, but allowing second-order free variables. On the other hand, notation without the superscript $0$, like $\Delta_n$, $\Sigma_n$, $\Pi_n$,
represents analogously defined classes of purely first-order, or ``lightface'', formulas,
that do not contain any second-order variables at all. If we want to specify the second-order parameters appearing
in a $\Sigma^0_n$ formula, we use notation like $\Sigma_n(A)$.
We extend these conventions to naming theories.
If $\Gamma$ is a class of formulas, then
$\forall\Gamma$ denotes the class of universal closures of formulas from $\Gamma$.
Note, for example, that
$\forall\Sigma^0_n$ and $\forall\Pi^0_{n+1}$
are the same class.

The theory $\rcas$, originally defined in
\cite{SimpsonSmith}, is obtained from $\rca$ by replacing the $\iso$ axiom with the weaker axiom of $\Delta^0_0$-induction (by $\Delta^0_1$-comprehension, this immediately implies induction for all $\Delta^0_1$-definable properties) and adding a $\Pi_2$ axiom $\exp$ that explicitly guarantees the totality of exponentiation. The theory $\wkls$ is obtained from $\wkl$ in an analogous way.
$\rcas$ proves the collection scheme $\bso$, and the first-order consequences of $\rcas$ and of $\wkls$ are axiomatized by $\bs{}{1} + \exp$%\todo{\marta{In the introduction and abstract we say that $\rcas$ is $\ido + \exp$. Shall we explain that this is the same as $\bso + \exp$ (or change the introduction)?}}.

When we consider a model $(M,\mc{X}) \vDash \rcas$ (or work in $\rcas$ without reference to a specific model), a \emph{set} is an element of the second-order universe $\mc{X}$. 
In contrast, a \emph{$\Sigma^0_n$-definable} set or simply  \emph{$\Sigma^0_n$-set} is any subset of the first-order universe $M$ that is definable in $(M,\mc{X})$ by a $\Sigma^0_n$ formula (and likewise for $\Sigma_n$-sets, $\Pi_n$-sets etc.) A \emph{$\Delta^0_n$-definable set} or \emph{$\Delta^0_n$-set} is a $\Sigma^0_n$-set that is simultaneously a $\Pi^0_n$-set.
Since in general the models we study only satisfy $\Delta^0_1$-comprehension, 
$\Delta^0_n$-sets for $n \ge 2$ and $\Sigma^0_n$-sets for $n \ge 1$ will not always be sets. We write $\Delta_1\text{-}\mathrm{Def}(M)$ for the collection of the $\Delta_1$-definable subsets of $M$ and $\Delta^0_1\text{-}\mathrm{Def}(M,A)$ for the collection of $\Delta_1(A)$-definable subsets, where $A \subseteq M$. 
If $(M,A) \vDash \bs{}{1}(A) + \exp$,
then $(M, \Delta^0_1\text{-}\mathrm{Def}(M,A))$ is a model of $\rcas$.

Already $\id{}{0} + \exp$ is strong enough
to support a well-behaved universal $\Sigma_1$ formula $\mathrm{Sat}_1(x,y)$. We can define
the $\Sigma_1$-set $0'$ as $\{e: \mathrm{Sat}_1(e,e)\}$.

A \emph{cut} $I$ in a model of arithmetic $M$ is a downwards-closed subset of $M$ which is also closed under successor. $M$ is then an \emph{end-extension} of $I$, and it is common to write $I \subseteq_e M$, or $I \subsetneq_e M$ if $I$ is a proper cut.
%namely $I \ne M$, we write $I \subsetneq_e M$.
The cut $I$ is a $\Sigma^0_1$-cut exactly if it is $\Sigma^0_1$-definable.
%, for some $n \in \Nb$, if there exist a $\Sigma^0_n$-formula $\varphi$ such that $i \in I \Biimp M \vDash \varphi(i)$; for convenience we call cuts which are $\Sigma^0_n$-definable simply $\Sigma^0_n$-cuts. 

A set $A$ is \emph{unbounded} if for every $x \in \Nb$ there exists $y \in A$ with $y \ge x$. We write
$A \cof \Nb$ to indicate that $A$ is unbounded,
and more generally $A \cof B$ to indicate that $A$ is an unbounded subset of $B$. The set $A$
\emph{has cardinality $\Nb$} if it contains an $n$-element finite subset for each $n \in \Nb$,
or equivalently if it can be enumerated in increasing order as $\{a_n: n \in \Nb\}$.
Provably in $\rcas$, a set of cardinality $\Nb$
is unbounded. However, it was shown in \cite{SimpsonYokoyama}*{Lemma 3.2} that the statement \lq\lq every unbounded set has cardinality $\Nb$\rq\rq\ implies $\rca$ over $\rcas$. In other words, $\rcas + \neg \iso$
proves the existence of an unbounded set that does not have cardinality $\Nb$. Each such set can be enumerated in increasing order
as $A = \{a_i \mid i \in I\}$ for some proper $\Sigma^0_1$-cut $I$. Conversely, given a $\Sigma^0_1$ cut $I$, we can use $\Delta^0_1$-comprehension to form the set $\{\langle w_0,\ldots, w_i\rangle : i \in I\}$, where each $w_j$ is the smallest element witnessing that $j \in I$. Thus, we have:

%\begin{lemma}
%Let $M\vDash \bs{0}{n}$ and $I$ be a $\Sigma^0_n$-cut in $M$. Then there exists a $\Sigma^0_n$-definable function $f \colon I \to M$ such that $\ran(f) \cof M$. 
%\end{lemma}

\begin{proposition}\label{cofinalset}
Let $(M, \mathcal{X})\vDash\rcas$. For each $\Sigma^0_1$-cut $I$ there exists a set $A\in\mathcal{X}$ with $A \cof M$
that can be enumerated in increasing order as $A= \{a_i \mid i\in I\}$.
\end{proposition}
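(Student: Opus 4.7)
The plan is to make rigorous the construction sketched in the paragraph preceding the proposition. Fix a $\Sigma^0_1$-definition $I = \{x : \exists y\, \varphi(x,y)\}$, where $\varphi$ is $\Delta^0_0$ and may involve a set parameter from $\mathcal{X}$. For each $j \in I$, the $\Delta^0_0$-least-number principle --- which follows from the $\Delta^0_0$-induction axiom of $\rcas$ --- provides a smallest element $w_j \in M$ satisfying $\varphi(j, w_j)$. I will then take
\[ A := \{\sigma_i : i \in I\}, \quad \text{where } \sigma_i := \langle w_0, w_1, \ldots, w_i\rangle, \]
and set $a_i := \sigma_i$.

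To place $A$ in $\mathcal{X}$, I observe that $A$ has a $\Delta^0_0$ definition: $x$ lies in $A$ iff $x$ codes a non-empty sequence such that, for every $j < |x|$, the entry $x(j)$ satisfies $\varphi(j, x(j))$ and is minimal with this property (i.e.\ $\forall y < x(j)\, \neg\varphi(j,y)$). Any $x$ meeting this condition automatically witnesses $|x|-1 \in I$, so the formula picks out exactly the codes $\sigma_i$ for $i \in I$. Hence $A \in \mathcal{X}$ by $\Delta^0_1$-comprehension. Moreover, whenever $i, i+1 \in I$, $\sigma_i$ is a proper initial segment of $\sigma_{i+1}$, and under the standard sequence coding this forces $\sigma_i < \sigma_{i+1}$, so the enumeration $i \mapsto a_i$ is strictly increasing on $I$.

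It remains to show $A \cof M$. Since $\sigma_i \ge w_i$ for each $i \in I$, it is enough to verify that the least witnesses $(w_j)_{j \in I}$ are cofinal in $M$. In the nontrivial case where $I$ is a proper cut, I will argue by contradiction: suppose some $b \in M$ were an upper bound for all the $w_j$. Then
\[ I = \{x : \exists y \le b\, \varphi(x,y)\} \]
would be $\Delta^0_0$-definable, and, being a cut, would contain $0$ and be closed under successor. $\Delta^0_0$-induction then forces $I = M$, contradicting properness. (When $I = M$ already, cofinality is immediate from the strict monotonicity of $i \mapsto \sigma_i$ on all of $M$.)

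The heart of the argument is this single application of $\Delta^0_0$-induction to deduce cofinality from properness of the cut; the remaining steps are routine bookkeeping about sequence codes and bounded definability. Since the required instance of induction is parametrised by a set and is precisely what $\rcas$ supplies, I anticipate no real obstacle.
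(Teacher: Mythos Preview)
Your argument is correct and follows exactly the construction sketched in the paragraph preceding the proposition; the paper offers no further proof beyond that sketch, and you have filled in the details faithfully. One small point you leave implicit: to know that the code $\sigma_i = \langle w_0,\ldots,w_i\rangle$ actually exists for each $i \in I$ (needed both for the enumeration claim and for the cofinality argument via $\sigma_i \ge w_i$), you use $\bso$ to bound $w_0,\ldots,w_i$ uniformly and then $\exp$ to form the sequence code---this is indeed routine in $\rcas$, but it is the one place where your ``bookkeeping'' remark is doing real work.
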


If $A= \{a_i \mid i \in I\}$ for some $\Sigma^0_1$-cut $I$, we sometimes write $a_{-1}$ for $-1$.
%\begin{proof}
%Let $I\subseteq_e M$ be defined by $\exists y \, \theta (x,y)$ where $\theta$ is a bounded formula. Define $A=\{\la i, s_i\ra \mid \forall j<i \;\;\theta(j, s_i(j))$ and $s_i$ is the smallest sequence with this property$\}$. Clearly $A$ is $\Delta^0_1$-definable. By the definition of $I$ and $\bso$ $I$ is the set of precisely those $i\in M$ for which such an $s_i$ exists.
%Also, for $i<j$ we have $\la i, s_i \ra < \la j, s_j \ra$ because of the monotonicity of the pairing function and the minimality of $s_i$ and $s_j$.
%Finally, $A$ is cofinal in $M$ because otherwise the set of witnesses for belonging to the cut $I$ would be bounded and hence $I$ would be $\Delta^0_0$-definable.
 %Notice also that $\la i,y\ra$ is the $i^{\text{th}}$-element of $A$, so that $A$ is the desidered set.
%\end{proof}

A bounded subset of a model $M \vDash \id{}{0} + \exp$ is \emph{coded} in $M$ if it has the form $(s)_\ack = \{x \in M \mid M \vDash x \in_{\ack} s\}$ for some $s \in M$, where $x \in_\ack s$ denotes the usual $\Delta_0$ formula expressing that the $x^{th}$ digit in the binary expansion of $s$ is $1$.
For a cut $I \subsetneq_e M$ we let $\codmi = \{ I \cap (s)_{\ack}  \mid s\in M \}$ stand for the collection of subsets of $I$ which are coded in $M$. Note that $\codmi$ can be viewed as a second-order structure on $I$. If $I$ is closed under exponentiation, then $(I, \codmi) \vDash \wkls$ \cite{SimpsonSmith}*{Theorem 4.8}.

The following lemma states an important special case of a more general result about coding in models of $\bs{0}{n} + \exp$.

\begin{lemma}[Chong-Mourad \cite{ChongMourad}] \label{ChongMourad}
Let $(M, \mc{X}) \vDash \rcas $ and let $I$ be a proper $\Sigma^0_1$-cut in $(M,\mc{X})$.
If $X \subseteq I$ is such that both $X$ and $I \setminus X$ are $\Sigma^0_1$-definable, then $X \in \codmi$.	
\end{lemma}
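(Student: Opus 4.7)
My plan is to produce a bounded $\Delta^0_0$-definable subset of $M$ that agrees with $X$ on $I$; the corresponding code $s \in M$ will then satisfy $I \cap (s)_\ack = X$. To set things up, I would fix $\Delta^0_0$-formulas $\varphi_0$ and $\psi_0$ with $X = \{x : \exists y\, \varphi_0(x,y)\}$ and $I \setminus X = \{x : \exists y\, \psi_0(x,y)\}$, and let $\tau(x,y) := \varphi_0(x,y) \vee \psi_0(x,y)$. Since $X$ and $I \setminus X$ partition $I$, the formula $\exists y\, \tau(x,y)$ is itself a $\Sigma^0_1$-definition of $I$. I would then pick $a \in M \setminus I$ (available because $I$ is proper); as $I$ is downward closed, $I \subseteq [0,a)$.

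The key step is to produce $b \in M$ such that, for every $x \in I \cap [0,a)$, some $y < b$ satisfies $\tau(x,y)$. Given such a $b$, the set $\{x < a : \exists y < b\, \varphi_0(x,y)\}$ is bounded and $\Delta^0_0$-definable, so by $\exp$ it is coded by some $s \in M$. A case split then yields $I \cap (s)_\ack = X$: if $x \in X$ the witness for $\varphi_0$ falls below $b$ by the choice of $b$, while if $x \in I \setminus X$ no $y$ satisfies $\varphi_0(x,y)$ at all, so in particular no $y<b$ does.

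The main obstacle, and the technical heart of the argument, is the production of $b$. Applying $\bso$ to $\tau$ directly does not work, because $\exists y\, \tau(x,y)$ defines $I$ and hence $\tau$ is not universally witnessed on $[0,a)$; and trying to bypass this by augmenting $\tau$ with a $\Delta^0_0$-clause intended to be vacuously satisfied outside $I$ inevitably becomes trivially satisfied inside $I$ as well, giving no real bound. Chong and Mourad's trick in \cite{ChongMourad}, which I would reproduce, is to apply $\bso$ to a different $\Sigma^0_1$-formula combining the $\Sigma^0_1$-definitions of $X$, $I \setminus X$ and $I$ in tandem --- one that is universally witnessed on $[0,a)$ yet still yields the required bound on $\tau$-witnesses for $x \in I$. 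Once this bound is in hand, everything else is $\Delta^0_0$-bookkeeping inside $\id{}{0} + \exp$.
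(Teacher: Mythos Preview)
Note first that the paper does not give its own proof of this lemma; it is quoted from \cite{ChongMourad} and stated without argument, so there is no in-paper proof to compare against. Your proposal, however, contains a fatal gap.

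The bound $b$ you plan to produce cannot exist. Suppose there were $b \in M$ such that every $x \in I$ admits some $y < b$ with $\tau(x,y)$. Then the $\Delta^0_0$-formula $\exists y\!<\!b\,\tau(x,y)$ would define exactly $I$: it holds for $x \in I$ by the choice of $b$, and it fails for $x \notin I$ because such $x$ lie in neither $X$ nor $I \setminus X$, so $\neg\exists y\,\varphi_0(x,y)$ and $\neg\exists y\,\psi_0(x,y)$, whence $\neg\exists y\,\tau(x,y)$ outright. Thus $I$ would be a proper cut defined by a $\Delta^0_0$ formula (with parameters), contradicting the $\Delta^0_0$-induction scheme that is part of $\rcas$. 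Consequently no $\Sigma^0_1$ formula, however cleverly crafted, can be ``universally witnessed on $[0,a)$ yet still yield the required bound on $\tau$-witnesses for $x \in I$'': that bound is simply not available.

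Whatever the Chong--Mourad argument does, it therefore cannot proceed by uniformly bounding the $\tau$-witnesses across all of $I$; your description of their trick is a misrecollection. An actual proof has to manufacture a code $s$ with $(s)_\ack \cap I = X$ \emph{without} ever obtaining such a uniform bound --- for instance by exploiting the freedom in how $(s)_\ack$ may behave on $[0,a)\setminus I$, or by passing through the cofinal set of \cref{cofinalset} to reindex the problem. You should consult \cite{ChongMourad} directly.
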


The iterated exponential function
is defined inductively as follows: $\exp_0(y)=1$,  $\exp_{x+1}(y)=y^{\exp_x(y)}$.
The axiom $\mathrm{supexp}$,
provable in $\rca$ but not in $\rcas$,
states that the iterated exponential function is total, i.e.~$\exp_x(y)$ exists for every $x$ and $y$.

\begin{proposition} \label{ModelExt}
For each countable $(M,\mc{X})\vDash \wkl$ there exists $K \supsetneq_e M$ such that $K \vDash \bs{}{1} + \exp$,
$M$ is a $\Sigma_1$-cut of $K$, and $\cod(K/M) = \mc{X}$. 
\end{proposition}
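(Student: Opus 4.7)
The plan is to construct $K$ by a classical end-extension argument in the spirit of Paris--Kirby, combined with a $\wkl$-based control of which subsets of $M$ end up being coded. Concretely, I would take $K$ to be the closure of $M\cup\{c\}$ under $\Delta_0$-definable functions (including $\exp$), for a single new element $c>M$ obtained as a generic object in a forcing over $(M,\mc{X})$.

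Using countability of $(M,\mc{X})$, I would first enumerate $\mc{X}=\{A_n\}_{n\in\omega}$ alongside a list of the $\Sigma_1$-formulas (with parameters from $M$ and oracles from $\mc{X}$) whose truth-values need to be fixed in order to make $\bs{}{1}$ hold in $K$. A forcing condition would be a pair consisting of a finite string of bits committing a partial specification of the Ackermann expansion of $c$, together with an infinite binary tree $T\in\mc{X}$ constraining the remaining bits. At stage $n$ the current condition is refined so that $c$ exceeds the $n$-th element of $M$, so that a designated $\Delta_0$-definable shift of $c$ has $A_n$ as its $M$-trace under $\in_\ack$ (achieved by committing the appropriate bits), and so that the $n$-th formula from the enumeration is decided coherently with collection. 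Each refinement amounts to finding an infinite path in some tree in $\mc{X}$, so $\wkl$ keeps the forcing alive.

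With $c$ in hand I would verify: $K\vDash\exp$ by construction; $K\vDash\bs{}{1}$ from the $\Sigma_1$-decisions embedded in the forcing; $M$ is a proper $\Sigma_1$-cut of $K$ because $c$ majorizes $M$ and a standard overspill-style argument exhibits $M$ as $\Sigma_1$-definable in $K$ with $c$ as a parameter; and $\cod(K/M)=\mc{X}$, where $\supseteq$ holds by construction while $\subseteq$ follows because every $k\in K\setminus M$ has the form $t(c,\bar m)$, so its $M$-trace under $\in_\ack$ is already a set in $\mc{X}$ by the closure properties afforded by $\wkl$.

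The hard part will be the simultaneous control of $\cod(K/M)$ and $\bs{}{1}$: a freely generic $c$ satisfies $\bs{}{1}$ automatically but may introduce spurious codes, whereas a $c$ tightly engineered to realize exactly the sets in $\mc{X}$ tends to break collection. The careful interleaving of coding commitments and $\Sigma_1$-decisions, held compatible by $\wkl$, is where the argument's real content sits.
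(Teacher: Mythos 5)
Your proposal sets out to build the end-extension from scratch by forcing, which in effect amounts to reproving the theorem that the paper simply imports. The paper's proof cites Tanaka's end-extension theorem for countable models of $\wkl$ to obtain $L \supsetneq_e M$ with $(L,\Delta_1\textrm{-}\mathrm{Def}(L)) \vDash \rca$ and $\cod(L/M)=\mc{X}$, and then merely sets $K=\sup\{\exp_m(a) \mid m\in M\}$ for a fixed $a\in L\setminus M$. All three required properties then come essentially for free: $K\vDash\bs{}{1}+\exp$ because $K$ is a proper exponentially closed cut of a model of $\is{}{1}$; $M$ is a $\Sigma_1$-cut of $K$ because $m\in M$ iff $\exp_m(a)$ exists in $K$; and $\cod(K/M)=\cod(L/M)=\mc{X}$. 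Your sketch correctly identifies where the real difficulty sits, but it does not resolve it, and several individual steps are unsupported.

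Concretely: (1) you never show that the $\Delta_0$-closure of $M\cup\{c\}$ is a model of $\id{}{0}$ that \emph{end}-extends $M$, i.e.\ that every term value $t(c,\bar m)$ lying below an element of $M$ already belongs to $M$; this is the heart of any end-extension theorem and does not follow from genericity without argument. (2) The inclusion $\cod(K/M)\subseteq\mc{X}$ is asserted to follow from ``closure properties afforded by $\wkl$'', but the $\in_\ack$-trace of $t(c,\bar m)$ on $M$ is determined by the full $\Delta_0$-type of $c$ over $M$, an external object assembled in $\omega$ stages that need not be coded in $\mc{X}$; one must arrange that each individual trace is decided by a single condition (a single tree in $\mc{X}$ together with a committed stem), and that is precisely the delicate part you do not carry out. (3) ``Deciding $\Sigma_1$-formulas coherently with collection'' is not a construction: in your self-contained term model there is no ambient larger model, so you cannot appeal to the standard fact that proper exponentially closed cuts of models of $\id{}{0}$ satisfy $\bs{}{1}$, which is exactly how the paper obtains collection without forcing it. (4) In a one-generator $\Delta_0$-closure, $M$ need not be $\Sigma_1$-definable at all; an ``overspill-style argument'' is not a proof here, and the paper secures $\Sigma_1$-definability precisely by choosing $K$ so that the sequence $(\exp_m(a))_{m\in M}$ is cofinal in it. A correct short route is the paper's: invoke Tanaka's theorem (or, when $M=\omega$, Scott's theorem on standard systems) for the end-extension, and then perform the iterated-exponential cut-down.
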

\begin{proof}
Suppose that $(M,\mc{X})$ is a countable model of $\wkl$. By \cite{Tanaka}, there exists
a structure $L \supsetneq_e M$ such that
$(L, \Delta_1\textrm{-}\mathrm{Def}(L)) \vDash \rca$
and $\cod(L/M) = \mc{X}$. Fix some $a \in L \setminus M$. Note that since  $L$ satisfies $\iso$ and therefore $\mathrm{supexp}$, the value $\exp_b(a)$
exists in $L$ for each $b \in L$.
Define $K \subseteq L$ so that $K = \sup (\{ \exp_m(a) \mid m\in M\})$. %Recall that since $L \vDash \iso$, then $L$ satisfies $\mathrm{supexp}$, so for each $m \in M$ there exists $y=\exp_m(a)$.
Then $K \vDash \bs{}{1} + \exp$ and $M$ is a $\Sigma_1$-cut in $K$ since $m \in M$ if and only if $K \vDash \exists y\,(y=\exp_m(a))$. Furthermore,
$\cod(K/M) = \cod(L/M) = \mc{X}$.
\end{proof}

\subsection{Normal and long versions of principles}\label{Subsec_normal-long}

Many Ramsey-theoretic statements take the form $\forall X\subseteq \Nb \, (\alpha(X) \imp \exists Y \, ({Y\textrm{ is infinite}} \land \beta(X,Y)))$, where $\alpha$ and $\beta$ are arithmetical. In this context $X$ and $Y$ are often called, respectively, \lq\lq instance\rq\rq\ and \lq\lq solution\rq\rq\ of the statement.
In $\rca$, \lq\lq $Y$ is infinite\rq\rq\ is usually formalized as \lq\lq $Y$ is unbounded\rq\rq. However, \lq\lq $Y$ is infinite\rq\rq\ could also be taken to mean
\lq\lq $Y$ has cardinality $\Nb$\rq\rq, and, as explained above, the two concepts are not equivalent in $\rcas$. Accordingly, over $\rcas$ typical Ramsey-theoretic principles will have at least two versions: one that we will take as the default and call the \emph{normal} one, in which we only require the solution $Y$ to be infinite in the sense of being unbounded; and a \emph{long} version, in which we require $Y$ to have cardinality $\Nb$.
(The word \lq\lq long\rq\rq\ is intended to emphasize that $Y$ has to be enumerated using $\Nb$ as opposed to a shorter cut.) When using standard abbreviations for various principles, we will distinguish the long versions from the normal ones by using the prefix $\ell\text{-}$.

The distinction between the two versions of Ramseyan statements was first made in the context of Ramsey's Theorem itself by Yokoyama \cite{yokoyama2013onthestrength}. 
For any $n,k \in \omega$, let $\rt{n}{k}$
be the normal version of Ramsey's Theorem for $n$-tuples and $k$ colours, \lq\lq For every
$c \colon [\Nb]^n \to k$ there exists an unbounded set $H \subseteq \Nb$ such that
$c \restric [H]^n$ is constant\rq\rq, and
let $\lrt{n}{k}$ be the long version,
which requires $H$ to have cardinality $\Nb$ (this is denoted by $\rt{n+}{k}$
in \cite{yokoyama2013onthestrength}).
It was shown in \cite{yokoyama2013onthestrength}
that $\lrttt$ implies $\iso$ over $\rcas$, while $\rcas$ extended by $\rt{n}{k}$
is $\Pi_2$-conservative over $\id{}{0} + \exp$. The study 
of $\rt{n}{k}$ over $\rcas$ was taken quite a bit further 
in \cite{kky:ramsey-rca0star}. Results obtained in that paper include the $\forall\Pi^0_3$-conservativity 
of $\rcas + \rt{n}{k}$ over $\rcas$ for each $n,k$, a complete axiomatization of $\rcas + \rt{n}{2}$ for each $n \ge 3$, and a complete axiomatization
of $\rcas + \rt{2}{2} + \neg \iso$.

The emphasis in the present paper is on principles about ordered sets, $\cac$ and $\ads$, and on the Cohesive Ramsey Theorem $\crttt$. Let us, therefore, give precise formulations of the normal and long versions for each of these principles in turn. 

%\todo{do we want more uniformity in caPitALizaTIon? compare CRT just above and chain-antichain just below \marta{I do not have a strong opinion on this. Recall to uniformise all the occurrences along the paper.}}

The chain-antichain principle $\cac$ says that every partial order defined on $\Nb$ contains either an infinite chain or an infinite antichain. Over $\rcas$, this gives rise to the following principles.

\begin{description}[rightmargin=15mm] %,itemindent=\parindent
\item[$\cac$] \itshape  For every partial order $(\Nb, \preceq)$ there exists an unbounded set $S \subseteq \Nb$ which is either a chain or an antichain in $\preceq$.

\item[$\lcac$] \itshape For every partial order $(\Nb, \preceq)$ there exists a set $S \subseteq \Nb$ of cardinality $\Nb$ which is either a chain or an antichain in $\preceq$.
\end{description}

It could be argued that a more natural formulation of $\cac$ would require the existence of an unbounded chain or antichain in any partial order on an unbounded set, not necessarily on all of $\Nb$. However, we will prove in Lemma \ref{W_ADSslimInstance} that this is equivalent to the version given above and that an analogous equivalence also holds for the normal versions of other principles we study.

The ascending-descending sequence principle $\ads$ says that every linear order on $\Nb$ contains either an unbounded increasing sequence or an unbounded decreasing sequence. There is a delicate issue here, as there can be more than one way of stating the requirement that the solution to $\ads$ has to satisfy. In the literature (see e.g.~\cites{hirschfeldtShore, Hirschfeldt15}) an ascending sequence is usually taken to mean either (i) an infinite set $S \subseteq \Nb$ on which the ordering $\preceq$ agrees with the natural number ordering $\le$ or (ii) a sequence $(s_i)_{i \in \Nb}$ properly understood (that is, a map with domain $\Nb$) such that $s_0 \prec s_1 \prec s_2 \prec \ldots$ but there is no requirement on how the $s_i$ are ordered by $\le$. One could refer to these as set and sequence solutions to $\ads$, respectively. (Set and sequence solutions corresponding to descending sequences are defined analogously.) Over $\rca$, versions of $\ads$ formulated in terms of set and sequence solutions are equivalent: a set solution obviously computes a sequence solution, but given a sequence solution $(s_i)_{i \in \Nb}$ we can also obtain a set solution by taking the set of those numbers $s_j$ that are $\le$-greater than all $s_i$ for $i < j$. 
%Over $\rca$, the formulations are equivalent, but for example some of them are not in the Weihrauch hierarchy, as shown in \cite{AstorDzhafarov}. 

Over $\rcas$, such a thinning out argument works for the normal version of $\ads$: if we are given a sequence solution
$(s_i)_{i \in I}$ with $s_0 \prec s_1 \prec \ldots$ for some cut $I$, then the set $S$ of those $s_j$ for $j \in I$ such that $s_j > s_i$ for all $i < j$ can be obtained by $\Delta^0_1$-comprehension and is unbounded provably in $\rcas$. Thus, $S$ is a set solution to $\ads$. However, if $(s_i)_{i \in \Nb}$ is a sequence solution to the long version of $\ads$, then without $\iso$ it may happen that the set $S$ obtained in this way is no longer of cardinality $\Nb$; in other words, $S$ might not be a set solution to the long version of $\ads$. This leads us formulate the following three variants of $\ads$:  

%Informally, $\ads$ guarantees that each linear order $(\Nb,\prec)$ contains either an infinite chain of order type $\omega$ or an infinite chain of order type $\omega^*$. An $\omega$ chain may be presented as a set $S$, enumerated in strictly increasing $<$-order, which is also strictly $\prec$-increasing. It is also common to present an $\omega$ chain just as an ascending sequence, namely a sequence which is  strictly $\prec$-increasing, but not necessarily $<$-increasing.

%Alternatively, we may express the property of having order type $\omega$ saying that the set $S$ is such that either each element of $S$ has only finitely many $\prec$-predecessors in $S$, i.e.\ $\forall x\in S \, \exists y\in S \, \forall z\in S\,(y<z \imp x \prec z)$. The same considerations apply for an $\omega^*$ chain with the obvious changes.

%The alternatives represented may give rise to several statements, each of which may come with a \lq standard\rq\ version, where the solution is required to be unbounded, and a \lq fat\rq\ version, where the solution is required to be of universe cardinality. In light of the results presented in the paper we state these three statements only and we comment about the remaining in the following sections.

%\todo{currently nothing about ADC here. maybe something could be said here or in the strong principles section}

\begin{description}[rightmargin=15mm]
\item[$\ads$] \itshape  For every linear order $(\Nb,\preceq)$  there exists an unbounded set $S \subseteq \Nb$ such that either for all $x, y\in S$ it holds that $x \le y$ iff $x \preceq y$ or for all  $x, y\in S$  it holds that $x \le y$ iff $x\succeq y$.

\item[$\ladsst$] \itshape  For every linear order $(\Nb,\preceq)$  there exists a set $S \subseteq \Nb$ of  cardinality $\Nb$ such that either for all $x, y\in S$ it holds that $x \le y$ iff $x \preceq y$ or for all  $x, y\in S$  it holds that $x \le y$ iff $x\succeq y$. 

\item[$\ladssq$] \itshape  For every linear order $(\Nb,\preceq)$ there exists a sequence $(s_i)_{i \in \Nb}$ which is either strictly $\preceq$-increasing or strictly $\preceq$-decreasing.
\end{description}

Notice that $\ladsst$ clearly implies $\ladssq$. On the other hand, it will follow from \cref{S_iso} and \cref{cor:ladssq-conservative} that the converse implication does not hold over $\rcas$. 

The final principle we focus on is the Cohesive Ramsey Theorem $\crttt$. This says that for every $2$-colouring $c$ of pairs of natural numbers, there is an infinite set $S$ on which $c$ is \emph{stable}, that is, for each $x \in S$, either $c(x,y) = 0$ for all sufficiently large $y \in S$ 
or $c(x,y) = 1$ for all sufficiently large $y \in S$. Thus, we define the following principles.  

\begin{description}[rightmargin=10mm]
\item[$\crttt$] \itshape  For every $c \colon [\Nb]^2\to 2$ there exists an unbounded set $S \subseteq \Nb$ such that for each $x \in S$ there exists $y \in S$ such that $c(x,z) = c(x,y)$ holds for all $z \in S$ with $z \ge y$.

\item[$\lcrttt$] \itshape  For every $c \colon [\Nb]^2\to 2$ there exists a set $S \subseteq \Nb$ of cardinality $\Nb$
such that for each $x \in S$ there exists $y \in S$ such that $c(x,z) = c(x,y)$ holds for all $z \in S$ with $z \ge y$.
\end{description}

We also recall some principles that are not the main focus of this work but will be mentioned in one or more contexts.

Stable Ramsey's Theorem $\srttt$ is $\rttt$ restricted to colourings $c$ that are stable on $\Nb$. 

A colouring $c \colon [A]^2 \to n$
is \emph{transitive} if $c(x,y) = c(y,z) = i$ implies $c(x,z) = i$ for all $i < n$ and all $x < y < z$ elements of $A$. The colouring $c$ is \emph{semitransitive} if the above implication holds for all $i < n$ except at most one.
The Erd\"os-Moser principle $\emo$ says that 
for any $c \colon [\Nb]^2 \to 2$, there is an infinite set $A \subseteq \Nb$ on which $c$
is transitive. 

Over $\rcas$, both $\srttt$ and $\emo$ have normal and long versions, which are defined in the natural way. $\rcas$ is able to prove the well-known equivalences of $\rttt$ with
$\srttt \land \crttt$ and with $\emo \land \ads$.

The cohesive principle $\coh$ is recalled and studied in Section \ref{sec:coh}.

\section{Normal principles} \label{Sec_Slim}

Hirschfeldt and Shore \cite{hirschfeldtShore} proved that the sequence of implications $\rt{2}{2} \imp \cac \imp \ads \imp \crttt$ holds over $\rca$. Moreover, they showed that the first and third implication do not in general reverse over $\rca$. The strictness of the implication from $\cac$ to $\ads$ was shown in \cite{Lerman2013Separating}.

It is easy to check that the proofs of the implications from $\rt{2}{2}$ to $\cac$ and $\crttt$, and of the one from $\cac$ to $\ads$, do not require $\iso$. We can thus state the following lemma (see \ref{lRT_lCAC_lADS} for its \lq\lq long\rq\rq\ counterpart).

\begin{lemma}\label{lem:easy-implications}
Over $\rcas$, the following sequences of implications hold: 
\begin{align*}
\rt{2}{2} \imp \cac \imp \ads, \\
\rt{2}{2} \imp \crttt.
\end{align*}
None of the implications can be provably reversed in $\rcas$.
\end{lemma}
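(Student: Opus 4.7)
My plan is to handle the implications and the non-reversals separately. For the implications, the strategy is to take the standard $\rca$-proofs and inspect them to confirm that they go through in $\rcas$. The only non-trivial resources used by those proofs are (i) the formation of auxiliary $\Delta^0_1$-definable colourings or orders (available via $\Delta^0_1$-comprehension, which is part of $\rcas$) and (ii) purely logical manipulation of the hypothesis that the principle supplies an unbounded set. No step appeals to $\Sigma^0_1$-induction, unbounded search, or the existence of ranges of total functions, so everything survives the weakening of the base theory.

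Concretely, for $\rt{2}{2} \imp \cac$, given a partial order $(\Nb, \preceq)$ I would define $c \colon [\Nb]^2 \to 2$ by $c(x,y) = 0$ iff $x \preceq y \lor y \preceq x$. This is $\Delta_1(\preceq)$, hence a set, and applying $\rt{2}{2}$ produces an unbounded $H$ with $c \restric [H]^2$ constant; by the definition of $c$, such an $H$ is either a chain or an antichain. For $\cac \imp \ads$, given a linear order $(\Nb,\preceq)$ I would form the partial order $\preceq'$ with $x \preceq' y$ iff $x \le y \land x \preceq y$, again via $\Delta^0_1$-comprehension. A short check shows that a chain in $\preceq'$ is a set on which $\le$ and $\preceq$ coincide, while an antichain is a set on which they are reversed; thus $\cac$ applied to $\preceq'$ furnishes a normal $\ads$-solution. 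For $\rt{2}{2} \imp \crttt$ I would just note that a homogeneous set is trivially stable: if $c$ is constant on $[H]^2$ with value $i$, then for any $x \in H$ one may take $y$ to be any element of $H$ above $x$, and then $c(x,z) = i = c(x,y)$ for every $z \in H$ with $z \ge y$.

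For the non-reversals, my plan is to quote the known separations over $\rca$ and observe that they transfer downward. Hirschfeldt and Shore \cite{hirschfeldtShore} showed $\cac \not\imp \rt{2}{2}$ over $\rca$, and Lerman, Solomon and Towsner \cite{Lerman2013Separating} showed $\ads \not\imp \cac$ over $\rca$. For the third implication, if $\crttt \imp \rt{2}{2}$ held over $\rcas$, then composing with $\cac \imp \ads \imp \crttt$ we would obtain $\cac \imp \rt{2}{2}$ over $\rcas$, hence over $\rca$, contradicting Hirschfeldt--Shore. Since $\rca$ extends $\rcas$, any $\rcas$-proof of a reversal would be an $\rca$-proof, so each known $\rca$-level non-implication yields the corresponding $\rcas$-level non-reversal.

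The only real point of caution, and the place where an error could creep in, is the claim that the implications use nothing beyond $\Delta^0_1$-comprehension and $\Delta^0_1$-induction. This is mostly a matter of vigilance about two things: that the auxiliary objects (the colouring $c$ for $\rt{2}{2} \imp \cac$ and the order $\preceq'$ for $\cac \imp \ads$) really are $\Delta^0_1$ and not covertly $\Sigma^0_1$, and that the notion of ``unbounded solution'' is preserved under the set-theoretic operations used (e.g.\ the image of $H$ under no nontrivial function needs to be formed here, so preservation is immediate). Once this bookkeeping is done, both parts of the lemma are routine.
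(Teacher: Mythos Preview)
Your argument for the implications is correct and matches the paper's approach (the paper simply remarks that the standard $\rca$-proofs go through without $\iso$). The non-reversals $\cac \not\imp \rttt$ and $\ads \not\imp \cac$ are handled correctly by the monotonicity observation in your last paragraph.

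There is a gap in your treatment of the third non-reversal. You argue: if $\crttt \imp \rttt$ held over $\rcas$, then composing with $\cac \imp \ads \imp \crttt$ would give $\cac \imp \rttt$ over $\rcas$. But the implication $\ads \imp \crttt$ (or even $\cac \imp \crttt$) over $\rcas$ is precisely what the paper leaves open in \cref{q:cac-implies-crt}; you cannot use it here. The fix is immediate and already implicit in your own final sentence: if $\rcas \vdash \crttt \imp \rttt$ then $\rca \vdash \crttt \imp \rttt$, and now do the composition over $\rca$, where $\ads \imp \crttt$ is known. Alternatively, just cite directly that Hirschfeldt--Shore built a model of $\rca + \crttt + \neg \ads$ (hence $\neg \rttt$), which transfers downward by your general principle without any detour.
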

Two issues left open by the lemma are whether $\ads$ or at least $\cac$ implies $\crttt$ over $\rcas$, and whether the implications above are still strict over $\rcas + \neg \iso$. 
It will be shown in \cref{separation-over-rcas} that $\rt{2}{2}$, $\cac$, $\ads$, and $\crttt$ do in fact remain pairwise distinct over $\rcas + \neg \iso$, and moreover, that they have pairwise distinct sets of arithmetical consequences. Interestingly, this is related to the fact that the principles are known to be distinct over $\wkl$. %\todo{made some reformulations in this paragraph and added last sentence}

On the other hand, we were not able to determine whether $\rcas$ proves $\cac \imp \crttt$. This question may be related to the problem whether $\crttt$ is weaker than $\rt{2}{2}$ in a specific technical sense discussed in 
Section \ref{Subsec_closure}.

\subsection{Basic observations}\label{Subsec_instances}

In this subsection, we verify that some well-known and useful properties
of the Ramsey-theoretic principles we consider still hold
over $\rcas$. First, we show that no generality is lost
by restricting the principles to instances defined on all of $\Nb$ rather than on a more general infinite set.

%We consider another possible version of some of the statements studied in this paper, in which the instances themselves of the principles are admitted to be unbounded, but not of universe cardinality. In fact, all instances of the principles considered so far, are either colouring or orders of the universe $\Nb$. We prove that if we enlarge the possible instances admitting also colouring or orders indexed by a proper cut of $\Nb$, the strength of the related principle does not change.

\begin{lemma}\label{W_ADSslimInstance}
Over $\rcas$, each of $\rt{n}{k}$, $\cac$, $\ads$, $\crttt$ is equivalent to its generalization to orderings/colourings defined on an arbitrary unbounded subset of $\Nb$.% the following statement: for each $A \cof \Nb$ and  for each linear order $(A,\prec)$ there exists an unbounded set $S \subseteq \Nb$ such that either for all $x, y\in S$ it holds that $x<y$ iff $x\prec y$ or for all  $x, y\in S$  it holds that $x<y$ iff $x\succ y$.
\end{lemma}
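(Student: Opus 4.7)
The direction from each generalized principle to its restriction to $\Nb$ is immediate, since $\Nb$ is itself unbounded in $\Nb$. For the converse, the plan is to give a uniform construction. Given an instance defined on an unbounded set $A \cof \Nb$, define the \emph{ceiling map} $\pi \colon \Nb \to A$ by $\pi(x) = \min\{a \in A : a \ge x\}$. This is $\Delta^0_1$-definable from $A$ and total by unboundedness of $A$; it is non-decreasing and satisfies $\pi(x) \ge x$, and therefore exists as a function in any model of $\rcas$.

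Using $\pi$ I would push an instance on $A$ forward to an instance on $\Nb$ in a principle-specific but uniform way. A colouring $c \colon [A]^n \to k$ becomes $\hat c(x_1 < \ldots < x_n) = c(\pi(x_1), \ldots, \pi(x_n))$ when the $\pi(x_i)$ are pairwise distinct and $0$ otherwise; this covers $\rt{n}{k}$ and $\crttt$. A partial order $\preceq$ on $A$ (the $\cac$ case) becomes $x \mathrel{\hat\preceq} y$ iff $x = y$ or $\pi(x) \prec \pi(y)$. A linear order $\preceq$ on $A$ (the $\ads$ case) becomes $x \mathrel{\hat\preceq} y$ iff $\pi(x) \prec \pi(y)$, or $\pi(x) = \pi(y)$ and $x \le y$. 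Verifying that each extension is a valid instance of its type and exists in the model by $\Delta^0_1$-comprehension should be a routine check.

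Next, apply the principle on $\Nb$ to obtain an unbounded solution $H$ to the extended instance, and thin it out to
\[T = \{x \in H : \forall y \in H\, (y < x \to \pi(y) < \pi(x))\},\]
the set of smallest elements of $H$ in each non-empty $\pi$-fibre. Then $T$ is a set by $\Delta^0_1$-comprehension, $\pi \restric T$ is strictly increasing, and $\pi(T) = \pi(H)$ because every element of $\pi(H)$ has its smallest $H$-preimage in $T$. The candidate solution on $A$ will be $\pi(T) \subseteq A$: strict monotonicity of $\pi \restric T$ ensures that the extended structure on $T$ is the pullback along $\pi$ of the original structure on $\pi(T)$, so a chain, antichain, homogeneous, or stable set for the former transfers directly to one for the latter. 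For $\ads$ one should handle both the $\le$-agreeing and $\le$-disagreeing cases; in the disagreeing case $\pi \restric H$ is automatically strictly increasing, so no thinning is needed and $T = H$.

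The hard part will be showing that $T$, and hence $\pi(T)$, is unbounded, since in $\rcas$ unboundedness need not be preserved by arbitrary operations on sets. The key observation should be that $\pi(H)$ is unbounded, because $\pi(x) \ge x$ and $H$ is unbounded; if $T$ were bounded by some $N$, then $\pi(T)$ would be contained in $[0, \pi(N)]$ and hence also bounded, contradicting $\pi(T) = \pi(H)$. Once this is secured, the rest of the argument is a direct verification in $\rcas$.
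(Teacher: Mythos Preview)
Your proposal is correct and takes essentially the same approach as the paper: your ceiling map $\pi(x) = \min\{a \in A : a \ge x\}$ is exactly the paper's device of sending $x \in (a_{i-1},a_i]$ to $a_i$, and your extended instances $\hat c$, $\hat\preceq$ coincide with the paper's $c'$, $\preceq'$. The only difference is presentational: you treat all four principles uniformly and introduce an explicit thinning set $T$, whereas the paper sketches $\ads$ and $\crttt$ and pulls back directly via $\pi(S')$ without thinning (which also works, since $\pi$ is non-decreasing).
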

\begin{proof}
For $\rt{n}{k}$, this is implicit in \cite{kky:ramsey-rca0star}. The proofs are similar for all principles; we sketch them for $\ads$ and $\crttt$.

Working in $\rcas$, assume $\ads$ and let $(A,\preceq)$
be a linear order, where $A \cof \Nb$. Thus, 
$A= \{a_i \mid i \in I\}$, for some $\Sigma^0_1$-cut $I$ in $\Nb$.

Define a linear order $\preceq'$ on $\Nb$ by 
\[x \preceq' y \Biimp \exists i,j \in I \, (x \in (a_{i-1},a_{i}] \land y \in (a_{j-1},a_{j}] \land ( (i \ne j \land a_i \prec a_j) \lor (i=j \land x \le y)))
\]
That is, elements are $\preceq'$-ordered according to the the $\preceq$-ordering between the nearest elements of $A$ above them, if that makes sense, and according to the usual natural number ordering otherwise. Since $\preceq'$ is $\Delta_1(A, \preceq)$-definable, it exists as a set. Let $S' \cof \Nb$ be a strictly increasing or strictly decreasing sequence in $\preceq'$. %If $S'$ is not already an unbounded subset of $A$ then
%\todo{\kasia{I added this because it may happen that $S'$ is already a solution to $(A, \preceq)$ and, for instance, every second element of $A$ belongs to $S'$. Then $S$ defined this way is empty (or a singleton).}}
%\todo{there was a stupid problem here noticed by Kasia, I think the best solution is to replace $<$ by $\le$ in the definition}
Using $\Delta_1(S',A)$-comprehension, define $S \subseteq A$ by: 
\[a \in S \Biimp a \in A \land \exists x \! \le \! a \, (x \in S' \land [x,a) \cap A = \emptyset).\] It is easy to check that $S$ is unbounded and it is either a strictly increasing or a strictly decreasing sequence in $\preceq$.  

For $\crttt$, given $c \colon [A]^2 \to 2$,
use $\Delta^0_1$-comprehension to define $c' \colon [\Nb]^2 \to 2$ by:
\[c'(x, y)=
\begin{cases}
c(a_{i}, a_{j}) & \text{ if } \exists i,j \in I \, \left(i \neq j  \land x \in (a_{i-1},a_{i}] \land y \in (a_{j-1},a_{j}]\right),  \\%[1em]
0 & \text{ otherwise.}
\end{cases}\] 
If $S' \cof \Nb$ is such that $c'$ is stable on $S'$, then it is easy to define analogously as above $S \cof A$
on which $c$ is stable by $\Delta_1(S',A)$-comprehension.
\end{proof}

We now check that in $\rcas$, it is still true that 
$\ads$ and $\cac$ can be viewed as the restrictions of $\rt{2}{2}$ to transitive and semitransitive colourings,
respectively. 

\begin{proposition}\label{prop:ads-trans}
Over $\rcas$, $\cac$ and $\ads$ are equivalent to $\rt{2}{2}$ restricted to semitransitive 2-colourings and to transitive 2-colourings, respectively.
\end{proposition}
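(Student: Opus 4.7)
The plan is to check that the familiar proofs of these equivalences, known over $\rca$, go through without additional difficulty over $\rcas$. None of the colourings or orders constructed below is defined by anything worse than a $\Delta^0_1$ condition in the given data, so the constructions exist as sets in $\rcas$, and the case analyses verifying transitivity or partial-orderedness use only the transitivity hypotheses and no induction beyond what is provable in $\rcas$.

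For the equivalence of $\ads$ with transitive $\rttt$, given a transitive $c \colon [\Nb]^2 \to 2$, set $x \prec y$ iff either $x < y \land c(x,y) = 0$ or $y < x \land c(y,x) = 1$. A case analysis on the six possible $\le$-orderings of three distinct points, invoking transitivity of colour $0$ or of colour $1$ exactly once per case, shows that $\prec$ is a strict linear order; extending with equality yields a linear order on $\Nb$ to which $\ads$ applies, and an ascending (resp.\ descending) set solution is $0$-monochromatic (resp.\ $1$-monochromatic) for $c$. Conversely, from a linear order $\preceq$ on $\Nb$, the colouring $c(x,y) = 0 \Leftrightarrow x \preceq y$ (for $x < y$) has both colour classes transitive, and its monochromatic solutions are exactly ascending and descending sequences.

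For the equivalence of $\cac$ with semitransitive $\rttt$, given a semitransitive $c$ assume by symmetry that colour $0$ is transitive. The relation $x \preceq y \Leftrightarrow x = y \lor (x < y \land c(x,y) = 0)$ is a partial order on $\Nb$ whose chains and antichains are exactly the $0$- and $1$-monochromatic sets for $c$, so a single application of $\cac$ yields the required monochromatic set. In the other direction, given a partial order $\preceq$, the colouring $c(x,y) = 0 \Leftrightarrow x \preceq y$ (for $x < y$) has colour $0$ transitive and is therefore semitransitive; its $0$-monochromatic solutions are chains in $\preceq$, but a $1$-monochromatic solution $S$ only guarantees $x \not\preceq y$ for $x < y$ in $S$, which is strictly weaker than being an antichain.

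The one mildly delicate step, and the main obstacle to a purely one-shot argument, is handling this $1$-monochromatic case, which we resolve by applying semitransitive $\rttt$ a second time: on $S$ define $c'(x,y) = 0 \Leftrightarrow y \preceq x$ (for $x < y$), so colour $0$ of $c'$ is transitive by transitivity of $\preceq$. By Lemma~\ref{W_ADSslimInstance} we may treat this as a full instance of the principle. A $0$-monochromatic solution $S' \subseteq S$ for $c'$ is a descending chain in $\preceq$; a $1$-monochromatic solution $S' \subseteq S$ satisfies both $x \not\preceq y$ (from the original $1$-monochromaticity of $S$ for $c$) and $y \not\preceq x$ (from $S'$ being $1$-monochromatic for $c'$) for all $x < y$ in $S'$, and is therefore an antichain in $\preceq$.
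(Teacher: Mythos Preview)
Your proof is correct. For $\cac$, your two-step application of semitransitive $\rttt$ is essentially a streamlined version of the paper's route via semitransitive $\rt{2}{3}$; both arguments iterate the principle and both need Lemma~\ref{W_ADSslimInstance} to handle the second application on an unbounded subset.

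For $\ads$, however, your approach differs from the paper's. The paper follows the Hirschfeldt--Shore insertion construction: numbers are placed into the order one at a time, and one then checks by induction on $n$ that $\preceq$ agrees with $c$ on $\{0,\ldots,n\}$. The point the paper has to make is that, although \cite{hirschfeldtShore} invokes $\iso$ for this induction, the formula is in fact bounded, so the argument survives in $\rcas$. You instead define $\prec$ directly from $c$ and verify transitivity by the six-case analysis; this avoids any inductive verification and hence sidesteps the $\iso$ issue entirely. Your route is shorter and more self-contained here, while the paper's has the advantage of making explicit exactly where the original literature overstates the induction needed.
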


\begin{proof}
This is just a verification that the arguments 
of \cite{hirschfeldtShore} go through in $\rcas$.

The implication from $\cac$ to $\rt{2}{2}$ for semitransitive 2-colourings is unproblematic. In the other direction, $\cac$
follows easily from $\rt{2}{3}$ for semitransitive 3-colourings,
which is in turn derived from $\rt{2}{2}$ for semitransitive 2-colourings. In the reduction from 3-colourings to 2-colourings, at one point we have to obtain an unbounded homogeneous set for a semitransitive 2-colouring defined on an unbounded subset of $\Nb$ rather than on $\Nb$. This is dealt with like in the proof of Lemma \ref{W_ADSslimInstance}.

The implication from $\rt{2}{2}$ for transitive 2-colourings
to $\ads$ is immediate. The other direction is \cite{hirschfeldtShore}*{Theorem 5.3}, which requires a 
comment. Given a transitive colouring $c \colon [\Nb]^2 \to 2$, we build a linear order $\preceq$ by inserting numbers $0,1,\ldots$ into it one-by-one. When $\preceq$ is already defined on $\{0,\ldots, n-1\}$, we insert $n$ into the order directly above the $\preceq$-largest $k<n$ such that
$c(k,n) = 0$; if there is no such $k$, we place $n$ at the bottom of $\preceq$. Then, we can check by induction on $n$
that the ordering $\preceq$ agrees with $c$ on $\{0,\ldots, n\}$ in the sense that for $i < j  \le  n$, we have $i \prec j$ iff $c(i,j) = 0$. In \cite{hirschfeldtShore}, $\iso$ is invoked for this purpose, but it will be clear from the above description that the induction formula is actually bounded. The induction step uses the transitivity of $c$.
%Then, it is checked that for each $x < y$, $c(x,y)=0$ if and only if $x \prec y$, that is ascending chains corresponds to homogeneous sets for colour $0$, while descending chain are homogeneous for $1$. They prove the former equivalence by contradiction, invoking $\iso$ to choose the smallest $x$ such that there exists $y > x$ such that $y \prec x$ and $c(x,y)=0$. It is easy to notice that the proof never exploits the fact that $x$ is the smallest with such property, thus $x$ can be arbitrarily fixed, once the existence of a pair $x,y$ of elements falsifying the claim is assumed to exists, and the use of $\iso$ is thus avoided. 
\end{proof}

\subsection{Between models and cuts}\label{Subsec_cuts}
In \cite{kky:ramsey-rca0star}, it is shown that $\rt{n}{k}$ displays interesting behaviour in models of $\rcas + \neg \iso$: if $I$ is a proper $\Sigma^0_1$-cut in a model 
$(M, \mc{X})$, then $\rt{n}{k}$ holds in the entire model
$(M, \mc{X})$ if and only if it holds on the cut, that is in the structure $(I, \codmi)$. This equivalence provides important information about the first-order consequences
of $\rt{n}{k}$ over $\rcas$. It is apparent from the proof of the equivalence that it is not highly specific to $\rt{n}{k}$ and should hold for many other Ramsey-theoretic statements.

In \cref{W_Cut} below, we identify a relatively broad syntactic class of sentences that all share the property of being equivalent to their own relativizations to $\Sigma^0_1$-cuts. We then verify that Ramsey-theoretic statements such as $\rt{n}{k}$, $\cac$, $\ads$, and $\crttt$ are equivalent to sentences from that class. It follows, for instance, that all these statements
fail to be $\Pi^1_1$-conservative over $\rcas$, and that they differ in their arithmetical consequences.

%A key feature of the slim version of the principles we are considering is the fact that they can be transferred from a model to its cuts and backwards, as make precise in \cref{W_Cut}. This theorem is actually a generalisation of Theorem 3 of \cite{kky:ramsey-rca0star}, where $\rt{n}{k}$, for $n,k\in \omega$, is observed to hold in a model of $\rcas$ if and only if it holds on its $\Sigma^0_1$-cuts. This result, far from being a curiosity, allows to draw a number of consequences about the first order part of Ramsey's theorem, as witnessed by \cite{kky:ramsey-rca0star}, and some (non) conservativity results, as underlined towards the end of this section.

\begin{definition}
The $\mathcal{L}_2$-sentence $\chi$
belongs to the class of sentences $\pso$ if there exists a sentence $\gamma$ of second-order logic in a language $(\le, R_1, \dots, R_k)$, where $k \in \omega$ and each $R_i$ is a relation symbol of arity $m_i \in \omega$, such that $\chi$ expresses:
\begin{quotation}
for any relations $R_1, \ldots,R_k$ on $\Nb$ 
and for each $D \cof \Nb$, \\*
there exists $H \cof D$ such that $(H, \le, R_1, \ldots,R_k) \vDash \gamma$.
\end{quotation}
\end{definition}
In the definition above, we slightly abuse notation by writing $(H, \le, R_1, \ldots,R_k)$ instead of the more cumbersome
$(H, {\le} \cap H^2, R_1\cap H^{m_1},\ldots, R_k\cap H^{m_k})$. The fact that this structure satisfies $\gamma$ is expressed by relativizing each first-order quantifier in $\gamma$ to $H$
and restricting each $m$-ary second-order quantifier to $m$-ary relations on $H$. Of course, when this is interpreted in a model of arithmetic $(M,\mc{X})$, \lq\lq $m$-ary relations
on $H$\rq\rq\  are understood as elements 
of $\mc{X} \cap \mc{P}(H^m)$.

The abbreviation pSO stands for \lq\lq pseudo--second-order\rq\rq:
pSO sentences appear to use both first- and second-order quantification of $\mathcal{L}_2$, 
but they are relativized to arbitrarily small unbounded subsets of $\Nb$ in such a way that in cases where $\iso$ fails their behaviour is closer to that of first-order sentences; cf.~\cref{Computably_true}.

\begin{theorem} \label{W_Cut}
If $\chi$ is a $\pso$ sentence, 
then for every $(M,\mc{X}) \vDash \rcas$ and every proper $\Sigma^0_1$-cut $I$ in $(M,\mc{X})$,
it holds that $(M,\mc{X}) \vDash \chi$ if and only if $(I,\codmi) \vDash \chi$.
\end{theorem}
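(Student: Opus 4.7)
The plan is to prove both directions of the equivalence by transporting structure back and forth along an enumeration $A = \{a_i \mid i \in I\}$ of a cofinal subset of $M$ indexed by $I$, as furnished by \cref{cofinalset}. The map $i \mapsto a_i$ will serve as a first-order isomorphism between an appropriate substructure on $I$ and the corresponding substructure on $A$, while \cref{ChongMourad} will take care of the second-order universes, ensuring that this bijection extends to a correspondence between the $\codmi$-sets on the $I$ side and the $\mc{X}$-sets on the $M$ side.

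For the direction $(M,\mc{X}) \vDash \chi \Rightarrow (I,\codmi) \vDash \chi$, suppose we are given relations $R_1^I, \ldots, R_k^I \in \codmi$ on $I$ and a set $D^I \cof I$ in $\codmi$. I will first fix $A = \{a_i \mid i \in I\} \cof M$ from \cref{cofinalset}. Each $R_j^I$ can then be transported to an $m_j$-ary relation $R_j^M$ on $M$ by the rule
\[(a_{i_1}, \ldots, a_{i_{m_j}}) \in R_j^M \iff (i_1, \ldots, i_{m_j}) \in R_j^I,\]
with $R_j^M$ empty on tuples outside $A^{m_j}$; using a code $s_j \in M$ for $R_j^I$, the resulting set is $\Delta_1(A, s_j)$-definable, hence in $\mc{X}$. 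Similarly, $D^M = \{a_i \mid i \in D^I\}$ lies in $\mc{X}$ and is cofinal in $M$ because $D^I$ is cofinal in $I$ and the enumeration is strictly increasing. Applying $\chi$ in $(M,\mc{X})$ to this data produces $H \cof D^M$ with $(H, \le, R_1^M, \ldots, R_k^M) \vDash \gamma$; pulling back, the set $H^I = \{i \in I \mid a_i \in H\}$ belongs to $\codmi$ by \cref{ChongMourad} (both $H^I$ and $I \setminus H^I$ being $\Sigma^0_1$-definable) and is easily seen to be cofinal in $D^I$.

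The reverse direction proceeds symmetrically. Starting from $R_1, \ldots, R_k \in \mc{X}$ on $M$ and $D \cof M$ in $\mc{X}$, I will build $A = \{a_i \mid i \in I\} \cof M$ with the additional property $A \subseteq D$, which can be arranged by running the construction from \cref{cofinalset} inside $D$ using $\Delta^0_1(D)$-comprehension. Dually to the forward direction, each $R_j$ pulls back to a relation $R_j^I \in \codmi$ on $I$, and an application of $\chi$ in $(I, \codmi)$ with $D^I = I$ yields $H^I \cof I$ in $\codmi$ satisfying $\gamma$. Its image $H = \{a_i \mid i \in H^I\}$ then lies in $\mc{X}$, is contained in $D$, and is cofinal in $M$, so $H \cof D$.

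The main obstacle in both directions is that $\gamma$ is a genuinely second-order sentence, so the first-order isomorphism $i \leftrightarrow a_i$ by itself does not suffice to transfer satisfaction. What really needs to be established is that the bijection induces, for each arity $m$, a correspondence between $\codmi \cap \mc{P}((H^I)^m)$ and $\mc{X} \cap \mc{P}(H^m)$. In one direction, the image of a $\codmi$-set on $H^I$ is $\Delta_1$-definable from parameters already in $\mc{X}$ and so lies in $\mc{X}$; in the other, the pullback of an $\mc{X}$-set on $H$ is $\Sigma^0_1$ with $\Sigma^0_1$ complement in $I^m$ and hence in $\codmi$ by \cref{ChongMourad}. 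Once this two-way correspondence of second-order universes is in place, a routine induction on the structure of $\gamma$ transfers satisfaction in either direction and completes the proof.
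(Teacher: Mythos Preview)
Your forward direction is correct and essentially coincides with the paper's argument: push the data along $i \mapsto a_i$, apply $\chi$ in $(M,\mc{X})$, pull the solution back, and use \cref{ChongMourad} to match up the second-order universes.

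The backward direction, however, has a genuine gap. You claim that for each cofinal $D \in \mc{X}$ you can build $A = \{a_i \mid i \in I\} \subseteq D$ cofinal in $M$ and enumerated by the \emph{given} cut $I$. This is not possible in general. The set $D$ is itself enumerated in increasing order by some $\Sigma^0_1$-cut $J$, and nothing prevents $J \subsetneq I$. In that case every subset of $D$ is enumerated by a sub-cut of $J$, so no cofinal $A \subseteq D$ can be indexed by $I$. (Concretely, a strictly increasing $I$-indexed sequence inside $D$ would yield, for each $i \in I$, a finite subset of $D$ of cardinality $i$; this fails once $i \in I \setminus J$.) The phrase ``running the construction from \cref{cofinalset} inside $D$'' does not help: that construction produces a set of coded tuples of witnesses for the $\Sigma^0_1$ definition of $I$, and there is no reason those should lie in $D$.

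The paper avoids this by keeping one fixed $A = \{a_i \mid i \in I\}$ (not contained in $D$) and using an \emph{interval} rather than an index correspondence. One first thins $D$ so that each block $(a_{i-1},a_i]$ meets $D$ in at most one point, then transports $R$ and $D$ to $I$ via $i \in R' \Leftrightarrow (a_{i-1},a_i] \cap R \neq \emptyset$ and similarly for $D'$. The resulting $D' \subseteq I$ is cofinal in $I$ regardless of how sparse $D$ is, because cofinality of $D$ in $M$ guarantees that infinitely many blocks are hit. After applying $\chi$ on the cut, the solution $H'$ is pushed forward to $H = \{x \in D \mid \exists i \in H'\,(x \in (a_{i-1},a_i])\}$, and the isomorphism of second-order structures is verified as you outlined. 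Your treatment of the second-order part (using \cref{ChongMourad} in one direction and $\Delta^0_1$-comprehension in the other) is the right idea and carries over unchanged to this corrected transfer.
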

\begin{proof}
Let $\gamma$ be a second-order sentence and for notational simplicity, assume that it contains only one unary relation symbol $R$ in addition to $\le$, and that all second-order quantifiers are unary. Let
$\chi$ be a pSO sentence stating that for every set $R$ and every unbounded set $D$ there exists an unbounded subset $H\cof D$ such that $(H,\le,R) \vDash \gamma$.  Let $(M, \mathcal{X})\vDash\rcas + \neg\iso$, and let  $A\in\mathcal{X}$ be a cofinal subset of $M$ 
enumerated by the cut $I$, $A=\{a_i \mid i\in I\}$, as in \cref{cofinalset}. 

Suppose first that  $(M,\mc{X}) \vDash \chi$. Let $R, D \in \codmi$ be such that $D \cof I$. 
Define $R', D' \subseteq M$ by:
\begin{align*}
x \in R' & \Biimp  \exists i \in I \,(x = a_i \land i \in R),\\ 
x \in D' & \Biimp  \exists i \in I \,(x = a_i \land i \in D).  \end{align*}
Since both $R'$ and $M \setminus R'$ are $\Sigma_1$-definable in $A$ and (the code for) $R$, we know that $R' \in \mc{X}$. Similarly, $D' \in \mc{X}$. Notice that $D'\cof M$, since $D \cof I$ and $A \cof M$. 
%Since $M\setminus D'= \{x \in M \mid \exists i \in I \, (x \in [a_i,a_{i+1})\land (x \ne a_i \lor i \notin D))\}$, $D'$ is $\Delta^0_1$-definable in $A$ and (the code for) $D$ and so $D' \in \mc{X}$.

By our assumption that $(M,\mc{X}) \vDash \chi$, there exists $H' \in \mc{X}$  such that  $H'\cof D'$ and $(H',\le, R') \vDash \gamma$. Let $H =\{ i \in I \mid a_i \in H' \}$. 
Notice that both $H$ and $I \setminus H$ are $\Sigma_1$-definable in $H'$ and $A$,
%(in fact, $i \in I \setminus H \Biimp \exists j \in I\, \exists x \in H' \, (j > i \land x=a_j \land \forall y < x\, (y \in H' \imp y \ne a_i))$) , 
so $H \in \codmi$ by \cref{ChongMourad}. Moreover, $H \cof D$.
%; in fact, if $i \in D$, then let $a_i \in D'$, and $a_j \in H'$ be such that $i < j$ (such $a_j$ exists because $H' \cof D'$). Then $j \in H$ by definition of $H$.

To show that $(H, \le, R) \vDash \gamma$, we show that
the map $H' \ni a_i \mapsto i \in H$ induces an isomorphism
of the structures $(H', \le, R'; \mathcal{X} \cap \mc{P}(H'))$
and $(H, \le, R; \codmi \cap \mc{P}(H'))$. The fact that this map is an isomorphism between $(H', \le, R')$
and $(H, \le, R)$ follows directly from the definitions. Thus,
we only need to argue that this map also induces an isomorphism of the second-order structures $\mathcal{X} \cap \mc{P}(H')$ and $\codmi \cap \mc{P}(H')$. If $X' \in \mc{X}$
is a subset of $H'$, then $\{i \in I \mid a_i \in X'\}$
is in $\codmi$ by \cref{ChongMourad}. If $X',Y' \in \mc{X}$
are distinct subsets of $H'$, then $\{i \in I \mid a_i \in X'\}$ and $\{i \in I \mid a_i \in Y'\}$ are clearly distinct.
Finally, if $X \in \codmi$ is a subset of $H$, then
$X' = \{a_i \mid i \in X\}$ is in $\mc{X}$ by $\Delta^0_1$-comprehension, and it is a subset of $H'$.

Now suppose that $(I,\codmi) \vDash \chi$. Let $R, D \in \mc{X}$ be such that $D \cof M$.
By replacing $D$ with an appropriate unbounded subset
if necessary, we may assume w.l.o.g.~that 
$D \cap (a_{i-1},a_i]$
has at most one element for each $i \in I$.
We now transfer $R, D$ to $R',D' \subseteq I$ defined as follows:
\begin{align*}
i \in R' & \Biimp \exists x \in (a_{i-1},a_i] \cap R,\\       
i \in D' & \Biimp \exists x \in (a_{i-1},a_i] \cap D.
\end{align*}
% \[(i_1, \dots, i_n) \in R' \Biimp \exists x_1, \dots, x_n < a_{i_{n+1}}\, ((x_1, \dots, x_n) \in R \land \forall j \le n (x_j \in [a_{i_j},a_{i_{j+1}})))\]
% Let $D' = \{ i \in I \mid \exists x < a_{i+1}\,(x \in [a_i, a_{i+1})\cap D)\}$. 
By \cref{ChongMourad},  $R', D' \in \codmi$. 
Notice that $D' \cof I$, given that $D \cof M$.

Since $(I,\codmi) \vDash \chi$, there exists $H' \cof D'$
such that $(H',\le, R')\vDash \gamma$. Define 
\[H = \{x \in D \mid  \exists i \in H' \, (x \in (a_{i-1},a_{i}])\}.\]
Clearly $H \in \mc{X}$ and $H\cof D$.
%, because if $x \in D$ and $x \in [a_i,a_{i+1})$, for some $i \in I$, then $i \in D'$ and there exists $j \in H'$ such that $j>i$. Then let $y$ be in $[a_j,a_{j+1}) \cap D$ (notice that such $y \in D$ exists, because $j \in D'$ and by definition of $D'$). Since the enumeration of $A$ is increasing,  $x < y$ and $y \in H$, by definition of $H$. 
To show that $(H,\le,R) \vDash \gamma$, it remains to prove that the structures $(H',\le,R';\codmi \cap \mc{P}(H'))$ 
and $(H,\le,R; \mc{X} \cap \mc{P}(H))$ 
are isomorphic. The isomorphism is induced by the map
that takes $i \in H'$ to the unique element of
$H \cap (a_{i-1},a_i]$. The verification that this is
indeed an isomorphism is similar to the one in the proof
of the other direction. 
\end{proof}

\begin{corollary}\label{Computably_true}
Let $\chi$ be a $\pso$ sentence 
and let $(M,\mc{X}) \vDash \rcas$. 
If $A \in \mc{X}$ is such that $(M,A) \vDash \neg \is{}{1}(A)$,
then $(M,\mc{X}) \vDash \chi$ if and only if
$(M, \Delta^0_1\text{-}\mathrm{Def}(M,A))\vDash \chi$.
\end{corollary}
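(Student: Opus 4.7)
The plan is to reduce the corollary to two applications of \cref{W_Cut}, one for each second-order universe, using a single cut $I$ witnessed by the parameter $A$.

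First I would extract from the hypothesis $\neg \is{}{1}(A)$ a proper $\Sigma_1(A)$-definable cut $I$ in $M$. This is a standard fact in the first-order theory $\id{}{0} + \exp$ (which is included in $\rcas$): the failure of $\Sigma_1$-induction with parameter $A$ is equivalent to the existence of a proper $\Sigma_1(A)$-definable cut. Since $A \in \mc{X}$, the cut $I$ is a proper $\Sigma^0_1$-cut in $(M,\mc{X})$. Since $A$ is trivially $\Delta_1(A)$-definable, $I$ is also a proper $\Sigma^0_1$-cut in $(M, \Delta^0_1\text{-}\mathrm{Def}(M,A))$.

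Next I would verify that both structures are models of $\rcas$ so that \cref{W_Cut} applies. For $(M, \mc{X})$ this is given. For $(M, \Delta^0_1\text{-}\mathrm{Def}(M,A))$ I would invoke the fact, recorded in the preliminaries, that $(M, \Delta^0_1\text{-}\mathrm{Def}(M,A)) \vDash \rcas$ whenever $(M,A) \vDash \bs{}{1}(A) + \exp$; the latter follows from $(M,\mc{X}) \vDash \rcas$, because $\rcas$ includes $\exp$ and $\bso$, and $A \in \mc{X}$ supplies the parameter needed to instantiate $\bso$ at $A$.

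Now comes the main point, which is essentially bookkeeping: the structure $(I, \codmi)$ depends only on $M$ and $I$, and not on the second-order universe chosen on top of $M$. Applying \cref{W_Cut} in the model $(M,\mc{X})$ to the cut $I$ yields
\[
(M,\mc{X}) \vDash \chi \ \Longleftrightarrow\ (I, \codmi) \vDash \chi,
\]
and applying \cref{W_Cut} in the model $(M, \Delta^0_1\text{-}\mathrm{Def}(M,A))$ to the same cut $I$ yields
\[
(M, \Delta^0_1\text{-}\mathrm{Def}(M,A)) \vDash \chi \ \Longleftrightarrow\ (I, \codmi) \vDash \chi.
\]
Comparing the two equivalences gives the desired biconditional. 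The only step requiring any care is confirming that $I$ qualifies as a proper $\Sigma^0_1$-cut in both models simultaneously, but this is immediate once $A$ is present in both second-order parts; I do not expect a genuine obstacle anywhere in the argument.
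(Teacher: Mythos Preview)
Your proposal is correct and follows exactly the same approach as the paper: apply \cref{W_Cut} once in $(M,\mc{X})$ and once in $(M,\Delta^0_1\text{-}\mathrm{Def}(M,A))$ with respect to the same $\Sigma_1(A)$-definable cut $I$, and observe that the intermediate structure $(I,\codmi)$ depends only on $M$ and $I$. The paper compresses this into a single sentence, but your more detailed write-up unpacks precisely the points that need to be checked.
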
 
\begin{proof}
The right-hand side of the equivalence in \cref{W_Cut}
does not depend on $\mc{X}$ as long 
as a given proper cut $I$
is $\Sigma^0_1$-definable in $(M,A)$.
\end{proof}

\cref{W_Cut} and \cref{Computably_true} make it possible
to prove a very simple criterion of $\Pi^1_1$-conservativity
over $\rcas$ for $\pso$ sentences. We state the criterion 
in slightly greater generality, for boolean combinations
of $\pso$ sentences, so as to be able to conclude that some specific $\pso$ sentences have distinct sets of first-order consequences over $\rcas$.

\begin{theorem}\label{W_Pi11-cons}
Let $\psi$ be a boolean combination of $\pso$ sentences.
Then the following are equivalent:
\begin{enumerate}[(i)]
    \item $\rcas + \psi$ is $\Pi^1_1$-conservative over
    $\rcas$,
    \item $\rcas + \neg\iso \vdash \psi$,
    \item $\wkls \vdash \psi$. 
    \end{enumerate}
Moreover, if $\wkl \not \vdash \psi$,
then $\rcas + \psi$ 
is not arithmetically conservative over $\rcas$.
\end{theorem}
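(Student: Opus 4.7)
My plan is to establish the equivalence of (i), (ii), and (iii) via the implications (iii)~$\Rightarrow$~(ii), (ii)~$\Rightarrow$~(iii), (iii)~$\Rightarrow$~(i), and (i)~$\Rightarrow$~(ii); the ``moreover'' clause I would handle by a separate argument. The easy directions rest on the machinery of Section~2. For (iii)~$\Rightarrow$~(i), since $\wkls$ is known to be $\Pi^1_1$-conservative over $\rcas$ (a Harrington-style result adapted to $\rcas$ in~\cite{SimpsonSmith}), if $\wkls \vdash \psi$ then every $\Pi^1_1$ consequence of $\rcas + \psi$ is a $\Pi^1_1$ consequence of $\wkls$, hence of $\rcas$. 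For (iii)~$\Rightarrow$~(ii), given $(M,\mc{X}) \vDash \rcas + \neg\iso$, I would apply \cref{cofinalset} to obtain a proper $\Sigma^0_1$-cut $I$, refined so as to be closed under $\exp$; then $(I,\codmi) \vDash \wkls$ by \cite{SimpsonSmith}*{Theorem~4.8}, so $(I,\codmi) \vDash \psi$ by (iii), and \cref{W_Cut} applied to each pSO constituent of $\psi$ transfers the truth of $\psi$ to $(M,\mc{X})$. For (ii)~$\Rightarrow$~(iii), take a countable $(M,\mc{X}) \vDash \wkls$: if $(M,\mc{X}) \vDash \neg\iso$, (ii) applies directly; otherwise $(M,\mc{X}) \vDash \wkl$, and \cref{ModelExt} yields an end-extension $(K,\mc{Y}) \vDash \rcas + \neg\iso$ with $M$ a proper $\Sigma^0_1$-cut of $(K,\mc{Y})$ and $\cod(K/M) = \mc{X}$, whence $(K,\mc{Y}) \vDash \psi$ by (ii) and \cref{W_Cut} pulls $\psi$ back to $(M,\mc{X})$.

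The crux is (i)~$\Rightarrow$~(ii). Take a countable $(N,\mc{Y}) \vDash \rcas + \neg\iso$. By the standard model-theoretic reformulation of $\Pi^1_1$-conservativity, (i) provides an $\omega$-extension $(N,\mc{Y}') \vDash \rcas + \psi$ with $\mc{Y} \subseteq \mc{Y}'$. The witness to $\neg\iso$ involves a parameter $A \in \mc{Y}$, which also lies in $\mc{Y}'$, so $(N,\mc{Y}')$ still satisfies $\neg\iso$, and hence $\rcas + \neg\iso + \psi$. The key observation---which makes the argument work---is that $\cod(N/I) = \{I \cap (s)_\ack \mid s \in N\}$ depends only on the first-order part $N$ and the cut $I$, not on the ambient second-order universe. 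Consequently, choosing a proper $\Sigma^0_1$-cut $I$ definable from $A$ (hence a $\Sigma^0_1$-cut in both $(N,\mc{Y})$ and $(N,\mc{Y}')$) and applying \cref{W_Cut} to both structures gives
\[
(N,\mc{Y}) \vDash \psi \;\Longleftrightarrow\; (I,\cod(N/I)) \vDash \psi \;\Longleftrightarrow\; (N,\mc{Y}') \vDash \psi,
\]
and since the rightmost statement holds, so does the leftmost. Completeness then yields $\rcas + \neg\iso \vdash \psi$.

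For the ``moreover'' clause, a countable $(M,\mc{X}) \vDash \wkl + \neg\psi$ exists by hypothesis, and \cref{ModelExt} yields $(K,\mc{Y}) \vDash \rcas + \neg\iso + \neg\psi$ in which $M$ is a $\Sigma_1$-cut of $K$ defined \emph{without set parameters}, so that $K \vDash \neg\is{}{1}$. Failure of $\Pi^1_1$-conservativity is already immediate from the main equivalence, but strengthening this to arithmetical non-conservation is the main obstacle of this part: it requires producing an explicit first-order consequence of $\rcas + \psi$ that fails in the first-order reduct of $K$. I expect the argument to proceed along the lines of the treatment of $\rt{n}{k}$ in~\cite{kky:ramsey-rca0star}, exploiting the mismatch between models of $\rcas + \psi$ and the first-order theory of $K$ to locate an appropriate arithmetical witness---presumably a sentence expressing a modest fragment of induction or totality beyond $\bs{}{1} + \exp$.
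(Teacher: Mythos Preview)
Your treatment of the equivalence (i)--(iii) is correct and close in spirit to the paper's. The paper packages (i)~$\Rightarrow$~(ii) slightly differently: rather than invoking the $\omega$-extension characterization of $\Pi^1_1$-conservativity, it observes that $\rcas + \psi$ proves the $\Pi^1_1$ sentence ``for every $A$, if $\is{}{1}(A)$ fails then $\psi$ holds in the $\Delta_1(A)$-definable sets'', and then uses (i) to transfer this sentence to $\rcas$. Your argument and the paper's both rest on the same key point, namely that $\cod(N/I)$ depends only on $N$ and $I$, so either route is fine. (One quibble: your direct (iii)~$\Rightarrow$~(ii) assumes you can refine an arbitrary proper $\Sigma^0_1$-cut to one closed under $\exp$; this is not obvious in $\rcas$, where $\Sigma^0_1$-cuts need not even be closed under addition. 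But since you also have (iii)~$\Rightarrow$~(i)~$\Rightarrow$~(ii), this implication is redundant and the issue is harmless.)

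The genuine gap is in the ``moreover'' clause. You correctly build $(K,\Delta_1\text{-}\mathrm{Def}(K)) \vDash \rcas + \neg\is{}{1} + \neg\psi$ with $M$ a parameter-free $\Sigma_1$-cut, but then you treat the identification of an arithmetical witness as an open-ended obstacle and guess that it will be some fragment of induction or totality. In fact the witness is already implicit in your own argument for (i)~$\Rightarrow$~(ii): it is the purely first-order sentence
\[
\neg\is{}{1} \;\longrightarrow\; \text{``the $\Delta_1$-definable sets satisfy $\psi$''},
\]
where the quoted part is expressible in first-order arithmetic by replacing second-order quantifiers with quantifiers over indices of computable sets. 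This sentence is provable in $\rcas + \psi$ by exactly the $\cod(N/I)$-invariance observation you used (take $A=\emptyset$), and it fails in your model $(K,\Delta_1\text{-}\mathrm{Def}(K))$, since there the $\Delta_1$-definable sets \emph{are} the whole second-order universe and $\psi$ fails. So the ``moreover'' clause is not a separate obstacle at all; it falls out of the same machinery, and your guess about the form of the witness is off the mark.
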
 
\begin{proof}
%Assume (i) $\chi$ and $\psi$ be $\pso$ sentences. 
%Suppose first that $\wkls \vdash \chi \biimp \psi$ 
%and that $(M,\mc{X})$ is a model of $\rcas + \neg \iso$. 
%By \cite{ky:categorical}*{Lemma 9}, 
%there exists a $\Sigma^0_1$-cut $I\subsetneq_e M$ closed under $\exp$. By \cite{SimpsonSmith}*{Theorem 4.8},  $(I,\codmi)\vDash \wkls$ and so $(I,\codmi) \vDash \chi \biimp \psi$. 
%It follows from \cref{W_Cut} that $(M,\mc{X})\vDash \chi \biimp \psi$.
The implication (iii) $\imp$ (i) is immediate from
\cite{SimpsonSmith}.

Assume that (i) holds. Note that by \cref{Computably_true}, $\rcas + \psi$ proves
the $\Pi^1_1$ statement \lq\lq for every $A$, if $\is{}{1}(A)$
fails, then $\psi$ is true in the $\Delta_1(A)$-computable
sets\rq\rq. Thus, by (i), this statement is provable in
$\rcas$. However, again by \cref{Computably_true}, in each 
model  of $\rcas + \neg \iso$ this $\Pi^1_1$ statement
is equivalent to $\psi$. This proves that (i) implies (ii).

Now assume that (iii) fails, and let $(M,\mc{X})$ be a countable model of $\wkls + \neg \psi$.
If $(M,\mc{X}) \vDash \neg \iso$, then clearly
$\rcas + \neg\iso \not \vdash \psi$. 
Otherwise, $(M,\mc{X})$ is a model of $\wkl$, so by \cref{ModelExt} there exists a structure $(K,\Delta_1\text{-}\mathrm{Def}(K)) \vDash \rcas$
in which $M$ is a proper $\Sigma^0_1$-cut and $\cod(K/M) = \mc{X}$. By \cref{W_Cut}, we get $(K,\Delta_1\text{-}\mathrm{Def}(K)) \vDash \neg \psi$.
This proves that (ii) implies (iii). 

Note also that if we do have a countable model
$(M,\mc{X})$ of $\wkl + \neg \psi$,
then the structure $(K,\Delta_1\text{-}\mathrm{Def}(K))$
constructed as in the previous paragraph
satisfies $\rcas$ but does not satisfy
the first-order statement \lq\lq $\neg \is{}{1}$ implies that 
the computable sets satisfy $\psi$\rq\rq.
%, which is provable from $\rcas + \psi$ by \cref{Computably_true}. 
This proves that if $\wkl \not \vdash \psi$, then 
$\rcas + \psi$ is not arithmetically conservative over $\rcas$.
\end{proof}

\begin{remark}
The assumption of the \lq\lq moreover\rq\rq\ part of \cref{W_Pi11-cons} could be weakened to $\wkls + \mathrm{supexp} \not \vdash \psi$, using essentially the same proof. Whether the assumption could be weakened simply to (iii) is related to the question whether every sufficiently saturated countable model of $\wkls$ is $\Sigma_1$-definable in an end-extension satisfying $\bs{}{1} + \exp$. Cf.~\cite{ky:categorical}*{Section 5}.

In \cite{fkwy}, it is shown that \emph{every} $\Pi^1_2$ sentence is $\Pi^1_1$-conservative over $\rcas + \neg \iso$ if and only if it is provable from $\wkls + \neg \iso$. Note, however, that the criterion provided by
\cref{W_Pi11-cons} applies to conservativity over $\rcas$, without $\neg \iso$ in the base theory.
\end{remark}

%\begin{theorem} \label{W_Pi11-cons}
%Let $\chi$ be a $\pso$ sentence. The following are equivalent:
%\begin{enumerate}[(i)]
%\item $\rcas + \chi$ is $\Pi^1_1$-conservative over $\rcas$,
%\item $\wkls \vdash \chi$.
%\end{enumerate}
%\end{theorem}
%\begin{proof}
%The reverse direction follows from the fact that $\wkls$ is $\Pi^1_1$-conservative over $\rcas$ (see \cite[Corollary 4.7]{SimpsonSmith}). 

%To prove the forward direction, assume $\chi$ is of the form (\#) and $\rcas + \chi$ is $\Pi^1_1$-conservative over $\rcas$. By \cref{W_Cut}, $\rcas + \chi$, and so $\wkls + \chi$,  proves the $\Pi^1_1$-sentence $\varphi$:=\lq\lq All proper $\Sigma^0_1$-cut satisfy $\chi$\rq\rq.\todo{\marta{Maybe explain better}} By conservativity it follows that $\rcas$, and hence $\wkls$, proves $\varphi$. Suppose, by contradiction, that $\wkls \nvDash \chi$ and let $(M,\mc{X}) \vDash \wkls +\neg\chi$ be countable. 

%If $(M,\mc{X})\nvDash \iso$, then let $I\subsetneq_e M$ be a proper  $\Sigma^0_1$-cut. By \cref{W_Cut}, it holds that $(I, \codmi) \nvDash \chi$, contrary to $(M,\mc{X})\vDash \varphi$.

%Suppose now that $(M,\mc{X})\vDash \iso$ and let $K$ be such that $K \supseteq_e M$ and $M$ is $\Sigma^0_1$-definable in $K$ by \cref{ModelExt}. 
% Then $(K, \Delta^0_1\text{-Def}(K)) \vDash \rcas$, but $M$ witnesses that $(K, \Delta^0_1\text{-Def}(K)) \nvDash \varphi$.
%\end{proof}

We now show that the general facts about $\pso$ sentences
proved above apply in particular to the Ramsey-theoretic principles we study.

\begin{lemma}\label{W_Chi}
Let $\ms{P}$ be one of the principles $\rt{n}{k}$, for $n,k \in \omega$, $\cac$, $\ads$, and $\crttt$. Then there exists 
a $\pso$ sentence $\chi$ which is provably in $\rcas$ equivalent to $\ms{P}$, both in the entire universe and on any proper $\Sigma^0_1$-cut. 
\end{lemma}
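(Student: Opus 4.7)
For each $\ms{P} \in \{\rt{n}{k}, \cac, \ads, \crttt\}$, my plan is to define a first-order (hence second-order) sentence $\gamma_{\ms{P}}$ in an appropriate relational signature $(\le, R_1, \dots, R_m)$ and take $\chi_{\ms{P}}$ to be the corresponding $\pso$ sentence. The key tool will be \cref{W_ADSslimInstance}, which matches the way the arbitrary unbounded $D$ is quantified in the definition of $\pso$. For the requirement that the equivalence hold on proper $\Sigma^0_1$-cuts, no extra work is needed: such a cut $I$ in a model of $\rcas$ is closed under $\exp$, so $(I, \codmi) \vDash \wkls \vdash \rcas$, and the argument for $\chi_{\ms{P}} \leftrightarrow \ms{P}$ will relativize verbatim to $(I, \codmi)$.

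Concretely, for $\rt{n}{k}$ I use $k$ $n$-ary symbols $R_0, \dots, R_{k-1}$ and let $\gamma_{\rt{n}{k}}$ express: either the $R_i$ fail to partition the $\le$-increasing $n$-tuples of the domain, or some $R_i$ contains all such tuples. For $\cac$, a single binary $R$, with $\gamma_{\cac}$ saying \lq\lq if $R$ is a partial order on the domain, then the domain is a chain or antichain in $R$\rq\rq. For $\ads$, analogously, $\gamma_{\ads}$ says \lq\lq if $R$ is a linear order on the domain, then $\le$ agrees with $R$ or with its reverse on the domain\rq\rq. For $\crttt$, $\gamma_{\crttt}$ is the standard first-order expression of stability of $R$ on the domain. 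Each of these uses only $\le$ and the $R_i$'s, so it is a legitimate sentence in the signature required by the definition of $\pso$.

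The forward direction $\chi_{\ms{P}} \imp \ms{P}$ is then immediate: instantiate $\chi_{\ms{P}}$ with $D = \Nb$ and with the relations encoding the given instance; since those relations form a bona fide partial/linear order, partition, or colouring on $\Nb$, their restriction to any $H$ inherits the same structure, so the conclusion of $\gamma_{\ms{P}}$ yields the desired unbounded solution. For the reverse direction $\ms{P} \imp \chi_{\ms{P}}$, given arbitrary relations and $D \cof \Nb$: if the restriction to $D$ is itself a legitimate instance of $\ms{P}$ (which is automatic in the $\crttt$ case), apply the slim-instance form of $\ms{P}$ provided by \cref{W_ADSslimInstance} to obtain $H \cof D$ witnessing the conclusion of $\gamma_{\ms{P}}$; otherwise take $H = D$, so that the antecedent of $\gamma_{\ms{P}}$ already fails on $H$ and $\gamma_{\ms{P}}$ holds vacuously.

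I expect the only mildly delicate step to be this antecedent-vacuity trick for the non-$\crttt$ principles, which is the reason I fold the \lq\lq is a partial order / linear order / partition\rq\rq\ clause into $\gamma_{\ms{P}}$ as an antecedent rather than stating it outside the $\pso$ shell. Once this is clear, the whole argument is transparent in the signature $(\le, R_1, \dots, R_m)$ alone, with no appeal to arithmetic operations, so relativization to $(I, \codmi)$ raises no additional issue.
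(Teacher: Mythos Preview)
Your construction of the $\pso$ sentences $\chi_{\ms{P}}$ and the verification of $\chi_{\ms{P}} \leftrightarrow \ms{P}$ in the full universe are correct and essentially identical to the paper's approach: the same choice of $\gamma_{\ms{P}}$ (with the antecedent-vacuity trick folded in) and the same appeal to \cref{W_ADSslimInstance} for the nontrivial direction.

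However, your justification for why the equivalence relativizes to an arbitrary proper $\Sigma^0_1$-cut contains a genuine error. You assert that such a cut $I$ is closed under $\exp$, so that $(I,\codmi)\vDash\wkls$ by Simpson--Smith and the $\rcas$-level argument transfers verbatim. But proper $\Sigma^0_1$-cuts in models of $\rcas$ need \emph{not} be closed under exponentiation. Indeed, the model built in \cref{W_Inonclosure} satisfies $\wkls$ and has $\mathrm{I}^0_1=\sup\{a^k:k\in\omega\}$ for some nonstandard $a$; thus $a\in\mathrm{I}^0_1$ while $2^a\notin\mathrm{I}^0_1$. Since $\mathrm{I}^0_1$ is the intersection of all $\Sigma^0_1$-cuts, there must be a $\Sigma^0_1$-cut $J$ with $a\in J$ and $2^a\notin J$. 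For such $J$ the structure $(J,\cod(M/J))$ fails $\exp$ and hence fails $\rcas$, so your relativization step does not go through as stated.

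The paper avoids this by not claiming $(I,\codmi)\vDash\rcas$ at all. Instead it observes directly that the analogue of \cref{W_ADSslimInstance} holds in $(I,\codmi)$: the proof of that lemma uses only $\Delta^0_1$-comprehension and $\bs{0}{1}$, both of which $(I,\codmi)$ inherits via the Chong--Mourad coding lemma, and makes no use of $\exp$. Replace your blanket relativization by this targeted check and the argument is complete.
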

\begin{proof}
The proofs are similar for all the above principles $\ms{P}$ 
and rely on \cref{W_ADSslimInstance}. We give a somewhat detailed argument for $\ads$ and restrict ourselves to stating the appropriate $\chi$ for the other principles.

Let $\gamma$ be the sentence
\begin{quotation}
either $R$ is not a linear order \\*
or for every $x,y$ it holds that $R(x,y)$ iff $x \le y$  \\* 
or for every $x,y$ it holds that $R(x,y)$ iff $x \ge y$,
\end{quotation}
and let $\chi$ say that for every set $R$ and every unbounded set $D$, there is $H \cof D$ such that $(H, \le, R)$ satisfies $\gamma$. We claim that $\ads$ is equivalent to $\chi$ provably in $\rcas$. Clearly, if ${\preceq}$ is a linear order on $\Nb$, then $\chi$ applied with $D=\Nb$ and $R={\preceq}$ implies the existence of a set $H$ witnessing $\ads$ for $\preceq$. Thus, $\chi$ implies $\ads$. In the other direction, given a relation $R$ and an unbounded set $D$, either $R$ is a linear order on $D$ or not. In the latter case, $H=D$ witnesses $\chi$. In the former, \cref{W_ADSslimInstance} lets us apply $\ads$ to 
obtain either an ascending or a descending sequence in
$R \cap D^2$, which witnesses $\chi$. Thus, $\ads$ implies $\chi$. 

The above argument also works in a structure of the form $(I, \codmi)$ for $I$ a proper $\Sigma^0_1$-cut $I$ in a model of $\rcas$. To verify this one has to check that an analogue of \cref{W_ADSslimInstance} holds in $(I, \codmi)$,
which is unproblematic.

For $\cac$,  the corresponding $\pso$ sentence $\chi$ says that for every set $R$ and every unbounded set $D$ there exists an unbounded $H\cof D$ such that $(H, \leq, R)\vDash\gamma$, where $\gamma$ states that if $R$
is a partial order, than it is a chain or antichain.
For $\rt{n}{k}$, the sentence $\gamma$ states that if $R_1,\ldots,R_k$ form a colouring of unordered $n$-tuples, i.e.~they are disjoint $n$-ary relations whose union is the set of all $n$-tuples that are strictly increasing with respect to $\le$, then all but one of the relations $R_j$ are in fact empty. For $\crttt$, the appropriate $\gamma$ says that the binary relation $R$ is a stable colouring when restricted to the set of unordered pairs.
\end{proof}

\cref{W_Cut}, \cref{Computably_true}, and \cref{W_Chi} immediately give:

\begin{corollary} \label{rtoncuts}
Let $\ms{P}$ be one of: $\rt{n}{k}$, for each $n,k \in \omega$, $\cac$, $\ads$, and $\crttt$. Then for every $(M,\mc{X}) \vDash \rcas$ and each proper $\Sigma^0_1$-cut $I$ of $M$ it holds that $(M,\mc{X}) \vDash \ms{P}$ if and only if $(I,\codmi) \vDash \ms{P}$. If $A \in \mc{X}$ is such that $(M,A) \vDash \neg \is{}{1}(A)$, then $(M,\mc{X}) \vDash \ms{P}$ if and only if 
$(M, \Delta^0_1\text{-}\mathrm{Def}(M,A))\vDash \ms{P}$.
\end{corollary}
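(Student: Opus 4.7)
The plan is to chain the three preceding results together; the corollary is stated as \lq\lq immediate\rq\rq\ precisely because \cref{W_Chi} is tailor-made to feed into \cref{W_Cut} and \cref{Computably_true}. First I would fix one of the principles $\ms{P}$ in the list and let $\chi$ be the $\pso$ sentence furnished by \cref{W_Chi}. The content of \cref{W_Chi} is exactly that $\rcas$ proves the equivalence $\ms{P} \leftrightarrow \chi$ both in the ambient universe \emph{and} when interpreted in a proper $\Sigma^0_1$-cut, which is the double equivalence I will need to transport $\ms{P}$ through the cut argument.

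For the first assertion, I would fix $(M,\mc{X}) \vDash \rcas$ and a proper $\Sigma^0_1$-cut $I$, and observe that $(I, \codmi) \vDash \wkls$ (by \cite{SimpsonSmith}*{Theorem 4.8} cited in the preliminaries), so in particular $(I, \codmi) \vDash \rcas$ and the second half of \cref{W_Chi} applies there. Applying \cref{W_Cut} to $\chi$ gives $(M,\mc{X}) \vDash \chi$ iff $(I,\codmi) \vDash \chi$, and combining with the two instances of $\ms{P} \leftrightarrow \chi$ yields $(M,\mc{X}) \vDash \ms{P}$ iff $(I,\codmi) \vDash \ms{P}$.

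For the second assertion, suppose $A \in \mc{X}$ with $(M,A) \vDash \neg \is{}{1}(A)$. Because $\rcas$ proves $\bso + \exp$ and $A \in \mc{X}$, the hypothesis $(M,A) \vDash \bs{}{1}(A) + \exp$ recorded in the preliminaries is satisfied, so the structure $(M, \Delta^0_1\text{-}\mathrm{Def}(M,A))$ is a model of $\rcas$ and \cref{W_Chi} yields $\ms{P} \leftrightarrow \chi$ there as well. Applying \cref{Computably_true} to $\chi$ and $A$ gives $(M,\mc{X}) \vDash \chi$ iff $(M, \Delta^0_1\text{-}\mathrm{Def}(M,A)) \vDash \chi$, and chaining the three equivalences finishes the proof.

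There is really no obstacle to overcome: the whole point of isolating the class $\pso$, of proving \cref{W_Cut} and \cref{Computably_true} in that generality, and of checking in \cref{W_Chi} that the equivalence $\ms{P} \leftrightarrow \chi$ holds \emph{uniformly} in the cut is to make the corollary a two-line deduction. The only minor care required is to confirm that both target structures $(I, \codmi)$ and $(M, \Delta^0_1\text{-}\mathrm{Def}(M,A))$ themselves satisfy $\rcas$, which is the standard coding content recalled in Section 2.
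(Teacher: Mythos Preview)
Your proposal is correct and matches the paper's approach exactly: the paper states that the corollary is an immediate consequence of \cref{W_Cut}, \cref{Computably_true}, and \cref{W_Chi}, and you have simply spelled out how these three results chain together. The extra care you take in verifying that $(I,\codmi)$ and $(M,\Delta^0_1\text{-}\mathrm{Def}(M,A))$ satisfy $\rcas$ is sound, though note that \cref{W_Chi} already asserts the equivalence $\ms{P}\leftrightarrow\chi$ on proper $\Sigma^0_1$-cuts directly, so the first of these checks is not strictly needed.
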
 

For $\rt{n}{k}$, the above result was shown in \cite{kky:ramsey-rca0star}.

%It is interesting to notice that the previous results imply that, over $\rcas + \neg\iso$, $\rt{n}{k}$, for each $n,k \in \omega$, $\cac$, $\ads$ and $\crttt$ are computably true. In fact, if $(M,\mc{X}) \vDash \rcas + \neg\iso +\ms{P}$\footnote{One can argue that  $\rcas + \neg\iso +\ms{P}$ is satisfiable as follows: let $(M,\mc{X}) \vDash \wkl + \ms{P}$ and $K \supsetneq_e M$ be such that $M$ is $\Sigma^0_1$-definable in $K$ by \cref{ModelExt}. Then $K \vDash \rcas + \neg \iso$ by construction. Moreover, $K\vDash \ms{P}$ by \cref{W_Cut}.}, for $\ms{P}$ any of the previously mentioned principles and $I \subsetneq_e M$ is a $\Sigma_1$-cut, then $\ms{P}$ holds on $(I, \codmi)$ as well. Checking the second half of the proof of \cref{W_Cut}, one can realise that no parameters from $\mc{X}$ were used to argue that  $(M,\mc{X}) \vDash \ms{P}$. So we can take $\mc{X}=\Delta_1\text{-Def}(M)$ and conclude that $(M,\Delta_1\text{-Def}(M)) \vDash \rcas + \neg\iso + \ms{P}$ as well. This contrasts with the picture over $\rca$ and shows that all the constructions of a computable instance of $\ms{P}$ without any computable solution require $\iso$. \smallskip

It follows from work of Towsner \cite{towsner:constructing-one-at-time}
that $\wkl + \cac$ does not prove $\rttt$ and $\wkl + \ads$ does not prove $\cac$. Therefore, none of the implications $\rt{2}{2} \imp \cac \imp \ads \imp \crttt \imp \top$ (where $\top$ is the constant True) available in $\rca$ can be reversed provably in $\wkl$. It thus follows from \cref{W_Pi11-cons} and \cref{W_Chi} that all principles appearing in this sequence differ in strength over $\rcas + \neg \iso$ and that they can even be distinguished by their first-order consequences over $\rcas$.

\begin{theorem}\label{separation-over-rcas}
Let $\ms{P}$ be one of the principles $\rt{2}{2}, \cac, \ads, \crttt$, and let $\ms{Q}$ be a principle to the right of $\ms{P}$ in this sequence or the constant $\top$. Then:
\begin{enumerate}[(i)]
    \item $\ms{Q}$ does not imply $\ms{P}$ over $\rcas + \neg \iso$,
    \item there is a first-order statement provable in $\rcas + \ms{P}$ but not in $\rcas + \ms{Q}$,
    \item $\rcas + \ms{P}$ is not arithmetically conservative over $\rcas$.
\end{enumerate}
\end{theorem}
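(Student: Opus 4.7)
The plan is to combine the $\wkl$-level separations mentioned in the paragraph preceding the theorem with the cut-transfer machinery developed in \cref{W_Chi}, \cref{W_Cut}, \cref{Computably_true}, and \cref{W_Pi11-cons}. The idea is that each non-implication $\wkl + \ms{Q} \not\vdash \ms{P}$ can be lifted by placing a countable witness model on a proper $\Sigma_1$-cut of a larger model, converting it into a witness of non-implication over $\rcas + \neg\iso$, and then extracting a first-order separator following the recipe in the moreover part of \cref{W_Pi11-cons}.

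First I would invoke, for each pair $(\ms{P},\ms{Q})$ as in the statement, a countable model $(M,\mc{X})\vDash \wkl + \ms{Q} + \neg \ms{P}$; for the adjacent pairs this comes from Towsner's results (and the strictness of $\ads\to\crttt$ over $\wkl$), while the case $\ms{Q}=\top$ reduces to the standard fact that $\wkl \not\vdash \crttt$. Next I would apply \cref{ModelExt} to $(M,\mc{X})$ to obtain $K\supsetneq_e M$ with $M$ a proper $\Sigma_1$-cut of $K$ and $\cod(K/M)=\mc{X}$. Using \cref{W_Chi} and \cref{rtoncuts}, the structure $(K,\Delta_1\text{-}\mathrm{Def}(K))$ inherits both $\ms{Q}$ and $\neg \ms{P}$ from $(M,\mc{X})$; since $M$ is a proper $\Sigma_1$-definable cut of $K$, this structure also satisfies $\neg\is{}{1}$, and hence $\neg \iso$. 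This single model directly proves (i).

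For (ii) I would take as the separating first-order sentence $\phi_{\ms{P}}$, the statement \lq\lq$\neg \is{}{1}$ implies that the $\Delta_1$-definable sets satisfy $\ms{P}$\rq\rq, exactly as used in the moreover part of \cref{W_Pi11-cons}. The provability $\rcas + \ms{P} \vdash \phi_{\ms{P}}$ will come from \cref{Computably_true} applied with a parameter witnessing $\neg\is{}{1}$ (equivalently $\neg \iso(\emptyset)$); non-provability from $\rcas + \ms{Q}$ will be witnessed by the same model $(K,\Delta_1\text{-}\mathrm{Def}(K))$, which satisfies $\rcas + \ms{Q} + \neg\is{}{1}$ but whose second-order universe refutes $\ms{P}$, so $\phi_{\ms{P}}$ fails there. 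Part (iii) will then follow as the special case of (ii) with $\ms{Q}=\top$, in which $\rcas + \ms{Q} = \rcas$.

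The main obstacle should be bookkeeping rather than any serious combinatorics: one must verify that $\phi_{\ms{P}}$ really is a first-order sentence, using the $\pso$ form of $\ms{P}$ provided by \cref{W_Chi} together with the universal $\Sigma_1$ formula $\mathrm{Sat}_1$ to paraphrase quantification over $\Delta_1$-definable sets, and one must check that \cref{Computably_true} applies with a parameter-less witness of $\neg \is{}{1}$. Once these are in place the argument is a direct assembly of \cref{W_Chi}, \cref{W_Cut}, \cref{Computably_true}, \cref{W_Pi11-cons}, and \cref{ModelExt}.
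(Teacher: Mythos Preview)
Your proof is correct and rests on the same underlying machinery as the paper (the $\pso$/cut-transfer results of \cref{W_Chi}, \cref{W_Cut}, \cref{Computably_true}), but the paper packages the argument more efficiently. Rather than constructing the model $(K,\Delta_1\text{-}\mathrm{Def}(K))$ by hand via \cref{ModelExt} and choosing an explicit separator $\phi_{\ms{P}}$, the paper applies \cref{W_Pi11-cons} and its ``moreover'' clause directly to the boolean combination $\psi := (\ms{Q}\imp\ms{P})$: since $\wkl\not\vdash\psi$, condition (iii) of \cref{W_Pi11-cons} fails, hence so does (ii), giving (i) immediately; and the moreover clause yields an arithmetical $\theta$ provable in $\rcas+(\ms{Q}\imp\ms{P})$ but not in $\rcas$, from which (ii) follows by the observation that both $\rcas+\ms{P}$ and $\rcas+\neg\ms{Q}$ prove $\theta$, so $\rcas+\ms{Q}\not\vdash\theta$. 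Your route has the minor advantage that the separator $\phi_{\ms{P}}$ depends only on $\ms{P}$ and is written down explicitly, but the paper's argument avoids re-deriving pieces of \cref{W_Pi11-cons} and entirely sidesteps the bookkeeping you flag as the ``main obstacle''.
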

\begin{proof}
By \cref{W_Chi} we can treat $\ms{P} \imp \ms{Q}$ as a boolean combination of $\pso$ sentences. Since $\wkl$ does not prove $\ms{Q}\imp\ms{P}$, \cref{W_Pi11-cons} gives (i) and additionally implies that there is an
arithmetical sentence $\theta$ provable
in $\rcas + \ms{Q}\imp\ms{P}$ but not in $\rcas$. Then $\rcas+\ms{P}\vdash\theta$ and $\rcas+\neg\ms{Q}\vdash\theta$, so $\rcas+\ms{Q}\nvdash\theta$, which proves (ii). Finally, (iii) is a special case of (ii).
\end{proof}

Together, \cref{lem:easy-implications} and \cref{separation-over-rcas} answer all questions
about provability of implications between our principles in $\rcas$ and $\rcas + \neg\iso$
except the following.

\begin{question}\label{q:cac-implies-crt}
Does $\rcas + \ads$ or $\rcas + \cac$ prove $\crttt$? 	
\end{question}

%Turning now to conservativity issues, \cref{W_Chi,W_Pi11-cons} easily implies that $\wkls$ plus any of $\rt{n}{k}$, for each $n,k \in \omega$, $\cac$, $\ads$, and $\crttt$ is not $\Pi^1_1$-conservative over $\rcas$, given that  $\wkl$, and so $\wkls$, does not prove any of them (see \cite{hirschfeldtShore}). 	. This answers negatively a question posed by Yokoyama in \cite{yokoyama2013onthestrength} and strengthen known results about $\rt{n}{k}$, for each $n,k \in \omega$, $\cac$ and $\ads$ not being $\Pi^1_1$-conservative over $\rca$. Though, the reason of the failure of $\Pi^1_1$-conservativity is different over the two base theories, since $\rca+\ads$ implies $\bst$ \cite{hirschfeldtShore}*{Proposition 4.5}, while  $\rcas+\ads$ does not even imply $\iso$. On the other hand, the previous result contrast with the $\Pi^1_1$-conservativity of $\crttt$ over $\rca$ (follows from the fact that $\rca + \coh$ is  $\Pi^1_1$-conservative over $\rca$, as proved in \cite{CholakJockuschSlaman}*{Theorem 9.1}, and $\rca + \coh \vdash \crttt$).

In the context of item (iii) of \cref{separation-over-rcas}, note that $\crttt$ is $\Pi^1_1$-conservative over $\rca$ \cite{CholakJockuschSlaman}, and while $\cac$ and $\ads$
are not $\Pi^1_1$-conservative over $\rca$ because they imply $\bs{0}{2}$, they are $\Pi^1_1$-conservative over $\rca + \bs{0}{2}$ \cite{CSY2012}.

We turn now to a more fine-grained analysis of conservativity issues. By \cite{kky:ramsey-rca0star}, $\rt{n}{k}$ is $\forall\Pi^0_3$-conservative over $\rcas$. \emph{A fortiori}, all the weaker principles studied in this paper are also 
$\forall\Pi^0_3$-conservative over $\rcas$. (We remark in passing
that the techniques of \cite{kky:ramsey-rca0star} show
that any $\pso$ sentence that is true in some $\omega$-model
of $\wkl$ is $\forall\Pi^0_3$-conservative over $\rcas$.)

%Strengthening the result proved in \cite{yokoyama2013onthestrength}, where $\wkls + \rt{n}{k}$, for each $n,k \in \omega$, is proved to be $\Pi^0_2$-conservative over $\rcas$, we settle the following theorems.

On the other hand, if $\ms{P}$ is one of $\rt{2}{2}$,
$\cac$, and $\ads$, then the statement \lq\lq If $\is{}{1}$ fails, then any computable instance of $\ms{P}$ has a computable solution\rq\rq\ is a $\Pi_4$ sentence of first-order arithmetic.
So, essentially by \cref{rtoncuts}, we get the following nonconservation result (proved in \cite{kky:ramsey-rca0star}
for $\rt{2}{2}$).

\begin{corollary}
None of $\rt{2}{2}$, $\cac$, and $\ads$ is  $\Pi_4$-conservative over $\rcas$.
\end{corollary}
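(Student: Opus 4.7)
The plan is to exhibit, for each $\ms{P} \in \{\rt{2}{2}, \cac, \ads\}$, a $\Pi_4$ sentence $\theta_{\ms{P}}$ that is provable in $\rcas + \ms{P}$ but fails in some model of $\rcas$. I take $\theta_{\ms{P}}$ to be the natural first-order formalization of
\[
\neg \is{}{1} \;\to\; \text{every computable instance of } \ms{P} \text{ has a computable solution}.
\]
To confirm the complexity: ``$e$ codes a total computable instance'' is $\Pi_2$, ``$e'$ codes a computable solution'' is $\Pi_2$ (conjoining $\Pi_2$ totality, $\Pi_2$ unboundedness, and a $\Pi_1$ combinatorial property), so the body $\forall e\, \exists e'\,(\ldots)$ is $\Pi_4$; writing $\neg \is{}{1}$ via the universal $\Sigma_1$-formula $\mathrm{Sat}_1$ shows it is $\Sigma_3$, so implying a $\Pi_4$ statement preserves $\Pi_4$.

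For $\rcas + \ms{P} \vdash \theta_{\ms{P}}$, I would fix a model $(M, \mc{X}) \vDash \rcas + \ms{P}$, assume $\neg \is{}{1}$, invoke \cref{W_Chi} to treat $\ms{P}$ as a $\pso$ sentence, and apply \cref{Computably_true} with the trivial parameter $A = \emptyset$ (so that $\neg \is{}{1}(\emptyset)$ coincides with the assumed $\neg \is{}{1}$). The conclusion $(M, \Delta^0_1\textrm{-}\mathrm{Def}(M)) \vDash \ms{P}$ unwinds to the first-order statement that every instance of $\ms{P}$ given by a $\Delta_1$-formula without set parameters has a solution of the same form, i.e.\ precisely the consequent of $\theta_{\ms{P}}$.

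For $\rcas \not\vdash \theta_{\ms{P}}$, I would construct a model of $\rcas + \neg \theta_{\ms{P}}$ as follows. Since $\wkl$ does not prove any of $\rt{2}{2}$, $\cac$, $\ads$ (by classical computability-theoretic constructions: Jockusch combined with the low basis theorem for $\rt{2}{2}$, and analogous computable partial and linear orders with no computable chain/antichain or ascending/descending set for $\cac$ and $\ads$), I can fix a countable $\omega$-model $(\omega, \mc{X}) \vDash \wkl + \neg \ms{P}$. Applying \cref{ModelExt} lifts this to $(K, \Delta_1\textrm{-}\mathrm{Def}(K)) \vDash \rcas$ in which $\omega$ is a proper $\Sigma_1$-cut and $\cod(K/\omega) = \mc{X}$, so $\is{}{1}$ fails in $K$. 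By \cref{rtoncuts}, $(K, \Delta_1\textrm{-}\mathrm{Def}(K)) \vDash \ms{P}$ iff $(\omega, \mc{X}) \vDash \ms{P}$; since the right-hand side fails by choice of $\mc{X}$, so does the left-hand side, producing a lightface-computable instance in $K$ with no lightface-computable solution and thereby falsifying $\theta_{\ms{P}}$.

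The main piece requiring care will be the syntactic bookkeeping: verifying that ``$e$ codes a total computable instance'' and ``$e'$ codes a computable solution'', formalized inside the first-order theory of $K$, really do correspond to instances and solutions in $\Delta_1\textrm{-}\mathrm{Def}(K)$. This is the standard translation between total recursive indices (in the sense of $K$) and lightface $\Delta_1$-definable objects, and involves no essential difficulty once totality is spelled out.
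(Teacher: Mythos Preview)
Your proposal is correct and follows essentially the same approach as the paper: you identify the same $\Pi_4$ sentence $\theta_{\ms P}$ and derive both its provability in $\rcas + \ms P$ and its unprovability in $\rcas$ from the cut-transfer machinery (\cref{Computably_true}, \cref{rtoncuts}, \cref{ModelExt}). The paper's own argument is a one-line reference to \cref{rtoncuts}, leaving the unprovability direction implicit in the earlier proof of \cref{W_Pi11-cons}; you have simply made that direction explicit.
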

%\begin{proof}
%Consider the sentence $\varphi$ \lq\lq If $\Sigma_1$ fails, then any instance of $\ms{P}$ has a computable solution\rq\rq. For any of the $\ms{P}$,  $\varphi$ is $\Pi^0_4$ and $\wkls + \ms{P} \vdash \varphi$.  To see that $\rcas \nvdash \varphi$ let $(M,\mc{X}) \vDash \wkls + \neg\ms{P}$ and $K \supsetneq_e M$ be such that $M$ is a $\Sigma_1$-cut in $K$ and $(K, \Delta^0_1\text{-Def}(K))\vDash \rcas$ as in \cref{ModelExt}. Then $(K, \Delta^0_1\text{-Def}(K))\nvDash \ms{P}$ by \cref{W_Cut}, and so $(K, \Delta^0_1\text{-Def}(K))\nvDash \varphi$.   
%\end{proof}

Thus, we have tight bounds on the amount of conservativity
of $\rt{2}{2}$, $\cac$, and $\ads$ over $\rcas$. 
On the other hand, the sentence \lq\lq If $\is{}{1}$ fails, then any computable instance of $\crttt$ has a computable solution\rq\rq\ is only $\Pi_5$. So, we get:

\begin{corollary}\label{cor:crttt-non-conservative}
$\crttt$ is $\forall\Pi^0_3$- but not $\Pi_5$-conservative over $\rcas$.
\end{corollary}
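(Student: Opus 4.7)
The statement splits into two parts. The positive part, $\forall\Pi^0_3$-conservativity, is already covered by the remarks preceding the corollary: $\crttt$ follows from $\rt{2}{2}$ by \cref{lem:easy-implications}, and $\rt{n}{k}$ is $\forall\Pi^0_3$-conservative over $\rcas$ by \cite{kky:ramsey-rca0star}, so the same holds a fortiori for $\crttt$.

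For the non-conservation part my plan is to exhibit a specific $\Pi_5$ sentence $\theta$ provable in $\rcas + \crttt$ but not in $\rcas$. I take $\theta$ to be the natural formalisation of \lq\lq if $\iso$ fails, then every computable $2$-colouring of $[\Nb]^2$ has a computable unbounded stable subset\rq\rq. A quick complexity count confirms $\theta \in \Pi_5$: $\neg\iso$ is $\Sigma_3$; \lq\lq $\Phi_e$ is a total $2$-colouring\rq\rq\ is $\Pi_2$; and \lq\lq $\Phi_i$ codes a total, unbounded, stable set\rq\rq\ is $\Pi_3$, where the jump from $\Pi_2$ to $\Pi_3$ comes from stability being a genuine $\forall\exists\forall$ statement---this is the extra quantifier alternation over homogeneity that prevents us from getting $\Pi_4$, unlike in the cases of $\rt{2}{2}$, $\cac$, and $\ads$; $\exists i$ then gives $\Sigma_4$, the universal $e$ gives $\Pi_5$, and prepending the $\Sigma_3$ antecedent still leaves us in $\Pi_5$. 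That $\rcas + \crttt \vdash \theta$ is immediate from \cref{rtoncuts} applied with $A = \emptyset$: in any model $(M, \mc{X})$ of $\rcas + \crttt + \neg\iso$, the reduct $(M, \Delta_1\text{-}\mathrm{Def}(M))$ again satisfies $\crttt$, and that is exactly what the consequent of $\theta$ asserts.

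To show $\rcas \nvdash \theta$ I would construct a model of $\rcas + \neg\iso$ in which $\crttt$ fails in the $\Delta_1$-definable sets. Start with a countable $\omega$-model $(\omega, \mc{Y})$ of $\wkl + \neg\crttt$; apply \cref{ModelExt} to obtain $K \supsetneq_e \omega$ with $(K, \Delta_1\text{-}\mathrm{Def}(K)) \vDash \rcas$, $\omega$ a proper $\Sigma_1$-cut of $K$, and $\cod(K/\omega) = \mc{Y}$. The cut being proper gives $\neg\iso$ in $K$, and \cref{rtoncuts} applied to the cut $\omega$ yields
\[(K, \Delta_1\text{-}\mathrm{Def}(K)) \vDash \crttt \ \Longleftrightarrow\ (\omega, \mc{Y}) \vDash \crttt,\]
so the right-hand side fails by choice of $\mc{Y}$, producing a $\Delta_1$-definable instance of $\crttt$ in $K$ with no $\Delta_1$-definable solution, which refutes $\theta$. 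The only step not handled by earlier lemmas of the paper is producing $(\omega, \mc{Y}) \vDash \wkl + \neg\crttt$, and this is where I expect the only real (if minor) obstacle: one verifies it by the standard observation that the Turing ideal below a low PA degree $a$ is an $\omega$-model of $\wkl$ in which $\crttt$ fails, since there exist computable $2$-colourings of pairs all of whose stable subsets compute $\emptyset'$, while $\emptyset' \not\le_T a$.
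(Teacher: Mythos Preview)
Your approach is the paper's own: the same $\Pi_5$ sentence $\theta$, provability of $\theta$ in $\rcas+\crttt$ via \cref{rtoncuts}, and unprovability of $\theta$ in $\rcas$ via the construction from the ``moreover'' part of \cref{W_Pi11-cons}, which you unpack explicitly (specialising to an $\omega$-model). The paper leaves the corollary implicit, simply noting that the sentence is $\Pi_5$ and pointing back to those earlier results.

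One point needs correcting, however. Your final recursion-theoretic claim is false: there is no computable $2$-colouring of pairs all of whose infinite stable sets compute $\emptyset'$, because $\crttt$ (as a consequence of $\rttt$) admits cone avoidance. The conclusion you want---that the Turing ideal below a low PA degree fails $\crttt$---is nevertheless correct, but the argument goes differently: in $\omega$-models $\bs{0}{2}$ holds, so $\crttt$ and $\coh$ coincide there; by Jockusch--Stephan every $p$-cohesive set has jump of PA degree over $\emptyset'$, hence no low set is $p$-cohesive, and the ideal below a low PA degree fails $\coh$ and therefore $\crttt$. Alternatively, you can simply cite $\wkl \nvdash \crttt$ as a known fact, as the paper does in the discussion preceding \cref{separation-over-rcas}; then any countable model of $\wkl+\neg\crttt$ (not necessarily an $\omega$-model) feeds into \cref{ModelExt} and \cref{rtoncuts} exactly as in the proof of \cref{W_Pi11-cons}.
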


The following intriguing question remains open:

\begin{question}\label{W_questionCons}
Is $\wkls + \crttt$  $\forall\Pi^0_4$-conservative over $\rcas$?
\end{question}
 
\subsection{Closure properties}\label{Subsec_closure} 

To conclude our discussion of normal versions of combinatorial principles, we will show that over $\rcas$
the principle $\cac$ and all of its consequences are significantly weaker than $\rt{2}{2}$ in a technical sense related to the closure properties of cuts.

Working in $\rcas$, we write $\mathrm{I}^0_{1}$ 
to denote the definable cut consisting
of those numbers $x$ such that each unbounded set $S \cof \Nb$ contains a finite subset of cardinality $x$. 
Note that by the correspondence between unbounded subsets
of $\Nb$ and $\Sigma^0_1$-cuts stated in \cref{cofinalset}, %\todo{make sure this is well-stated}
$\mathrm{I}^0_1$ is simply the intersection of all $\Sigma^0_1$-cuts. 
Thus, $\mathrm{I}^0_1 = \Nb$ exactly if $\iso$ holds. 

It is easy to show in $\rcas$ that $\mathrm{I}^0_1$ is closed under multiplication: let $S$ be an infinite set that does not contain a finite set of cardinality $a^2$ and compute
a set $S'$ by taking \lq\lq every $a$-th element\rq\rq\ of $S$.
If $S'$ is finite, then some infinite end-segment 
of $S$ does not contain
any finite subset of cardinality $a$. If $S'$ is infinite,
then $S'$ itself is an infinite set with no subset of cardinality $a$, because otherwise we would find at least $a^2$ elements of $S$.

In \cite{speedup}*{Section 3}, it is shown that $\rt{2}{2}$ implies a stronger closure property, namely that $\mathrm{I}^0_1$ is closed under exponentiation. The proof of this result makes use of the well-known almost exponential
lower bounds on finite Ramsey numbers for 2-colourings of pairs. The result has some interesting consequences, among them the fact that $\rcas + \rt{2}{2}$ has nonelementary proof speedup over $\rcas$. (This was, in fact, the original motivation for
studying connections between Ramsey-theoretic principles and closure properties of  $\mathrm{I}^0_1$.) Another consequence is that
the theory $\rca + \rttt + \neg\is{}{2}$ does not prove that $\rttt$ holds in the family of $\Delta_2$-definable sets \cite{kky:ramsey-rca0star}; this rules out a potential approach to separating the arithmetical consequences of $\rca +\rttt$ from $\bs{}{2}$.

Below, we show that $\ads$ and $\cac$ are weaker than $\rt{2}{2}$ in this respect, as they do not imply the closure
of $\mathrm{I}^0_1$ under any superpolynomially growing function.
Our argument will be a typical initial segment construction,
resembling for instance the one in \cite{ky:ordinal-valued-ramsey}*{Theorem 3.3}, and it will once again make use of bounds on the finite version of the appropriate combinatorial principle. In this case, we will take advantage of the fact that, for $k \ge 2$, a partial order with $k^2-k$ elements contains either a chain or an antichain of size at least $k$. 
Indeed, by Dilworth's theorem, if the largest
antichain in a finite order has at most $k-1$ elements,
then the order can be presented as the union of $k-1$ chains.
If the order contains at least $k^2-k$ elements,
then one of those chains must have length at least $k$. 

\begin{theorem}\label{W_Inonclosure}
Let $g$ be a $\Sigma_1$-definable function such that
for every $k \in \omega$ there exists $n \in \omega$
such that $\rcas \vdash \forall x \! \ge \! n\, (g(x) \ge x^k)$. Then neither $\wkls + \cac$ nor $\wkls + \ads$ proves that $\mathrm{I}^0_1$ is closed under $g$.
\end{theorem}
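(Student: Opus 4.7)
Since $\cac$ implies $\ads$ over $\rcas$ by \cref{lem:easy-implications}, it suffices to construct a countable model $(M, \mathcal{X}) \vDash \wkls + \cac$ in which $\mathrm{I}^0_1$ is not closed under $g$; concretely, I aim to produce a nonstandard element $a \in M$ with $g(a) \in M$, $a \in \mathrm{I}^0_1$, and $g(a) \notin \mathrm{I}^0_1$.

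The essential combinatorial input is the polynomial bound on finite $\cac$-Ramsey numbers recorded in the paragraph before the theorem: by Dilworth's theorem, every partial order with $k(k-1)$ elements has a chain or antichain of size $k$. This polynomial growth rate, in sharp contrast to the almost-exponential bound for $\rt{2}{2}$ used in \cite{speedup} to force closure of $\mathrm{I}^0_1$ under $\exp$, is what will allow $\mathrm{I}^0_1$ to be a merely multiplication-closed cut inside a model of $\cac$.

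The construction itself follows the initial-segment paradigm of \cite{ky:ordinal-valued-ramsey}*{Theorem 3.3}. I begin from a countable recursively saturated $(K, \mathcal{Y}) \vDash \wkl$ and pick a nonstandard $a \in K$ so that, by the hypothesis on $g$ together with saturation, both $b := g(a)$ and each iterated exponential $\exp_n(b)$ for $n \in \omega$ lie in $K$. Applying the construction from \cref{ModelExt}, I pass to a countable structure $(M, \mathcal{X}) \vDash \wkls$ in which $K$ appears as a proper $\Sigma^0_1$-cut with $\mathcal{Y} = \cod(M/K)$. Since $(K, \mathcal{Y}) \vDash \wkl \vdash \cac$, \cref{rtoncuts} transfers $\cac$ up to $(M, \mathcal{X})$. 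It then remains to arrange that the external cut $\sup_{k \in \omega} a^k$ of $K$ coincides with $\mathrm{I}^0_1$ in $(M, \mathcal{X})$: since $b > a^k$ for every $k \in \omega$ by the hypothesis on $g$, this immediately gives $b \notin \mathrm{I}^0_1$, while $\{a^k : k \in \omega\} \subseteq \mathrm{I}^0_1$ follows automatically from the multiplicative closure of $\mathrm{I}^0_1$ proved before the theorem, provided $a$ itself is in $\mathrm{I}^0_1$.

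The main obstacle is exactly this last identification: arranging that the external cut $\sup_{k \in \omega} a^k$ is (i) captured by some $\Sigma^0_1$-formula of $(M, \mathcal{X})$, so that it is a proper $\Sigma^0_1$-cut and thus bounds $\mathrm{I}^0_1$ from above, and (ii) contained in every proper $\Sigma^0_1$-cut of $(M, \mathcal{X})$, so that it bounds $\mathrm{I}^0_1$ from below. Both requirements are met by a careful type-realization in $(K, \mathcal{Y})$: one first uses recursive saturation of $K$ to realize a type placing $a$ below every $\Sigma^0_1$-definable unbounded function of $K$, which after the extension puts $a$ inside every $\Sigma^0_1$-cut of $(M, \mathcal{X})$; one then tailors the extension parameters so that the map $k \mapsto a^k$ admits an internal $\Sigma^0_1$-parameterization whose range is precisely $\{a^k : k \in \omega\}$. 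The polynomial $\cac$-Ramsey bound is essential at this stage, for it is what keeps these simultaneous type-realizations consistent with $\cac$; with $\rt{2}{2}$ in place of $\cac$, the almost-exponential Ramsey bound of \cite{speedup} would render such an $a$ inconsistent with $\rt{2}{2}$ and block the construction.
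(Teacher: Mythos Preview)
Your argument has a fatal error at the transfer step: you write ``Since $(K, \mathcal{Y}) \vDash \wkl \vdash \cac$'', but $\wkl$ does \emph{not} prove $\cac$. Indeed, $\cac$ implies $\bs{0}{2}$ over $\rca$ \cite{hirschfeldtShore}, while $\wkl$ is $\Pi^1_1$-conservative over $\rca$ and hence cannot prove $\bs{0}{2}$. So \cref{rtoncuts} gives you nothing here. You could try to repair this by starting from a model of $\wkl + \cac$ instead, but then the rest of the argument collapses into hand-waving: the phrases ``realize a type placing $a$ below every $\Sigma^0_1$-definable unbounded function'' and ``tailors the extension parameters so that the map $k \mapsto a^k$ admits an internal $\Sigma^0_1$-parameterization'' are not proofs. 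In particular, $\mathrm{I}^0_1$ is the intersection of \emph{all} $\Sigma^0_1$-cuts of $(M,\mathcal{X})$, not just the one cut $K$ you built, and nothing in your construction controls the other $\Sigma^0_1$-cuts. Your final paragraph also misstates where Dilworth's theorem enters: it is not a consistency condition on a type, but a concrete bound used at each step of an explicit construction.

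The paper's proof is a genuinely different, direct initial-segment construction in the style of \cite{ky:ordinal-valued-ramsey}*{Theorem 3.3}. One starts with a countable nonstandard $M \vDash \id{}{0} + \mathrm{supexp}$ and a nonstandard $a$, and builds a descending chain $F_0 \supseteq F_1 \supseteq \ldots$ of $M$-finite sets, maintaining the invariant that $|F_n| \ge a^c$ for some nonstandard $c$. The steps are interleaved in threes: step $3n$ shrinks $F$ to have at most $a^{c_n}$ elements (forcing the eventual cut to be proper and $\mathrm{I}^0_1$ to lie below $a^{c_n}$); step $3n+1$ dodges the $n$th small coded set $S_n$ (forcing $a^k \in \mathrm{I}^0_1$ for each standard $k$); and step $3n+2$ passes to a chain or antichain in the $n$th coded partial order. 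Dilworth's theorem is used precisely at step $3n+2$: from $|F_{3n+1}| \ge a^c$ one extracts a chain or antichain of size at least $a^{c/2}$, so the nonstandard-exponent invariant survives. The cut $I = \sup_n \min(F_n)$ then satisfies $(I,\cod(M/I)) \vDash \wkls + \cac$ with $\mathrm{I}^0_1 = \sup_{k \in \omega} a^k$ exactly.
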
 
\begin{proof}
Of course, since $\cac$ implies $\ads$ over $\rcas$, it is enough to show that $\wkls + \cac$ does not imply the closure of $\mathrm{I}^0_1$ under any superpolynomially growing function $g$. 

Let $M$ be a countable nonstandard model of $\id{}{0} + \mathrm{supexp}$ and let $a \in M \setminus \omega$. We will construct a cut $I \subsetneq_e M$ in such a way that $(I, \codmi) \vDash \wkls + \cac$ and $\mathrm{I}^0_1(I,\codmi) = \sup\{a^k \mid k \in \omega\}$. %\todo{in the end, I think the notation for $\mathrm{I}^0_1$ of a model should be clear} 
This will suffice to prove the theorem since it will hold that $a\in\mathrm{I}^0_1(I,\codmi)$ and for any superpolynomially growing function $g$, $g(a)\notin\mathrm{I}^0_1(I,\codmi)$. 

%Let $S= \{ \exp_m(2) \mid m \in M\}$. Then, $S$ is a cofinal $\Delta_1$-definable subset of $M$. 
% and, there exists $b \in M$ nonstandard such that $\exp_b(a) \in S$, because $M \vDash \iso$. Fix such number $b$.
Let $(S_n)_{n \in \omega}$ be an enumeration of all $M$-finite sets with cardinality below $a^k$ for some $k \in \omega$, let $(c_n)_{n \in \omega}$ be an enumeration of all nonstandard elements of $M$ and let $(\preceq_n)_{n \in \omega}$ be an enumeration of all $M$-coded partial orders with domain $[0, \exp_{a^a}(2)]$.

By induction on $n \in \omega$, we will construct a decreasing chain $F_0 \supseteq F_1 \supseteq F_2 \dots$ of $M$-finite sets, maintaining the condition that for each $n$ there is some $c \in M \setminus \omega$ such that $|F_n| \geq a^c$. Moreover, we will also make sure that for each $n \in \omega$, $|F_{3n}|\le a^{c_n}$, that $[\mr{min}(F_{3n+1}),\mr{max}(F_{3n+1})] \cap S_n = \emptyset$, and that $F_{3n+2}$ is either a chain or an antichain in the partial order $\preceq_n$.  

We initialize the construction by setting
$F_{-1}: = \{1,2,4,16,2^{16}, \ldots,\exp_{a^a}(2)]$. 
In step $3n$, if $|F_{3n-1}|> a^{c_n}$, let $F_{3n} \subsetneq F_{3n-1}$ be such that $|F_{3n}| = a^{c_n}$ and $\mr{min} (F_{3n}) > \mr{min} (F_{3n-1})$. Otherwise, let $F_{3n} = F_{3n-1} \setminus \{\mr{min}(F_{3n-1})\}$. 

In step $3n+1$, consider the set $S_n$. Let $k \in \omega$ be such that $|S_n|=a^k$. Assume w.l.o.g.~(by taking a proper subset of $F_{3n}$ if necessary) that $F_{3n}$ has exactly $a^c$ elements for some nonstandard $c \in M$, and let $(f_i)_{1 \le i \le a^c}$ be the increasing enumeration of $F_{3n}$. Then $F_{3n}$ can be split into $a^{k+1}$ \lq\lq intervals\rq\rq\ as follows:  
\[\{f_1,\ldots, f_{a^d}\} \cup \{f_{a^d+1}, \ldots, f_{2a^d}\} \cup \ldots \cup \{f_{(a^{k+1}-1)a^d+1},\ldots,f_{a^{k+1}a^d}\}\]
where $d=c-k-1$. Since $a^{k+1} > a^k$, the pigeonhole principle implies that there is some $i_0 < a^{k+1}$ such that $[f_{i_0a^d+1}, f_{(i_0+1)a^d}] \cap S_n =\emptyset$. 
Let $F_{3n+1}$ be the set $\{f_j \mid i_0a^d+1 \le j \le (i_0+1)a^d\}$. Notice that $|F_{3n+1}| \geq a^{c-k-1}$ and that $[\mr{min}(F_{3n+1}),\mr{max}(F_{3n+1})]  \cap S_n = \emptyset$ as wanted. 

In step $3n+2$, consider ${\preceq_n}\restric F_{3n+1}$. By construction, $|F_{3n+2}| \geq a^c$ for some nonstandard $c$. Dilworth's theorem guarantees that there exists $C \subseteq F_{3n+2}$ such that $|C| \ge a^{c/2}$ and $C$ is either a chain or an antichain in $\preceq_n$. Set $F_{3n+2} = C$.  

Finally, let $I$ be the initial segment $\sup\{\mr{min}(F_n) \mid n \in \omega\}$. We check that $I$ satisfies the requirements of our construction.

Notice that $I \subsetneq_e M$, given that $\mr{max}(F_0) \in M \setminus I$. By the construction of step $3n$, $I$ is a cut (that is, contains no greatest element) and for every $k\in\omega$, $F_k\cap I\cof I$. Moreover, $I \vDash \exp$ because $F_{-1} \cap I \cof I$,
and if $x<y$ are two elements of $F_{-1}$, then $2^x \le y$. Hence, $(I, \codmi) \vDash \wkls$ by \cite{SimpsonSmith}*{Theorem 4.8}. 

If $\preceq$ is a partial order in $\codmi$, then  ${\preceq} = {\preceq'} \cap I$ for some $M$-finite set $\preceq'$. Note that there must exist $b \in M \setminus I$ such that ${\preceq'} \restric [0,b]$ is a partial order; otherwise, $I$ would be $\Delta_0(\preceq')$-definable as the set of $i \in M$ for which ${\preceq'}\restric [0,i]$ is a poset. 
It is thus possible to extend $\preceq'$ to an order $\preceq''$ over $M$ by making every element $\le$-greater than $b$ incomparable in $\preceq''$ with all other elements of $M$. The order $\preceq''$ is $\Delta_0$-definable in $M$, so there exists $n \in \omega$ such that ${\preceq''} \restric [0,\exp_{a^a}(2)]= {\preceq_n}$. At step $3n+2$, we chose $F_{3n+2}$ as a chain or antichain in $\preceq_n$. Thus, $F_{3n+2} \cap I \in \codmi$ is  either a chain or an antichain in $\preceq$, and moreover $F_{3n+2} \cap I \cof I$. So, $(I, \codmi) \vDash \cac$.

It remains to check that $\mathrm{I}^0_1(I,\codmi)= \sup\{a^k \mid k \in \omega\}$. To prove the $\subseteq$ inclusion, let
$c \in M$ be nonstandard and let $n \in \omega$ be such that $c = c_n$. Consider $F_{3n} \cap I \in \codmi$, which is a cofinal subset of $I$. Since $F_{3n}$ has at most $a^c$ elements, then $F_{3n} \cap I$ has no finite subset of  cardinality $a^c$. To prove the reverse inclusion, we have to show that for each $k \in \omega$ and each $U \in \codmi$ such that $U \cof I$, there exists an $M$-finite set 
$V \subseteq U$ such that $|V|=a^k$. 
Let $U = U' \cap I$ for some $M$-finite set $U'$.
Note that $|U'| \ge a^k$, because otherwise
$|U'| = S_n$ for some $n \in \omega$ and the construction
of step $3n + 1$ guarantees that $[\mr{min}(F_{3n+1}),\mr{max}(F_{3n+1})] \cap U' = \emptyset$, so $U = U' \cap I \not\cof I$.
So, let $V \subseteq U'$ be the set consisting of the first $a^k$ elements of $U'$. Again, $V = S_n$ for some $n \in \omega$.
The construction of step $3n+1$ guarantees that $V' \cap I \not \cof I$. 
This means that we must have $V \subseteq U$, because otherwise there would be a largest element of $V \cap U$, then an element $u \in U \setminus V$ above it, and then an element $v \in V \setminus U$ above $u$, contradicting the definition of $V'$. Therefore, $V$ is an $M$-finite set of cardinality $a^k$ contained in $U$.  
\end{proof}

\begin{remark}
The technique used in the proof of \cref{W_Inonclosure}
can also be used to show that $\wkls + \rt{n}{k}$ does
not imply the closure of $\mathrm{I}^0_1$ under any function
of nonelementary growth rate. With a more careful choice of the initial model $M$, it can also be used to
prove slight refinements of the theorem such as the
$\forall\Pi^0_3$-conservativity of $\wkls + \cac + \lq\lq \mathrm{I}^0_1$ is not closed under any superpolynomially growing function\rq\rq\ over
$\rcas$. We do not pursue this topic further in this paper.
\end{remark}

Combining the techniques used above with the ones of 
\cite{speedup}*{Section 3}, one can show that over $\rcas$ the Erd\"{o}s-Moser principle $\emo$, or even a weakening that only requires  the solution to be an unbounded set on which a given colouring is semitransitive, implies the closure of $\mathrm{I}^0_1$ under exponentiation. This is because the lower bounds on general Ramsey numbers for pairs, along with the upper bounds on Ramsey numbers associated to orderings provided by Dilworth's theorem, imply that given $k \in \omega$, the smallest $n$ such that any 2-colouring of pairs from $\{1,\ldots,n\}$ is semitransitive on a set of size $k$ has size $2^{\Omega(\sqrt{k})}$. 

On the other hand, it is quite unclear what closure properties of $\mathrm{I}^0_1$, if any, are implied by $\crttt$. 

\begin{question}\label{q:crt-closure}
Does $\crttt$ imply that $\mathrm{I}^0_1$ is closed under $\exp$?
\end{question}

A positive answer to this question
would give negative answers to \cref{q:cac-implies-crt} and \cref{W_questionCons}. For the latter, notice that \lq\lq$\mathrm{I}^0_1$ is closed under $\exp$\rq\rq can be expressed by a $\forall\Pi^0_4$ sentence,
and \lq\lq$\mathrm{I}^0_1$ in the computable sets is closed under $\exp$\rq\rq\
can even be expressed by a purely first-order $\Pi_4$ sentence.

One reason why it is not clear whether the techniques
of \cref{W_Inonclosure} can be applied to $\crttt$ is that this principle does not have an obvious \lq\lq finite version\rq\rq\ because of the relatively high quantifier complexity of its first-order part (what is a meaningful notion of \lq\lq stable set\rq\rq\ in the finite?). Answering \cref{q:crt-closure} might require devising such a finite version of $\crttt$
(or of $\srttt$) and finding bounds on Ramsey numbers associated with it.

\section{Long principles} \label{Sec_Fat}

We now focus our attention on the long versions of Ramsey-theoretic principles.

As in the case of normal versions, many implications with an easy proof in $\rca$ transfer to $\rcas$ with no particular difficulty. Additionally, as discussed in Section \ref{Subsec_normal-long}, it is straightforward to prove that $\ladsst$ implies
$\ladssq$. The following result summarizes the \lq\lq  easy\rq\rq\  implications between our principles, as well as the non-implications known from $\rca$.

\begin{lemma}\label{lRT_lCAC_lADS}
Over $\rcas$, the following sequences of implications hold: 
\begin{align*}
\lrt{2}{2} \imp \lcac \imp \ladsst \imp \ladssq, \\
\lrt{2}{2} \imp \lcrttt.
\end{align*}
None of the implications $\lrt{2}{2} \imp \lcac \imp \ladsst$ and $\lrt{2}{2} \imp \lcrttt$ can be provably reversed in $\rcas$.
\end{lemma}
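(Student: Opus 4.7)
The plan is to observe that each implication in the statement is obtained simply by carrying out the standard $\rca$-level argument (as in \cref{lem:easy-implications}) inside $\rcas$: every reduction used is $\Delta^0_1$ in the given instance and preserves the ``cardinality $\Nb$'' property of the solution, so no appeal to $\iso$ is needed. The non-implications will then transfer from the known separations between the normal versions over $\rca$, using the fact that in $\rca$ every unbounded set has cardinality $\Nb$, so that each long principle coincides with its normal counterpart.

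For the implications, I would proceed as follows. To prove $\lrt{2}{2} \imp \lcac$, given a partial order $(\Nb, \preceq)$, I form the $\Delta^0_1(\preceq)$-definable colouring $c$ with $c(x,y) = 0$ iff $x,y$ are $\preceq$-comparable; transitivity of $\preceq$ ensures that any long homogeneous set is a chain or an antichain. For $\lcac \imp \ladsst$, I apply $\lcac$ to the partial order $\leq^*$ on $\Nb$ defined from a linear order $(\Nb, \preceq)$ by $x \leq^* y \iff x \leq y \land x \preceq y$: any long chain in $\leq^*$ is an ascending set solution to $\ladsst$, and any long antichain is a descending one. For $\ladsst \imp \ladssq$, a set $S$ of cardinality $\Nb$ comes by definition with an increasing enumeration $S = \{s_i : i \in \Nb\}$, and if $S$ is, say, a $\preceq$-ascending set solution, then $(s_i)_{i \in \Nb}$ is already the desired $\preceq$-increasing sequence. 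Finally, any homogeneous $S$ of cardinality $\Nb$ returned by $\lrt{2}{2}$ is automatically stable, since for each $x \in S$ one may take any $y \in S$ with $y > x$ (which exists because $|S| = \Nb$) and the homogeneity of $c$ on $S$ gives $c(x,z) = c(x,y)$ for all $z \in S$ with $z \ge y$; this yields $\lrt{2}{2} \imp \lcrttt$.

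For the non-implications, I would recall that $\rca$ proves the equivalence of each long principle with its normal version, since $\iso$ guarantees every unbounded set has cardinality $\Nb$. The separations $\cac \not\vdash \rt{2}{2}$ and $\ads \not\vdash \cac$ over $\rca$, proved in \cite{hirschfeldtShore} and \cite{Lerman2013Separating} respectively, together with $\crttt \not\vdash \rttt$ from \cite{hirschfeldtShore}, thus supply models of $\rca$, and hence of $\rcas$, witnessing $\lcac \not\vdash \lrt{2}{2}$, $\ladsst \not\vdash \lcac$, and $\lcrttt \not\vdash \lrt{2}{2}$. The whole argument is essentially routine; the main thing to keep track of is that the reductions above truly do preserve the ``cardinality $\Nb$'' property, which they do because in each case the returned solution is either the very set produced by the hypothesis principle or a canonical re-indexing of it.
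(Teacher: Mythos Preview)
Your proposal is correct and follows the same approach as the paper, which treats the lemma as a summary of routine facts: the implications are the standard $\rca$-level reductions (which, as you note, are $\Delta^0_1$ and manifestly preserve cardinality $\Nb$), and the non-reversals are inherited directly from the known separations over $\rca$, where long and normal versions coincide. The paper does not spell out the reductions in as much detail as you do, but your explicit choices (comparability colouring for $\lrt{2}{2}\imp\lcac$, the intersection order $\le^*$ for $\lcac\imp\ladsst$, etc.) are exactly the intended ones.
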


In the rest of this section, we describe some results obtained in an attempt to answer questions left open by \cref{lRT_lCAC_lADS}.
It will follow from these results (specifically from \cref{S_iso} and \cref{S_ADSGGP}) that also the implication $\ladsst \imp \ladssq$ cannot be provably reversed in $\rcas$, and that $\ladsst$ implies $\lcrttt$. 

Perhaps more interestingly, it turns out that all of the long principles we consider behave in one of two contrasting ways. Some of them are like $\lrt{2}{2}$, in that they are rather easily seen to imply $\Sigma^0_1$-induction. On the other hand, other long principles are partially conservative over $\rcas$, which makes them closer to normal principles in a well-defined technical sense. We begin by
discussing the former type of behaviour.
%The following theorems witness that the long versions are \lq stronger\rq\than the standard ones.

\begin{theorem}\label{S_iso}
Over $\rcas$, each of the principles 
$\lrttt$, $\lcac$, $\ladsst$ implies $\iso$.
\end{theorem}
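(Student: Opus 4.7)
The plan is to argue the contrapositive in each case: working in $\rcas + \neg \iso$, I would construct, for each principle $\ms{P} \in \{\lrttt, \lcac, \ladsst\}$, an instance with no long solution. The common raw material is the structure furnished by a proper $\Sigma^0_1$-cut. Using \cref{cofinalset}, I fix such a cut $I$ together with an unbounded set $A = \{a_i : i \in I\}$ in strictly increasing enumeration (with $a_{-1} = -1$), and via $\Delta^0_1(A)$-comprehension form the block function $\alpha \colon \Nb \to A$ sending $x$ to the least $a \in A$ with $a \ge x$. The blocks $B_i = \alpha^{-1}(a_i) \subseteq (a_{i-1}, a_i]$, indexed by $i \in I$, then partition $\Nb$.

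The uniform key observation, which I would isolate at the start, concerns any set $S = \{s_n : n \in \Nb\}$ of cardinality $\Nb$, listed in increasing order. First, if $S$ is contained in a single block $B_i$, then $S$ is bounded by $a_i$, contradicting the unboundedness of a set of cardinality $\Nb$. Second, if $S$ meets each block in at most one point, then the function $n \mapsto i(n)$ with $s_n \in B_{i(n)}$ (available by $\Delta^0_1$-comprehension from $S$ and $A$) is strictly increasing from $\Nb$ into $I$. By $\Delta^0_0$-induction on $n$, with $i$ as a set parameter, I would conclude $i(n) \ge n$; since $n \le i(n) \in I$ and $I$ is downward closed, this forces $n \in I$ for every $n \in \Nb$, contradicting the properness of $I$.

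The three instances are then tailored so that any potential long solution falls into one of these two shapes. For $\lrttt$ I would take the $\Delta^0_1(A)$-definable colouring $c(x,y) = 1$ iff $\alpha(x) = \alpha(y)$: a $1$-homogeneous set lies in one block, and a $0$-homogeneous set has at most one element per block. For $\lcac$ I would take the partial order $x \preceq y$ iff $\alpha(x) = \alpha(y)$ and $x \le y$, whose chains sit inside a single block and whose antichains are one per block. For $\ladsst$ I would take the linear order obtained by putting the blocks in reverse order while keeping $\le$ inside each block, namely $x \preceq y$ iff $\alpha(x) > \alpha(y)$, or else $\alpha(x) = \alpha(y)$ and $x \le y$: a long set on which $\preceq$ coincides with $\le$ would have to sit in a single block (because $x < y$ implies $\alpha(x) \le \alpha(y)$, so $\alpha(x) > \alpha(y)$ is ruled out), while a long set on which $\preceq$ coincides with $\ge$ can have at most one element per block.

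The only genuine obstacle is the bookkeeping around induction, since $\rcas$ only offers $\Delta^0_1$-induction. The inductive step \lq\lq$i(n) \ge n$ for all $n$\rq\rq{} has to be framed so that $i$ already exists as a set in the model and the formula being inducted on is $\Delta^0_0$ in the parameter $i$; everything else is a routine check that the colouring, partial order, and linear order are $\Delta^0_1(A)$-definable and hence exist as sets, and that the two shapes recorded in the key observation really do exhaust the possibilities for a long solution to each specific instance.
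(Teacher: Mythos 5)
Your proof is correct and takes essentially the same route as the paper's: partition $\Nb$ into blocks indexed by a proper $\Sigma^0_1$-cut and build an instance whose long solutions would have to be either trapped inside a single block or transversal to the blocks, both of which are impossible. The paper only treats $\ladsst$ (the cases of $\lrttt$ and $\lcac$ then follow from \cref{lRT_lCAC_lADS}) and uses the dual order---reversing $\le$ inside each block rather than between blocks---but these are cosmetic differences, and your explicit $\Delta^0_0$-induction showing that a block-transversal set cannot have cardinality $\Nb$ just spells out a step the paper leaves implicit.
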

\begin{proof}
The proof for $\lrttt$ was given by Yokoyama in \cite{yokoyama2013onthestrength}, and it uses a transitive colouring, so essentially the same argument works for each of the principles listed above. We describe the argument for the weakest of these principles, namely $\ladsst$.

Working in $\rcas$, suppose that $\iso$ fails,
and that an unbounded set $A$ is enumerated in increasing order as $\{a_i \mid {i\in I}\}$ for $I$ a proper $\Sigma^0_1$-cut. We define a linear order $\preceq$ on $\Nb$ in the following way:
\[
x\preceq y \Leftrightarrow 
  \begin{array}{l}
    \exists i \in I\,(x \in (a_{i-1}, a_i] \land y \in (a_{i-1}, a_i] \land x\geq y)   \\
    \lor~\exists i, j \in I\,(i < j \land x \in (a_{i-1}, a_i] \land y \in (a_{j-1}, a_j]).
  \end{array}
\]
That is, we invert the usual ordering $\le$ 
on each interval $(a_{i-1},a_i]$, but we compare elements from different intervals in the usual way. The order $\preceq$ is a set by $\Delta_1(A)$-comprehension. 

If $S\subseteq \Nb$ is such that any two elements $x, y \in S$ satisfy $x\preceq y \leftrightarrow y \leq x$, then $S$ has to be contained in an interval of the form
$(a_{i-1}, a_{i}]$, so it is finite. On the other hand, if all $x, y \in S$ satisfy $x\preceq y \leftrightarrow x \leq y$, then
$S$ can contain at most one element from each $(a_{i-1}, a_{i}]$, so the cardinality of $S$
is strictly less than $\Nb$.
\end{proof}

\begin{remark}
Note that the ordering $\preceq$
used in the proof of \cref{S_iso} is \emph{stable}, in the sense that for 
every $x$, there are only finitely many
$y$ such that $y \preceq x$. Thus,
$\iso$ is implied already by what one could
call \lq\lq $\ell$-$\sadsst$\rq\rq, the long,
set-solution version of the stable $\ads$ principle $\sads$ from \cite{hirschfeldtShore}.
\end{remark}

To show that the long versions of other principles are logically weak, we introduce an
auxiliary statement, a version of the \emph{grouping principle} $\gp^2_2$ considered
in \cite{py:rt22}. The original grouping
principle is a weakening of $\rt{2}{2}$ stating that, for any 2-colouring of pairs
and any notion of largeness of finite sets (suitably defined), there is an infinite 
sequence of large finite sets $G_0, G_1, \ldots$ (the \emph{groups}) such that for each 
$i < j$ the colouring 
is constant on $G_i \times G_j$. We consider a weaker version tailored to $\rcas$, in which the number of groups can be a proper cut, but the cardinality of individual groups should eventually exceed any finite number.

\begin{definition}
The \emph{growing grouping principle} $\ggp$ states that for every colouring 
$c\colon [\Nb]^2\to 2$ there exists a sequence of finite sets $(G_i)_{i\in I}$ such that 
\begin{enumerate}[(i)]
    \item for every $i<j \in I$ and every $x \in G_i$, $y \in G_j$ it holds that $x < y$,
    \item for every $i<j\in I$, the colouring $c\restric (G_i\times G_j)$ is constant,
    \item for every $i\in I$, $|G_i|\leq|G_{i+1}|$ and $\sup_{i\in I}|G_i|=\Nb$.
\end{enumerate}
\end{definition}

Note that $\rca + \rttt \vdash \ggp$. 
We prove a possibly surprising result on the behaviour of $\ggp$ under $\neg \iso$.  

\begin{lemma} \label{S_WKLggp}
$\wkls + \neg\iso$ implies $\ggp$. 
Moreover, $\ggp$ restricted to transitive colourings is provable in $\rcas + \neg \iso$.
\end{lemma}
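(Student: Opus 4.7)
Both claims will be proven by constructing a grouping indexed by a proper $\Sigma^0_1$-cut $I$ supplied by $\neg\iso$ via \cref{cofinalset}, with cofinal enumeration $A = \{a_i : i \in I\}$ and prescribed sizes $|G_i| = a_i$, so that $\sup_{i \in I}|G_i| = \Nb$ automatically. We fix block boundaries $b_0 < b_1 < \ldots$ with $b_{i+1} - b_i \ge a_i \cdot 2^{a_0 + \ldots + a_{i-1}}$; the right-hand side lies in $\Nb$ for each concrete $i$ by $\exp$.

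For $\wkls + \neg\iso \vdash \ggp$, the approach is a $\wkl$-based compactness argument. Define the tree $T \subseteq 2^{<\Nb}$ whose node at level $b_n$ codes a valid partial grouping $(G_0, \ldots, G_{n-1})$ with $G_j \subseteq [b_j, b_{j+1})$, $|G_j| = a_j$, and $c$ constant on $G_i \times G_j$ for all $i < j < n$; close $T$ under initial segments. The tree is $\Delta_0(c)$-definable, and infinite by an extendibility argument: the induced colouring $y \mapsto (c(x,y))_{x \in G_0 \cup \ldots \cup G_{n-1}}$ on $[b_n, b_{n+1})$ takes at most $2^{a_0+\ldots+a_{n-1}}$ values, so finite pigeonhole (provable in $\rcas$) produces a subset of size $a_n$ in a single class, yielding the required $G_n$. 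Applying $\wkl$ gives an infinite path, decoding to an $I$-indexed grouping.

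For the transitive case in $\rcas + \neg\iso$, we avoid $\wkl$ by exploiting that a transitive $c$ corresponds to a linear order $\preceq$ on $\Nb$ with $c(x,y) = 1 \iff x \prec y$ for $x < y$. The grouping condition then says: for each $i < j$, $G_i$ is entirely $\preceq$-below $G_j$ or entirely $\preceq$-above $G_j$. We construct the groups recursively, taking each $G_i$ to be a $\preceq$-clump (consecutive in the $\preceq$-order of its block $[b_i, b_{i+1})$) of size $a_i$, chosen via finite Erd\H{o}s--Szekeres (provable in $\rcas$). At step $n$, finite pigeonhole on the at most $2^n$ possible $\preceq$-separation patterns of candidate $\preceq$-clumps in $[b_n, b_{n+1})$ with respect to the earlier clumps yields a $G_n$ separated from each previous $G_i$. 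The main obstacle here is to ensure that the joint separation constraints can be met while keeping the construction within $\rcas$'s reach: each step is performed by $\Delta_0$-minimization on explicit $M$-finite witnesses, so the function $i \mapsto G_i$ is $\Delta_1$-definable on $I$ and the sequence is assembled by $\Delta^0_1$-comprehension.
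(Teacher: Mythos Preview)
Your extendibility argument in the $\wkls$ part has a genuine gap. Pigeonholing on the vector $y \mapsto (c(x,y))_{x \in G_0 \cup \ldots \cup G_{n-1}}$ gives you a set $G_n$ on which this vector is constant, say equal to $v$; that is, $c(x,y) = v_x$ for all $y \in G_n$. But for $c$ to be constant on $G_i \times G_n$ you also need $v_x$ to be the same for all $x \in G_i$, and nothing forces the majority vector $v$ to be constant on each $G_i$. With your block sizes the tree can in fact be finite: already at $n=2$, once $G_0,G_1$ are fixed, the elements $y$ of block $2$ for which $c(\cdot,y)\!\restriction\!G_1$ is constant may form an arbitrarily small (possibly empty) set, and choosing $G_0,G_1$ more cleverly does not help, since block $1$ has only size $\approx a_1 \cdot 2^{a_0}$ while a monochromatic $a_1 \times a_2$ bipartite rectangle generally needs both sides to be exponential in $a_1$.

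This is precisely why the paper uses a two-stage construction. In the first stage (``from below''), each interval $(a_{i-1},a_i]$ is thinned with respect to \emph{every} $x \le a_{i-1}$, not merely the elements of earlier groups, yielding $G'_i$ on which $c(x,\cdot)$ is constant for all such $x$. Only then can the second stage (``from above'') proceed by comparing with a single representative $\min G'_j$ from each later block, and it is this second stage that is packaged into an infinite tree to which $\wkl$ is applied. For the transitive case the paper again does both stages; the key extra idea, which your sketch lacks, is that the ``from above'' halving can be made canonical (take the $\preceq$-top or $\preceq$-bottom half rather than the $c$-majority half), and the Chong--Mourad coding lemma (\cref{ChongMourad}) is then used to encode the entire $I$-indexed pattern of top/bottom choices by a single number $s > I$, which lets one compute $G_i = \bigcap_{j=i}^{b} D^i_j$ for a fixed $b > I$ in a $\Delta^0_1$ way. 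Your forward recursion over $i \in I$ does not address either the combinatorial issue (a candidate $\preceq$-clump in block $n$ may be $\preceq$-interleaved with an earlier $G_i$ rather than cleanly above or below it) or the definability issue (a bare $\Sigma^0_1$-recursion along a proper $\Sigma^0_1$-cut is not available in $\rcas$).
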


\begin{remark}
\cref{S_WKLggp} implies in particular that $\ggp$ is $\Pi^1_1$-conservative over $\rcas + \neg \iso$. In contrast, Yokoyama [private communication] has pointed out that $\ggp$ is not arithmetically conservative over $\rca$. This can be seen as follows. It is shown in \cite{py:rt22}*{Theorem 5.7 \& Corollary 5.9} that $\rca$ extended by a statement $\mathsf{GP}(\mathsf{L}_\omega)$ intermediate between $\ggp$ and $\gp^2_2$ proves the principle known as 2-$\mathsf{DNC}$ and, as a consequence, an arithmetical statement $\mathrm{C}\Sigma_2$ unprovable in $\rca$. However, it is clear from the proof of \cite{py:rt22}*{Theorem 5.7} that $\rca + \ggp$
is enough for the argument to go through.
\end{remark}

\begin{proof}[Proof of \cref{S_WKLggp}]
The proof uses the technique of building a grouping by thinning out a family of finite sets
first \lq\lq from below\rq\rq\ and then \lq\lq from above\rq\rq. This method was applied to construct large finite groupings in \cite{ky:ordinal-valued-ramsey}.

Work in $\wkls + \neg\iso$, and assume that $A= \{a_i \mid i\in I\}$ is an unbounded set, where $I$ is a proper $\Sigma^0_1$-cut.
By possibly thinning out $A$ (which can only decrease $I$), we may also assume
that for each $i \in I$, $a_0>2^i$ and 
$|(a_i, a_{i+1}]|\geq a_0a_i2^{a_i}$.

Let $c \colon [\Nb]^2\to 2$. 
We want to obtain a sequence of sets $(G_i)_{i\in I}$ witnessing $\ggp$ such that $G_i\subseteq (a_{i-1}, a_{i}]$ for each $i$. 
We proceed in two main stages.

(1) We stabilize the colour \lq\lq from below\rq\rq. For each $i\in I$, build a finite sequence of finite sets 
$B^i_{-1}\supseteq B^i_0\supseteq\ldots\supseteq B^i_{a_{i-1}}$ 
in the following way. 
Let $B^i_{-1}=(a_{i-1}, a_{i}]$, and for each 
$0 \le x \le a_{i-1}$ let $B^i_{x}=\{y\in B^i_{x-1} \mid c(x, y)=k\}$, where $k\in\{0, 1\}$ is such that 
$|\{y\in B^i_{x-1}\mid c(x,y)=k\}|\geq|\{y\in B^i_{x-1}\mid c(x, y)=1-k\}|$. We can choose for instance $k = 0$ if the two values are equal.
Let $G_i'=B^i_{a_{i-1}}$. 

At this point, for each $i\in I$ and each $x \le a_{i-1}$ the colouring $c$ is constant
on $\{x\} \times G_i'$. Moreover, we have $G'_0 = [0,a_0]$ and $|G'_i| \geq a_0a_i2^{a_i-a_{i-1}-1} \ge a_0a_i$ for each $0 < i \in I$. Note that the sequence 
$(G_i')_{i\in I}$ is $\Delta_1(A)$-definable.

(2) We stabilize the colour \lq\lq from above\rq\rq.
For each $i \in I$, we can construct an infinite sequence of finite sets $G_i' = D^i_i\supseteq D^i_{i+1}\supseteq D^i_{i+2} \supseteq \ldots$
indexed by $i \le j \in I$,
with a single step of the construction
essentially like in stage (1). That is, given 
$j > i$, we let $D^i_j$ be $|\{x\in D^i_{j -1 } \mid c(x,\min G_j')=k\}|$ for that $k$ for which this set is larger. We only need to compare each $x \in D^i_{j-1}$ with one element of $G_j'$,
because we have already arranged for $c$ 
to be constant on $\{x\} \times G_j'$.
Note that for each $i < j \in I$ the colouring $c$ is constant on $D^i_j \times G_j'$. 
Also, each $D^0_j$ is nonempty,
while for $0 < i \le j \in I$ we have 
$|D^i_j| \ge a_0a_i2^{-j} \ge 2a_i$.

Intuitively, we would want to define 
$G_i = \bigcap_{i \le j \in I} D^i_j$, 
but then being a member of $G_i$ might not be
$\Delta^0_1$-definable. However, if we fix $m \in I$ and consider only the sets $\bigcap_{j=i}^m D^i_j$ for $i \le m$,
we obtain a node of length $a_m +1$ in the computable binary tree $T$ defined as follows. A finite 0-1 sequence $\tau$ belongs to $T$ if the largest $m$ such that $\mathrm{lh}(\tau) > a_m$ satisfies 
(if we identify $\tau$ with the finite set it codes): 
\begin{enumerate}[(i)]
    \item $\tau \cap [0,a_m] \subseteq\bigcup^m_{i=0}G'_i$,
    \item for every $i < j \le m$, 
    the colouring $c$ is constant on
    $(G'_i\cap\tau)\times (G'_j\cap\tau)$,
    \item $|\tau\cap G'_i|>a_i$ for every $i \le m$.
\end{enumerate}
The tree $T$ is infinite because for arbitrary $m \in I$ there exists a node in $T$ of length $a_m$, and the set $A=\{a_i \mid i\in I\}$ is unbounded. By $\mathrm{WKL}$ $T$ has an infinite path $G$ and we get the desired grouping 
$(G_i)_{i \in I}$ by taking 
$G_i=G \cap (a_{i-1}, a_{i}]$.

Now assume additionally that $c$ is a transitive colouring. By the argument from the proof of \cref{prop:ads-trans}, we can think of $c$ as given by a linear ordering $\preceq$. The first stage of the construction,
\lq\lq from below\rq\rq, is exactly as before. 
In the \lq\lq from above\rq\rq\ stage, we will make a small change. If we built the sets $D^i_j$ for $c$ as in the previous construction, then, in terms of $\preceq$, we would look at the position of $\min G_j'$ in the $\preceq$-ordering relative
to the elements of $D^i_{j-1}$.
This would split $D^i_{j-1}$ into a \lq\lq top part\rq\rq\ and a \lq\lq bottom part\rq\rq\ with respect to $\preceq$, and we would take whichever of these two parts were larger. Now, 
we will take the $\preceq$-bottom \emph{half}
of $D^i_{j-1}$ if $\min G_j'$ lies above it,
and the top $\preceq$-\emph{half} if it does not. (Do this in a way that includes the $\preceq$-midpoint in case $|D^i_{j-1}|$ is 
odd, so that $|D^i_j|$ is exactly $\lceil |D^i_{j-1}|/2 \rceil$.)

By \cref{ChongMourad}, there is an element
$s > I$ coding the set of those pairs $\langle i, j \rangle$ with $i < j \in I$ for which 
$D^i_j$ is the $\preceq$-top half of $D^i_{j-1}$. We can think of $s$ as a subset of $[0,b]\times[0,b]$
for some $b < \log a_0$. We can use $s$ to generalize the new definition of $D^i_j$ to $i \in I$ and $j \in [i,b]$: $D^i_j$ is the $\preceq$-top half of $D^i_{j-1}$ if $\langle i, j \rangle \in s$, and the $\preceq$-bottom half otherwise. Let $G_i = \bigcap_{j=i}^b D^i_j$.
It is easy to check that $(G_i)_{i \in I}$
is $\Delta^0_1$-definable and that it witnesses
$\ggp$ for $\preceq$.
\end{proof}

\begin{remark} Note that the reason
why the proof of $\ggp$ for transitive colourings does not obviously generalize to arbitrary ones is that in general,
if $i \in I < j$, then it is not clear how to split a subset of $(a_{i-1},a_i]$
into a \lq\lq more red\rq\rq\ and a \lq\lq more blue\rq\rq\ half
with respect to a (nonexistent) element of  $G_j'$. If the colouring
is transitive and given by an ordering $\preceq$, then even though we cannot actually compare the elements of $(a_{i-1},a_i]$ to
a nonexistent element, we can say which ones form the top and bottom half. 
\end{remark}

\begin{theorem}\label{S_ADSGGP}
$\rcas$ proves $\ladssq \leftrightarrow \ads$,
and $\wkls$ proves $\lcrttt \leftrightarrow \crttt$.
\end{theorem}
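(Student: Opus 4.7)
The plan is to dispatch the backward directions by routine thinning, and to handle the forward directions by splitting on $\iso$ and using the growing grouping principle $\ggp$ of \cref{S_WKLggp} to bootstrap from a normal to a long solution. The backward directions are immediate: given a $\preceq$-ascending $\ladssq$-sequence $(s_i)_{i \in \Nb}$ (the descending case is symmetric), $\Delta^0_1$-comprehension yields the unbounded set $S = \{s_j : s_j > s_i \text{ for all } i<j\}$, which is an $\ads$-solution; and a $\lcrttt$-solution is directly an $\crttt$-solution, since cardinality $\Nb$ implies unboundedness.

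For the forward directions, if $\iso$ holds then both implications are trivial, since unbounded sets have cardinality $\Nb$. Assume therefore $\neg\iso$. By \cref{S_WKLggp}, $\ggp$ restricted to transitive colourings is available in $\rcas + \neg\iso$, and full $\ggp$ in $\wkls + \neg\iso$. Apply $\ggp$ to the transitive colouring $c(x,y) = 0 \Leftrightarrow x \prec y$ (for $x<y$) derived from $\preceq$ in the $\ads$ case, or to the given $c$ in the $\crttt$ case. This yields a grouping $(G_i)_{i \in I}$ over a $\Sigma^0_1$-cut $I$ with $\sup_{i \in I}|G_i|= \Nb$, where, as in the proof of \cref{S_WKLggp}, we may arrange $G_i \subseteq (a_{i-1},a_i]$ for an unbounded enumerating set $A = \{a_i : i \in I\}$.

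Next, transfer the problem from $\Nb$ to the unbounded set $A$, exploiting constancy of $c$ on $G_i \times G_j$ for $i<j$: define an order $a_i \preceq^\dagger a_j \Leftrightarrow c(G_i, G_j) = 0$ in the $\ads$ case, or a colouring $c^\dagger(a_i, a_j) = c(G_i,G_j)$ in the $\crttt$ case, both obtained by $\Delta^0_1$-comprehension. By \cref{W_ADSslimInstance}, the hypothesised normal principle applied to $(A, \preceq^\dagger)$ or $(A,c^\dagger)$ yields an unbounded $S^\dagger \subseteq A$, corresponding to an unbounded $S' \subseteq I$, on which $\preceq^\dagger$ is monotone (WLOG $\le$-ascending) or $c^\dagger$ is stable.

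Finally, harvest a long solution from the grouping. For each $n \in \Nb$, let $i_n$ be the $\le$-least element of $S'$ with $|G_{i_n}| \geq n+1$, which exists because $\sup_{i \in I}|G_i| = \Nb$ and $S'$ is unbounded in $I$. Define $s_n$ to be the $(n+1)$-th $\preceq$-smallest element of $G_{i_n}$ in the ascending $\ads$ case (and $\preceq$-largest in the descending case), or the $(n+1)$-th $\le$-smallest element of $G_{i_n}$ in the $\crttt$ case. The map $n \mapsto s_n$ is $\Delta^0_1$-definable from $\preceq$ or $c$, the grouping, and $S'$, so the sequence exists by $\Delta^0_1$-comprehension. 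Strict $\preceq$-monotonicity for $\ads$, or strict $\le$-increase together with $c$-stability for $\crttt$, splits into two cases: when $i_n < i_m$, one uses the cross-group behaviour ensured by $S'$ together with condition (i) of $\ggp$; when $i_n = i_m$, one uses the finite enumeration of $G_{i_n}$. For $\crttt$-stability at $s_n$, pick $m$ such that $i_m$ exceeds both $i_n$ and the stability threshold of $c^\dagger$ at $a_{i_n}$ in $S^\dagger$. The main obstacle, and the reason $\ggp$ is the right tool, is that the index set $I$ is typically a proper cut, so a bare $\ads$- or $\crttt$-solution need not have cardinality $\Nb$; the growing group sizes supply enough material to fill out an $\Nb$-indexed sequence in spite of this.
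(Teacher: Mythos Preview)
Your proposal is correct and follows essentially the same strategy as the paper: split on $\iso$, invoke \cref{S_WKLggp} to get a grouping in the $\neg\iso$ case, apply the normal principle (via \cref{W_ADSslimInstance}) to an instance transferred to a cofinal set indexed by the groups, and harvest a long solution from the selected groups. The only noteworthy difference is in the harvesting step: the paper simply concatenates the groups $G_{i_j}$ (listing each in $\preceq$-monotone order) and observes that the result has length $\Nb$ because $\sup_j |G_{i_j}| = \Nb$, whereas you build an explicit $\Nb$-indexed sequence by sending $n$ to the $(n{+}1)$-th element of the first sufficiently large selected group. Both work; the paper's version is a bit shorter, while yours makes the $\Nb$-indexing more explicit at the cost of having to check that $n \mapsto i_n$ is cofinal in $S'$ (needed for the $\crttt$ stability threshold), which follows from monotonicity of $|G_i|$ since $|G_{i_m}| \to \Nb$ forces $i_m$ to be unbounded in $I$.
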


\begin{proof}
Let us first consider the case of $\ads$.
Clearly, $\ladssq$ implies $\ads$, and
the two principles are equivalent over $\rca$.
So, we only need to prove $\ladssq$ from $\ads$ working in $\rcas + \neg\iso$.

Let $(\Nb, \preceq)$ be an instance of $\ladssq$. By \cref{S_WKLggp}, we can apply $\ggp$ to the colouring given by $\preceq$,
obtaining a sequence of finite sets
$G_0 < G_1 < \ldots < G_i < \ldots$,
where $i\in I$ for some $\Sigma^0_1$-cut $I$. 
By \cref{W_ADSslimInstance}, we
can apply $\ads$ to the order $\preceq$ 
restricted to the set $A = \{\min(G_i)\mid i\in I\}$. Without loss of generality, assume
that this gives us an unbounded set 
$S \subseteq A$ such that for any $x, y \in S$,
$x \preceq y$ iff $x \ge y$. Assume 
$S = \{\min(G_{i_j}) \mid j \in J\}$ for some cut $J \subseteq I$. Now consider
the descending sequence in $\preceq$ defined as follows: first list the elements of $G_{i_0}$ in $\preceq$-descending order, then the elements of $G_{i_1}$ in $\preceq$-descending order, and so on. This sequence can be obtained using $\Delta_1(S, \preceq)$-comprehension,
and it has length $\Nb$, because $S \cof A \cof \Nb$, so $\sup_{j\in J}|G_{i_j}|=\sup_{i\in I}|G_i|=\Nb$. 

A similar argument shows that $\rcas + \ggp$
proves $\crttt \rightarrow \lcrttt$. However,
the instance to which we apply $\ggp$ in that argument is not necessarily transitive,
so \cref{S_WKLggp} only implies 
$\wkls + \neg \iso \vdash \crttt \imp \lcrttt$
and thus $\wkls \vdash \crttt \leftrightarrow \lcrttt$.
\end{proof}

\begin{remark}
There is version of $\ads$, called $\adsc$ in
\cite{AstorDzhafarov}, in which the solution 
is an infinite set $S$ such that either each element of $S$ has only finitely many predecessors or each element of $S$ has only finitely many successors. This principle is known to be equivalent to $\ads$ in $\rca$,
but strictly weaker in terms of Weihrauch reducibility. It is not hard to verify using
the techniques of Section \ref{Sec_Slim}
that the normal version of $\adsc$ is provably in $\rcas$ equivalent to $\ads$. Moreover,
a slight modification of the previous proof shows that the long version of $\adsc$ is also equivalent to $\ads$.
\end{remark}

\cref{S_ADSGGP} allows us to show that $\ladssq$ and $\lcrttt$ are weak principles
in the sense that they are partially conservative over $\rcas$.

\begin{corollary}\label{cor:ladssq-conservative}
Both $\wkls+\ladssq$ and $\wkls + \lcrttt$
follow from $\wkls + \rt{2}{2}$. As a consequence, these theories are $\forall\Pi^0_3$-conservative over $\rcas$ and do not imply $\iso$.
\end{corollary}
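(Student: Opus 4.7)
The plan is to assemble the previously proved equivalences into a short syllogism. First I would observe that by \cref{lem:easy-implications}, $\rcas \vdash \rt{2}{2} \to \ads$ and $\rcas \vdash \rt{2}{2} \to \crttt$. Combined with the equivalence $\rcas \vdash \ads \leftrightarrow \ladssq$ from \cref{S_ADSGGP}, this immediately gives $\wkls + \rt{2}{2} \vdash \ladssq$. Combined with the equivalence $\wkls \vdash \crttt \leftrightarrow \lcrttt$ from the same theorem, it gives $\wkls + \rt{2}{2} \vdash \lcrttt$. Hence both $\wkls + \ladssq$ and $\wkls + \lcrttt$ are subtheories of $\wkls + \rt{2}{2}$.

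For the conservation claim, I would invoke the $\forall\Pi^0_3$-conservativity of $\wkls + \rt{2}{2}$ over $\rcas$, which is available from \cite{kky:ramsey-rca0star}; alternatively, since $\rt{2}{2}$ is provably equivalent to a $\pso$ sentence by \cref{W_Chi} and this sentence is true in any $\omega$-model of $\wkl$ (e.g.\ the standard model), the $\forall\Pi^0_3$-conservativity follows from the remark made in \cref{Sec_Slim} about $\pso$ sentences true in $\omega$-models of $\wkl$. Since any $\forall\Pi^0_3$ consequence of $\wkls + \ladssq$ or $\wkls + \lcrttt$ is also a consequence of $\wkls + \rt{2}{2}$, the desired conservativity over $\rcas$ follows.

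For the final assertion that these theories do not imply $\iso$, I would use the fact that each instance of the $\iso$ scheme, when one universally closes over its free (number and set) parameters, is a $\forall\Pi^0_3$ sentence. Since $\rcas \not \vdash \iso$, there is some instance of $\iso$ not provable in $\rcas$; by the conservation just established, this instance is not provable in $\wkls + \ladssq$ or $\wkls + \lcrttt$ either.

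There is no substantial obstacle: the proof is purely a matter of stitching together \cref{lem:easy-implications}, \cref{S_ADSGGP}, and known conservativity facts for $\rt{2}{2}$. The only point that merits a moment's care is that the conservation result we are appealing to applies to $\wkls + \rt{2}{2}$ rather than just $\rcas + \rt{2}{2}$, but this is exactly what the $\pso$-based formulation in \cref{Sec_Slim} delivers.
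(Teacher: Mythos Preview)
Your proposal is correct and follows essentially the same route as the paper: both combine \cref{lem:easy-implications} with \cref{S_ADSGGP} to get the first claim, and both reduce the conservativity to that of $\wkls + \rt{2}{2}$ via \cite{kky:ramsey-rca0star}.

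Two small points of comparison. First, on the passage from $\rcas + \rt{2}{2}$ to $\wkls + \rt{2}{2}$: the parenthetical remark in \cref{Sec_Slim} about $\pso$ sentences only asserts $\forall\Pi^0_3$-conservativity of $\rcas + \chi$ over $\rcas$, not of $\wkls + \chi$; so your appeal to it does not literally close the gap you flag. The paper handles this by observing directly that the models built in \cite{kky:ramsey-rca0star} are of the form $(I,\codmi)$ for a proper cut $I$ closed under exponentiation, hence satisfy $\wkls$ by \cite{SimpsonSmith}. Second, for ``does not imply $\iso$'' you argue via the complexity of induction instances, whereas the paper instead notes that any theory $\Pi_1$-conservative over $\rcas$ is consistent with $\neg\mathrm{Con}(\id{}{0})$ and so cannot even prove $\mathrm{supexp}$. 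Both arguments are valid; yours is more direct for the stated conclusion, while the paper's yields a slightly stronger fact.
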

\begin{proof}
It is immediate from \cref{S_ADSGGP} and \cref{lem:easy-implications} that both 
$\wkls+\ladssq$ and $\wkls + \lcrttt$
follow from $\wkls + \rttt$.

To prove the $\forall\Pi^0_3$-conservativity
of $\wkls + \rttt$ over $\rcas$, note that
the proof of $\forall\Pi^0_3$-conservativity
of $\rcas + \rt{n}{k}$ over $\rcas$ in
\cite{kky:ramsey-rca0star} in fact shows that
any $\Sigma^0_3$ sentence consistent with
$\rcas$ is satisfied in some model of $\rcas + \rt{n}{k}$ of the form $(I, \codmi)$ for $I$ a proper $\Sigma^0_1$-cut in a model $M \vDash \ido + \exp$. By \cite{SimpsonSmith}*{Theorem 4.8}, any such model $(I, \codmi)$ satisfies $\wkls$ as well.

Of course, each theory that is at least $\Pi_1$-conservative over $\rcas$ 
is consistent with $\neg \mathrm{Con}(\id{}{0})$
and thus cannot imply even $\id{}{0} + \mathrm{supexp}$, where $\mathrm{supexp}$ expresses the totality of the iterated exponential function.
\end{proof}

Our results from Sections \ref{Sec_Slim} and \ref{Sec_Fat} on the relationships between the normal and long versions of $\rttt, \cac, \ads$, and $\crttt$ are summarized in Figure 1. One phenomenon apparent from the figure is that all of the principles considered up to this point either imply $\iso$ or are $\forall\Pi^0_3$-conservative over $\rcas$.

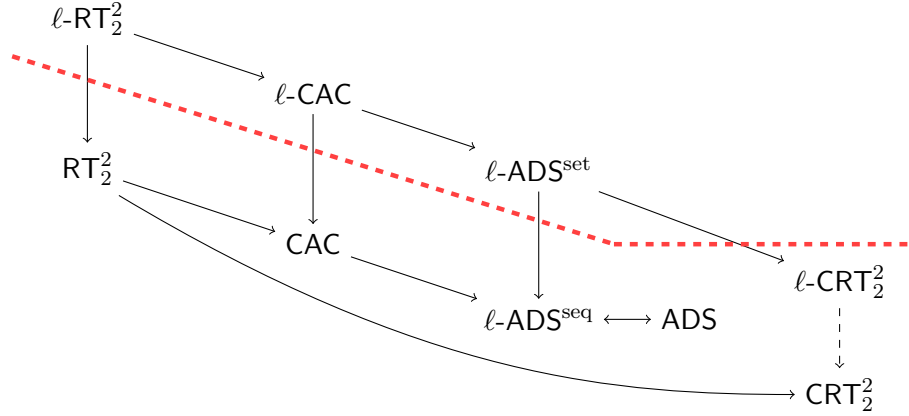
\begin{figure}
%\begin{tikzpicture}
%\node (lrt) at (-6,6)  {$\lrt{2}{2}$};
%\node (rt) at (-6,4)  {$\rt{2}{2}$};
%\node (lcac) at (-3,5)  {$\lcac$};
%\node (cac) at (-3,3)  {$\cac$};
%\node (ladsst) at (0,4)  {$\ladsst$};
%\node (ladssq) at (0,2)  {$\ladssq$};
%\node (ads) at (2,2)  {$\ads$};
%\node (lcrt) at (4,2.5)  {$\lcrttt$};
%\node (crt) at (4,1)  {$\crttt$};

%\draw[->] (lrt) -- (rt);
%\draw[->] (lrt) -- (lcac);
%\draw[->] (rt) -- (cac);
%\draw[->] (rt.320) .. controls (-2.3,1.2) and  (0,1) .. (crt.190);
%\draw[->] (lcac) -- (cac);
%\draw[->] (cac) -- (ladssq);
%\draw[->] (lcac) -- (ladsst);
%\draw[->] (ladsst) -- (ladssq);
%\draw[<->] (ladssq) -- (ads);
%\draw[->] (lcrt.260) -- (crt.100);
%\draw[->,dotted,thick] (crt.60) -- (lcrt.300);
%\draw[->] (ladsst) -- (lcrt);
%\draw[->,dotted,thick] (rt.10)  .. controls (0,2.5) .. (lcrt);
%\draw[->,dotted,thick] (cac.320) .. controls (0.6,1) and (1,1.1) .. (crt.170);
%\draw[->,dotted,thick] (ads) -- (crt);
%\draw[dashed,red!75,line width=0.5mm] (-7,5.5) -- (1,3);
%\draw[dashed,red!75,line width=0.5mm] (1,3) -- (5,3);
%\end{tikzpicture}
\begin{tikzpicture}
\node (lrt) at (-6,6)  {$\lrt{2}{2}$};
\node (rt) at (-6,4)  {$\rt{2}{2}$};
\node (lcac) at (-3,5)  {$\lcac$};
\node (cac) at (-3,3)  {$\cac$};
\node (ladsst) at (0,4)  {$\ladsst$};
\node (ladssq) at (0,2)  {$\ladssq$};
\node (ads) at (2,2)  {$\ads$};
\node (lcrt) at (4,2.5)  {$\lcrttt$};
\node (crt) at (4,1)  {$\crttt$};

\draw[->] (lrt) -- (rt);
\draw[->] (lrt) -- (lcac);
\draw[->] (rt) -- (cac);
\draw[->] (rt.320) .. controls (-1.5,1.2) and (1,1) .. (crt);
\draw[->] (lcac) -- (cac);
\draw[->] (cac) -- (ladssq);
\draw[->] (lcac) -- (ladsst);
\draw[->] (ladsst) -- (ladssq);
\draw[<->] (ladssq) -- (ads);
\draw[->,dashed] (lcrt) -- (crt);
\draw[->] (ladsst) -- (lcrt);
\draw[dashed,red!75,line width=0.6mm] (-7,5.5) -- (1,3);
\draw[dashed,red!75,line width=0.6mm] (1,3) -- (5,3);
\end{tikzpicture}
\caption{Summary of relations between the various versions of $\rt{2}{2}$, $\cac$, $\ads$ and $\crttt$ over $\rcas$. Solid arrows represent implications provable in $\rcas$ that do not provably reverse in $\rcas$. 
The dashed arrow represents an implication for which the reversal is open. 
Also the implications from $\cac$ and $\ads$ to $\crttt$ and from any of $\rttt, \cac, \ads$ to $\lcrttt$ are open. %Missing arrows indicate non-implications except for the implications mentioned in \cref{Q:lcrttt}
%\marta{This wording makes sense if we add a question about $\cac,\ads$ and $\crttt$}. \leszek{Let's be careful here; adding the question, probably to Section 3, might make sense, but there is also stuff about relations $\lcrttt$ that maybe we want to skip} 
All indicated theories above the thick dashed line imply $\iso$, and all 
indicated theories below the line are $\forall\Pi^0_3$-conservative over $\rcas$.}
\end{figure}

The main open problems related to normal versions of our principles concern 
$\crttt$ and have already been stated 
in \cref{Sec_Slim}. Among the long principles,
questions about those that imply $\iso$ move
us back to the traditional realm of reverse mathematics over $\rca$. As for the weaker
long principles, an important matter is to settle the status of $\ggp$.

\begin{question}\label{Q:ggp}
Does $\rcas + \neg \iso$ imply $\ggp$? Is $\ggp$ equivalent to $\wkls$ over $\rcas + \neg \iso$?
\end{question}

A more specialized but related group of problems concerns $\lcrttt$.

\begin{question}\label{Q:lcrttt}
Is $\lcrttt$ equivalent to $\crttt$ over
$\rcas$? Does it follow from $\rcas + \rttt$?
\end{question}

By the argument used to prove \cref{S_ADSGGP}, if $\ggp$ is provable in $\rcas + \neg \iso$,
then both parts of \cref{Q:lcrttt} have a positive answer.

In the context of \cref{Q:ggp}, we mention 
a potentially interesting connection between
$\ggp$ and the long version of the Erd\"{o}s-Moser principle:
over $\rcas + \neg \iso$, $\lemo$ is equivalent to $\emo \land \ggp$. This equivalence implies in particular that $\lemo$ does not prove $\iso$ and is in fact $\forall\Pi^0_3$-conservative over $\rcas$. However, we do not know if
the $\forall\Pi^0_3$-conservative long principles considered earlier also imply $\ggp$.

To prove the equivalence, note that, on the one hand, an argument like the one in \cref{S_ADSGGP} proves $\lemo$ in $\rcas + \ggp +\emo$. Given $c\colon [\Nb]^2 \to 2$,
we can use $\ggp$ to obtain $(G_i)_{i \in I}$ such that $c\restric (G_i\times G_j)$ is constant for each $i < j \in I$, thin out each $G_i$ at most exponentially to obtain $G_i'$ on which $c$ is constant, and then apply $\emo$ to 
$c \restric \{\min(G_{i}'\mid i \in I\}$
in order to find $S = \{\min(G_{i_j}'\mid j \in J\}$ on which $c$ is transitive. Then $\bigcup_{j \in J} G_{i_j}'$ is a set of cardinality $\Nb$ on which $c$ is transitive.  On the other hand,
$\rcas + \neg\iso + \lemo$ implies $\ggp$. Given a colouring $c\colon [\Nb]^2\to 2$, we can apply $\lemo$ to obtain a set $S$ of cardinality $\Nb$ such that $c$ is transitive on $S$. Then \cref{S_WKLggp} applied to
$c\restric [S]^2$ provides a solution to $\ggp$.

%\begin{corollary} \label{S_SummaryCACADS}
%Over $\rcas$, the following hold:
%\begin{enumerate}[label=\roman*)]
%    \item $\ladssq \not \Imp \ladsst$
%    \item $\ads \not \Imp\ladsst$
%    \item $\cac \not\Imp \lcac$
%    \item $\cac \not\Imp \ladsst$
%    \item $\ladsst \not \Imp \cac$
%    \item $\ladssq \not \Imp \cac$
%\end{enumerate}
%\end{corollary}
%\begin{proof}
%The first five items follow immediately from \cref{S_iso}. To check the last item, assume the contrary holds and let $(M, \mc{X}) \vDash \rcas + \ladsst$, so that $(M, \mc{X}) \vDash \cac$ as well. By \cref{S_iso}, $(M, \mc{X}) \vDash \iso$, and since $\cac + \iso \Imp \lcac$, then $(M, \mc{X})\vDash \lcac$, which contradicts the fact that $\ladsst \not \Imp \lcac$.
%\end{proof}

%\subsection{Different strength of the various versions of $\ads$}

%\begin{theorem}Over $\wkls$, $\adsst$ implies $\ladssq$, $\ladsc$ and $\lcrttt$ \end{theorem}

%The above will be an ingredient of the proof of 3.4.

%%%%%%%%%%%%%%%%%%%%%%%%%%%%%%%%%%%%%%%%%%%%%%%%%%%%%%%%%%

\section{The curious case of $\coh$}\label{sec:coh}

In the final section of the paper, we consider the behaviour over $\rcas$ of the cohesion
principle $\coh$. Recall that a set  $C \subseteq \Nb$ is \emph{cohesive} for a sequence $(R_n)_{n \in \Nb}$ of subsets
of $\Nb$ if, for each $n \in N$, either all but finitely many elements of $C$ belong to $R_n$
or all but finitely many elements of $C$ belong to $\Nb \setminus R_n$. 
We write $C \subseteq^* R_n$ in the former case
and $C \subseteq^* \overline{R_n}$ in the latter.%\todo{should decide whether we allow the bar notation for complement, and then have some consistency about this. \marta{I think we are consistent within Sec 5. We do not use the bar notation anywhere else. We use the $\setminus$ notation for complement, and not the bar notation, few times in the previous sections, but in that context (see for example \cref{W_Cut}) there are both complements with respect to $M$ and to $I$, so I think it is better to leave $\setminus$ in order to underline the set with respect to which we are considering the complement.}}

\begin{description}[rightmargin=10mm]
\item[$\coh$]\itshape For each sequence $(R_n)_{n \in \Nb}$ of subsets of $\Nb$, there exists an unbounded set $C$ which is cohesive for $(R_n)_{n \in \Nb}$.
\item[$\lcoh$]\itshape For each sequence $(R_n)_{n \in \Nb}$ of subsets of $\Nb$, there exists a set $C$ of cardinality $\Nb$ which is cohesive for $(R_n)_{n \in \Nb}$.
\end{description}

Belanger \cite{Belanger} asked whether
$\coh$ is $\Pi^1_1$-conservative over
$\rcas$. A negative answer to this question
follows from the results of Section \ref{Sec_Slim}. This is because $\coh$ implies
$\crttt$, and the implication remains provable in $\rcas$: for a colouring $c \colon [\Nb]^2 \to 2$, any set $C$ that is cohesive 
for the sequence $(\{y \mid c(n,y)=1\})_{n \in \Nb}$ is also stable for $c$. Thus, \cref{cor:crttt-non-conservative} immediately implies the following result.

\begin{corollary}
$\rcas + \coh$ is not $\Pi_5$-conservative over
$\rcas$.
\end{corollary}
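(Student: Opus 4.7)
The plan is to piggyback directly on the nonconservation result already established for $\crttt$, namely \cref{cor:crttt-non-conservative}, by verifying that the derivation of $\crttt$ from $\coh$ goes through provably in $\rcas$. Since conservation over $\rcas$ is inherited downwards along provable implications, this will immediately give the desired conclusion for $\coh$.

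Concretely, I would first verify in $\rcas$ that $\coh$ implies $\crttt$, following exactly the standard argument indicated in the preceding paragraph of the excerpt. Given a colouring $c \colon [\Nb]^2 \to 2$, form the sequence $(R_n)_{n \in \Nb}$ defined by $R_n = \{y : c(n,y) = 1\}$. This sequence exists as a set by $\Delta^0_1$-comprehension from $c$. Applying $\coh$ yields an unbounded $C \subseteq \Nb$ that is cohesive for $(R_n)_{n \in \Nb}$, meaning that for every $n \in \Nb$ either $C \subseteq^* R_n$ or $C \subseteq^* \overline{R_n}$. Unpacking the definitions, for each $x \in C$ there exists $y \in C$ such that $c(x,z)$ takes the same value for every $z \in C$ with $z \ge y$; hence $C$ is a witness for $\crttt$ applied to $c$. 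Nothing in this reduction uses any induction beyond $\Delta^0_1$, so it is available in $\rcas$.

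With this reduction in hand, the proof concludes in one line: if $\rcas + \coh$ were $\Pi_5$-conservative over $\rcas$, then, since $\rcas + \coh \vdash \crttt$, so would be $\rcas + \crttt$, contradicting \cref{cor:crttt-non-conservative}.

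I do not foresee a genuine obstacle here; the only thing to be careful about is confirming that the sequence $(R_n)_{n \in \Nb}$ can indeed be formed as a set in $\rcas$ (which is immediate by $\Delta^0_1$-comprehension applied to the parameter $c$) and that the translation between \textquotedblleft cohesive\textquotedblright\ and \textquotedblleft stable\textquotedblright\ is purely definitional, so no hidden use of $\iso$ enters. Consequently the entire argument is just a two-step chain: (i) reduction $\coh \to \crttt$ in $\rcas$, (ii) invocation of \cref{cor:crttt-non-conservative}.
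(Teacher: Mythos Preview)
Your proposal is correct and follows essentially the same approach as the paper: the paper also derives the corollary by noting that $\coh$ implies $\crttt$ provably in $\rcas$ (via the sequence $R_n = \{y \mid c(n,y)=1\}$) and then invoking \cref{cor:crttt-non-conservative}.
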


%$\crttt$ and $\lcrttt$ admits an equivalent formulation in terms of colouring. Namely, for each colouring $c\colon [\Nb]^2\to 2$ there exists an unbounded or of universe cardinality set $C$ such that $c \restric C$ is stable, i.e. for each $x \in C$ there exists $z$ such that either $c(x,y)=1$ for each $y>z$, or $c(x,y)=0$ for each $y>z$. To see that the former formulation implies the latter consider the sequence $\la S_n \subseteq \Nb \mid n \in \Nb \ra$ such that $S_n =\{y \mid c(n,y)=1\}$. For the reverse implication let $c(n,y)= 1$ if and only if $y \in S_n$.

Of course, $\lcoh$ implies $\lcrttt$ over $\rcas$ in an analogous way. Below we focus on $\coh$, as we have no results to report on $\lcoh$ beyond immediate consequences of the easy implications from $\lcoh$ to $\coh$ and to $\lcrttt$. 

In terms of our classification of Ramsey-theoretic statements
into normal and long principles, $\coh$ has some aspects of both. On the one hand, the solution $C$ is only required to be unbounded but not to have cardinality $\Nb$. On the other hand, $C$ is required to behave in a certain way with respect to \emph{each} element of the sequence $(R_n)_{n \in \Nb}$, which obviously has length $\Nb$. We will show that the latter
feature of $\coh$ has an interesting consequence: the well-known implication from $\rt{2}{2}$ to $\coh$ \cite{CholakJockuschSlaman}\footnote{The proof of $\rt{2}{2} \imp \coh$ given in \cite{CholakJockuschSlaman} actually requires $\ist$ but Mileti \cite{MiletiPhD} gave another proof which goes through in $\rca$.}
is not provable over $\rcas$. \emph{A fortiori}, this means that neither the implications from $\cac$ and $\ads$ to $\coh$ known to hold over $\rca$ nor the equivalence between $\coh$ and $\crttt$ known to hold over $\rca + \bs{0}{2}$ \cite{hirschfeldtShore} are provable in $\rcas$.

%We prove that this is no more the case over $\rcas$, since $\rt{2}{2}$ do not imply $\coh$ over the weaker base theory. Still, the natural split of $\rt{2}{2}$ into $\crttt$ and stable Ramsey's theorem for pairs, which is obtained using the second formulation of $\crttt$ given above, holds trivially over $\rcas$ as well.

%In \cite[Lemma 7.11]{CholakJockuschSlaman} Cholak, Jockusch and Slaman proved that $\rt{2}{2}$ splits into $\coh$ and  stable Ramsey's theorem for pairs, $\srttt$. 

%Over $\rca$, the relation between $\coh$, $\crttt$ and the principles studied in the previous sections were clarified in \cite{hirschfeldtShore}. In fact, Hirschfeldt and Shore showed that $\ads$, and so $\cac$, implies $\coh$ (see \cite[Proposition 2.10]{hirschfeldtShore}), but the reverse implication does not hold (see \cite[Corollary 2.22]{hirschfeldtShore}). Moreover, over $\bst$, $\crttt$ implies $\coh$ (\cite[Proposition 4.8]{hirschfeldtShore})\todo{\kasia{Do you know what happens if $\bst$ fails? Is it really a strict implication over $\iso+\neg\bst$?} \marta{Leszek confirms that it is open}}.

%We prove that the picture is very different over $\rcas$. In fact,  over $\rcas$ $\rt{2}{2}$ does not imply $\coh$. Consequently,  nor $\cac$ nor any version of $\ads$ implies $\coh$ over $\rcas$.

To prove that the implication
$\rttt \imp \coh$ breaks down over
$\rcas$, we will show that, in contrast
to all the ``normal'' Ramsey-theoretic principles considered in Section \ref{Sec_Slim}, $\coh$ is never computably 
true, i.e.~it never holds in a model
of the form $(M,\Delta_1\text{-}\mathrm{Def}(M))$. We will prove this by means of a detour through what is called the $\Sigma^0_2$-separation principle in \cite{Belanger}.

\begin{description}[labelwidth=25mm,rightmargin=15mm]
\item[$\Sigma^0_2$-$\mathrm{separation}$] \itshape For every two disjoint $\Sigma^0_2$-sets $A_0$, $A_1$ \\ there exists a $\Delta^0_2$-set $B$ such that $A_0\subseteq B$ and $A_1\subseteq\overline{B}$. 
\end{description}

It was shown in \cite{Belanger} that $\coh$ is equivalent to  $\Sigma^0_2$-separation
over $\rca + \bs{0}{2}$ and that the implication from $\coh$ to $\Sigma^0_2$-separation works over $\rca$.
Below, we verify that this implication
remains valid over $\rcas$.
On the other hand, we show that $\mathrm{B}\Sigma_1+\exp$ is enough to 
prove the existence of two disjoint lightface $\Sigma_2$-sets that cannot be separated by a $\Delta_2$-set.
That is the same thing as saying that in any structure $M \vDash \bs{}{1}+\exp$, the second-orded universe consisting exclusively of the $\Delta_1$-definable sets satisfies the negation of the $\Sigma^0_2$-separation principle and hence also $\neg \coh$. 

%Notice that the lightface version of $\Sigma^0_2$-separation is false and we will make use of this fact later in the proof.

\begin{lemma}\label{lem:coh-sep}
$\rcas$ proves that $\coh$ implies $\Sigma^0_2$-$\mathrm{separation}$.
\end{lemma}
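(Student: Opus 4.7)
The plan is to mimic the standard $\rca$ proof of $\coh \imp \Sigma^0_2$-separation, being careful that each step survives with only $\Delta^0_1$-comprehension and $\bs{}{1}$. Start with disjoint $\Sigma^0_2$-sets, writing $A_i = \{n : \exists y \, \forall z \, \varphi_i(n,y,z)\}$ for $i \in \{0,1\}$, where the $\varphi_i$ are $\Delta^0_0$ formulas (possibly with set parameters). For each $n,s$ let $y^{[s]}_i(n)$ be the least $y \le s$ with $\forall z \le s\,\varphi_i(n,y,z)$, and $s+1$ if no such $y$ exists; this is $\Delta^0_0$ in $(n,s)$. Then encode a ``race'' between the two sides by setting
\[R_n = \{s : y^{[s]}_0(n) < y^{[s]}_1(n)\},\]
which yields a sequence $(R_n)_{n \in \Nb} \in \mc{X}$ by $\Delta^0_1$-comprehension. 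Apply $\coh$ to produce an unbounded $C$ cohesive for $(R_n)$, and take the candidate separator to be
\[B = \{n : C \subseteq^* R_n\}.\]

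To verify $A_0 \subseteq B$, I would fix $n \in A_0$ together with a witness $y_0$, so that $y^{[s]}_0(n) \le y_0$ whenever $s \ge y_0$. Since $n \notin A_1$ by disjointness, the $\Sigma^0_1$ statement $\exists z\,\neg\varphi_1(n,y,z)$ holds for every $y$, and $\bs{}{1}$ applied over the finite range $y \le y_0$ yields a single bound $Z$ refuting each such $y$ by stage $Z$. For $s \ge \max(y_0,Z)$ this forces $y^{[s]}_1(n) > y_0 \ge y^{[s]}_0(n)$, so $s \in R_n$ and $R_n$ is cofinite; hence $C \subseteq^* R_n$ and $n \in B$. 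The symmetric argument shows that if $n \in A_1$ then $R_n^c$ is cofinite, so $C \subseteq^* \overline{R_n}$ and $n \notin B$.

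The main obstacle — and the only place where cohesion is actually used — is showing that $B$ is a $\Delta^0_2$-set, not merely a $\Sigma^0_2$-set. The direct $\Sigma^0_2$-definition is the unfolding $\exists s_0 \, \forall s \, [(s \in C \land s \ge s_0) \imp s \in R_n]$. Its naive negation is only $\Pi^0_2$, but cohesion upgrades this: by definition of $C$ being cohesive for $(R_n)$ together with $C$ being unbounded, $\neg(C \subseteq^* R_n)$ is equivalent to $C \subseteq^* \overline{R_n}$, i.e.\ to $\exists s_0 \, \forall s\, [(s \in C \land s \ge s_0) \imp s \notin R_n]$, which is again $\Sigma^0_2$. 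Negating this $\Sigma^0_2$ formula gives a $\Pi^0_2$-definition of $B$ equivalent (provably in $\rcas + \coh$) to the $\Sigma^0_2$-definition, so $B$ is $\Delta^0_2$ as required. The delicate point is just to check that this ``flip'' truly uses nothing beyond $\Delta^0_1$-comprehension, $\bs{}{1}$, and the cohesion hypothesis itself — no $\Sigma^0_1$-induction and no $\bs{0}{2}$.
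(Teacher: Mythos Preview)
Your proof is correct and takes essentially the same approach as the paper: build a witness-comparison race $(R_n)_{n\in\Nb}$, apply $\coh$ to obtain an unbounded cohesive $C$, set $B=\{n: C\subseteq^* R_n\}$, and use cohesion to certify that $B$ is $\Delta^0_2$. The paper works instead in the dual formulation (two $\Pi^0_2$-sets $A_0,A_1$ covering $\Nb$, seeking $B\subseteq A_0$ and $\overline B\subseteq A_1$) and defines its race by comparing the least bounds $w$ on $z$-witnesses rather than the least unrefuted $y$ at stage $s$; but the architecture is identical, and your use of $\bso$ to force $R_n$ cofinite when $n\in A_0$ is exactly the dual of the paper's one-directional property that $R_n$ infinite implies $n\in A_0$.
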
 
\begin{proof}
We will follow the structure of the proof in $\rca$ described in \cite{Belanger} (which is based on \cite{jockusch-stephan}), pointing out where we have to depart from it. We work in $\rcas + \coh$ and prove the dual formulation of $\Sigma^0_2$-separation: if $A_0$ and $A_1$ are $\Pi^0_2$ sets such that $A_0 \cup A_1 = \Nb$, then there exists a $\Delta^0_2$-set $B$ such that $B \subseteq A_0$ and $\overline B \subseteq A_1.$

Assume that:
\begin{align*}
A_0 = & \{x \mid \forall y\, \exists z\, \theta_0(x, y, z)\}, \\    
A_1 = & \{x \mid \forall y\, \exists z\, \theta_1(x, y, z)\},
\end{align*}
where $\theta_0$, $\theta_1$ are $\Delta^0_0$, and for each $n \in \Nb$ it holds that $n\in A_0$ or $n\in A_1$. 

The argument in $\rca$ would now make use of a $\Delta^0_1$-definable function 
$f \colon  \Nb \times \Nb \to 2$ 
such that for every $n$, 
\[
\{s \mid f(n, s)=i\} \textrm{ is infinite iff } n \in A_i.
\]
It seems unclear whether we can have access to such a function in $\rcas$. However, we can use a witness comparison argument to find a $\Delta^0_1$-definable  $f \colon  \Nb \times \Nb \to 2$ 
such that for every $n$, 
\[
\textrm{if } \{s \mid f(n, s)=i\} \textrm{ is infinite, then } n\in A_i.
\]
Namely, for every $n$ at least one of $\forall y\, \exists z\, \theta_0(n, y, z)$ and $\forall y\, \exists z\, \theta_1(n, y, z)$ holds.
So, by $\bso$, for every $n$ and $s$ there must exist some $w_0$ such that 
$\forall y\!\le\!s\, \exists z\!\le\!w_0\, \theta_0(n, y, z)$
or some $w_1$ such that 
$\forall y\!\le\!s \, \exists z\!\le\!w_1\, \theta_1(n, y, z)$. Define $f(n, s) = 0$ 
if the smallest such $w_0$ is at most equal to the smallest such $w_1$, and $f(n,s) = 1$ otherwise.

Now consider the $\Delta^0_1$-definable  sequence of sets $(R_n)_{n\in \Nb}$ where $R_n=\{s \mid f(n, s) = 0\}$. Let $C$ be a cohesive set for this sequence. Notice that if $C\subseteq^*R_n$, then $R_n$ is infinite and hence $n \in A_0$, and analogously if $C\subseteq^*\overline{R}_n$ then $n \in A_1$. 

Let
\[
B=\{n \mid \exists k\,\forall \ell \!\ge\! k \,(\ell \in C \imp \ell \in R_n)\}.
\]
Since $C$ is cohesive for $(R_n)_{n \in \Nb}$, both $B$ and its complement are $\Sigma^0_2$-definable. 
Moreover, it follows from the construction that if $n \in B$ then $n \in A_0$ and if $n \notin B$ then $n \in A_1$.
\end{proof}

\begin{lemma}\label{delta-2-nonsep}
$\bs{}{1}+\exp$ proves that there exist two disjoint $\Sigma_2$-sets that cannot be separated by a $\Delta_2$-set.
\end{lemma}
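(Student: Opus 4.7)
The plan is to adapt the classical proof that there exist disjoint $\Sigma_1$-sets that are not $\Delta_1$-separable, shifting the argument up one level in the arithmetical hierarchy. The key fact available already in $\id{}{0}+\exp$ is the universal $\Sigma_1$-predicate $\mathrm{Sat}_1$; from it one obtains in $\bs{}{1}+\exp$ a universal $\Sigma_2$-predicate $\mathrm{Sat}_2$, and hence a canonical enumeration $(\phi_e)_{e\in\Nb}$ of all $\Sigma_2$-formulas with two free variables in a fixed prenex form $\phi_e(x,y)\equiv \exists u\,\forall v\,\theta_e(u,v,x,y)$, where each $\theta_e$ is bounded.

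Using this enumeration I would define the two target sets by a witness-race:
\begin{align*}
A_0 &= \{e : \exists u\,(\forall v\,\theta_e(u,v,e,0)\,\wedge\,\forall u'\!\le\!u\,\exists v'\,\neg\theta_e(u',v',e,1))\},\\
A_1 &= \{e : \exists u\,(\forall v\,\theta_e(u,v,e,1)\,\wedge\,\forall u'\!<\!u\,\exists v'\,\neg\theta_e(u',v',e,0))\}.
\end{align*}
Both $A_0$ and $A_1$ are $\Sigma_2$, and the asymmetry between $\le$ and $<$ in the inner clauses forces them to be disjoint. Suppose now, towards a contradiction, that some $\Delta_2$-set $B$ satisfies $A_0\subseteq B$ and $A_1\subseteq\overline{B}$. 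Since $\chi_B$ is $\Delta_2$ and total, its graph can be put into the prenex form above for some index $e_0$, in the sense that $\phi_{e_0}(x,\chi_B(x))$ holds for every $x$ while $\phi_{e_0}(x,1-\chi_B(x))$ fails for every $x$. Evaluating at $x=e_0$: whatever value $\chi_B(e_0)\in\{0,1\}$ takes, the witness-race for $e_0$ ends in favour of $A_{\chi_B(e_0)}$, because the competing side has no witness at all. This places $e_0\in A_{\chi_B(e_0)}$ and hence forces $\chi_B(e_0)=1-\chi_B(e_0)$, a contradiction.

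The main delicacy of the argument is the step of producing a single index $e_0$ whose associated formula $\phi_{e_0}$ correctly codes the graph of $\chi_B$. This amounts to massaging the given $\Sigma_2$- and $\Pi_2$-definitions of $B$ into a single $\Sigma_2$-graph formula of the prescribed shape, which is a routine manipulation of prenex quantifiers available in $\bs{}{1}+\exp$ without any appeal to $\iso$. Once this normalization is in place, the diagonalization itself is entirely elementary.
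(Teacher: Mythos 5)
Your proof is correct, but it takes a genuinely different route from the one in the paper. The paper works recursion-theoretically: it defines $A_i = \{e : \Phi^{0'}_e(e) = i\}$ using Turing functionals relative to $0'$ as formalized by Chong and Yang, and the key step---turning the hypothetical $\Delta_2$ separator $B$ into an index to diagonalize against---is discharged by citing their result that, provably in $\bs{}{1}+\exp$, every $\Delta_2$-set is weakly recursive in $0'$. You instead run a purely syntactic diagonalization: a universal $\Sigma_2$-predicate in prenex form, a witness-comparison definition of $A_0, A_1$ (the $\le$ versus $<$ asymmetry correctly yields disjointness, and the sets are $\Sigma_2$ because the clause $\forall u'\!\le\!u\,\exists v'\,\neg\theta$ collapses to $\Sigma_1$ by $\bs{}{1}$), and a direct normalization of the graph $(y=1\wedge\sigma(x))\vee(y=0\wedge\neg\pi(x))$ of $\chi_B$ into the prescribed $\exists\forall$ shape, which indeed needs nothing beyond prenex manipulations and pairing. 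Your version is more self-contained, avoiding the external formalization of oracle computation, at the price of carrying out the truth-predicate and normal-form bookkeeping by hand; the paper's version offloads exactly that bookkeeping to the Chong--Yang machinery and keeps the lemma aligned with the recursion-theoretic framing used elsewhere. One small point worth making explicit in your write-up: the separating $\Delta_2$-set should be allowed to contain first-order parameters (this is what the application to models of the form $(M,\Delta^0_1\text{-}\mathrm{Def}(M,A))$ requires after relativization), so the enumeration $(\phi_e)_{e\in\Nb}$ must range over possibly nonstandard indices coding parameters---which your indexing over all of $\Nb$ accommodates, just as the paper's indexing of functionals does.
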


\begin{proof}
We verify that an essentially standard proof of the existence of $\Delta_2$-inseparable disjoint $\Sigma_2$-sets goes through in $\bs{}{1} + \exp$. The recursion-theoretic facts and notions needed for the proof to work were formalized within $\bs{}{1} + \exp$ in \cite{chong-yang:jump-cut}.

A \emph{Turing functional} $\Phi$ is a $\Sigma_1$-set of tuples $\langle x,y, P, N \rangle$, where $x,y \in \Nb$ and $P, N$ are disjoint finite sets. Turing functionals are constrained to be well-defined in the sense that for fixed $x, P, N$ there is at most one $y$ such that $\langle x,y, P, N \rangle\in\Phi$, and to be monotone in the sense that increasing $P$ or $N$ preserves membership in $\Phi$. Given a Turing functional $\Phi$, we say that $\Phi^{0'}(x) = y$ if there exist $P \subseteq 0'$ and $N \subseteq \overline{0'}$ such that $\langle x,y, P, N \rangle \in \Phi$. 

Work in $\bs{}{1} + \exp$, and let $(\Phi_e)_{e \in \Nb}$ be an effective listing of all Turing functionals. Let $A_0$ be the $\Sigma_2$-set $\{e \in \Nb: \Phi^{0'}_e(e) = 0 \}$, and let $A_1$ be the $\Sigma_2$-set $\{e \in \Nb: \Phi^{0'}_e(e) = 1\}$. Clearly, $A_0$ and $A_1$ are disjoint. We claim that they cannot be separated by a $\Delta_2$-set.

Suppose that $B$ is a $\Delta_2$-set such that
$A_0 \subseteq B$ and $A_1 \subseteq \overline B$. By \cite{chong-yang:jump-cut}*{Corollary 3.1}, provably in $\bs{}{1} + \exp$ the $\Delta_2$-set $B$ is \emph{weakly recursive} in $0'$ in the following sense: there is some Turing functional $\Phi_{e_0}$ such that for every $x$, if $x \in B$ then $\Phi^{0'}_{e_0}(x) = 1$, and if $x \notin B$ then $\Phi^{0'}_{e_0}(x) = 0$. By the definition of $A_0$ and $A_1$, this implies that $\Phi^{0'}_{e_0}(e_0) = 0$ iff 
$\Phi^{0'}_{e_0}(e_0) = 1$, which is a contradiction because $\Phi^{0'}_{e_0}$ is defined on every input and takes 0/1 values.
\end{proof}

\begin{theorem}\label{coh-computably-false}
Any model $(M,\Delta^0_1\text{-}\mathrm{Def}(M,A)) \vDash \rcas$, where $A \subseteq M$, satisfies $\neg \coh$. 
\end{theorem}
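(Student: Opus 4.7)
The plan is to derive $\neg\coh$ from the combination of \cref{lem:coh-sep} and \cref{delta-2-nonsep}, noting that the second lemma relativizes to an arbitrary set parameter $A$.

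Suppose, for contradiction, that $(M, \Delta^0_1\text{-}\mathrm{Def}(M,A)) \vDash \coh$. Then, by \cref{lem:coh-sep}, this model also satisfies $\Sigma^0_2$-separation. The key observation is that in a model of the form $(M, \Delta^0_1\text{-}\mathrm{Def}(M,A))$, every set in the second-order universe is $\Delta_1(A)$-definable, so unfolding parameters shows that every $\Sigma^0_2$-formula (respectively, $\Delta^0_2$-formula) with parameters from $\Delta^0_1\text{-}\mathrm{Def}(M,A)$ is equivalent to a $\Sigma_2(A)$-formula (respectively, $\Delta_2(A)$-formula). In particular, the $\Sigma^0_2$- and $\Delta^0_2$-sets of the model coincide with the $\Sigma_2(A)$- and $\Delta_2(A)$-sets of $M$.

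Since $(M, \Delta^0_1\text{-}\mathrm{Def}(M,A)) \vDash \rcas$, we have $(M,A) \vDash \bs{}{1}(A) + \exp$. I would then invoke the relativized version of \cref{delta-2-nonsep} (the proof goes through verbatim with a parameter, replacing $0'$ by the jump of $A$ and $\Sigma_2$ by $\Sigma_2(A)$) to obtain two disjoint $\Sigma_2(A)$-definable subsets of $M$ that cannot be separated by any $\Delta_2(A)$-definable set. By the equivalence of definability classes noted above, this translates into two disjoint $\Sigma^0_2$-sets of the model that cannot be separated by any $\Delta^0_2$-set of the model. This contradicts the $\Sigma^0_2$-separation principle, and hence $\coh$ must fail in $(M, \Delta^0_1\text{-}\mathrm{Def}(M,A))$.

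Both lemmas have already been established, so there is no substantial new obstacle; the only point requiring a moment of care is the correspondence between parametrized lightface classes and boldface $\Sigma^0_n$/$\Delta^0_n$ classes in a model whose second-order part consists exactly of $\Delta_1(A)$-definable sets, which is straightforward once explicitly stated.
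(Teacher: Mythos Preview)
Your proposal is correct and follows essentially the same approach as the paper: assume $\coh$, deduce $\Sigma^0_2$-separation via \cref{lem:coh-sep}, then contradict the relativized form of \cref{delta-2-nonsep} using the identification of $\Sigma^0_2$/$\Delta^0_2$-sets in $(M,\Delta^0_1\text{-}\mathrm{Def}(M,A))$ with $\Sigma_2(A)$/$\Delta_2(A)$-definable sets. You even make explicit the point that $(M,A)\vDash\bs{}{1}(A)+\exp$, which the paper leaves implicit.
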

\begin{proof}
This is an immediate consequence of \cref{lem:coh-sep} and \cref{delta-2-nonsep} relativized to $A$. \cref{lem:coh-sep} says that if the structure $(M,\Delta^0_1\text{-}\mathrm{Def}(M,A))$ satisfied $\coh$, then it would also satisfy the $\Sigma^0_2$-separation principle. 
The latter would contradict \cref{delta-2-nonsep}, because in
$(M,\Delta^0_1\text{-}\mathrm{Def}(M,A))$ the $\Sigma^0_2$-sets are exactly the $\Sigma_2(A)$-definable sets and the
$\Delta^0_2$-sets are exactly the $\Delta_2(A)$-definable sets.
\end{proof}

\begin{corollary}\label{rtttdoesnotimplycoh}
$\rcas+\rttt$ does not imply $\coh$.
\end{corollary}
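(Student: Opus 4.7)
The plan is to construct an explicit model of $\rcas + \rttt + \neg \coh$, which immediately gives the corollary. Two ingredients do essentially all the work: \cref{coh-computably-false}, which says $\coh$ always fails in a structure of the form $(M, \Delta^0_1\text{-}\mathrm{Def}(M, A))$, and \cref{rtoncuts}, which says $\rttt$ transfers between a model $(M, \mc{X})$ and the restriction $(M, \Delta^0_1\text{-}\mathrm{Def}(M, A))$ whenever $\is{}{1}(A)$ fails. The corollary is then essentially the observation that these two "computable truth" behaviours are incompatible.

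First I would produce a model $(N, \mc{Y}) \vDash \rcas + \rttt + \neg \iso$. One natural construction is to take a countable $(M_0, \mc{X}_0) \vDash \wkl + \rttt$, apply \cref{ModelExt} to get an end-extension $K \vDash \bs{}{1} + \exp$ in which $M_0$ is a proper $\Sigma^0_1$-cut and $\cod(K/M_0) = \mc{X}_0$, and set $(N, \mc{Y}) = (K, \Delta_1\text{-}\mathrm{Def}(K))$. This is a model of $\rcas$, it satisfies $\neg \iso$ because $M_0$ is a proper $\Sigma^0_1$-cut, and by \cref{rtoncuts} applied to that cut it satisfies $\rttt$ (since $(M_0, \mc{X}_0) \vDash \rttt$).

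Now pick any $A \in \mc{Y}$ with $(N, A) \vDash \neg \is{}{1}(A)$; such an $A$ exists because $\iso$ fails in $(N, \mc{Y})$. The structure $(N, \Delta^0_1\text{-}\mathrm{Def}(N, A))$ is a model of $\rcas$ (as $(N, A) \vDash \bs{}{1}(A) + \exp$), and by \cref{rtoncuts} it still satisfies $\rttt$. On the other hand, \cref{coh-computably-false} applied directly to this structure yields $\neg \coh$. Thus $(N, \Delta^0_1\text{-}\mathrm{Def}(N, A)) \vDash \rcas + \rttt + \neg\coh$, so $\rcas + \rttt \nvdash \coh$.

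There is no serious obstacle: everything nontrivial has been done in \cref{rtoncuts} and \cref{coh-computably-false}, and the proof is just their combination. The only point where a little care is needed is ensuring the starting model $(N, \mc{Y})$ actually satisfies $\neg \iso$ — which is automatic in the construction above, since the Ramsey-type transfer result of \cref{rtoncuts} operates at the level of a \emph{proper} $\Sigma^0_1$-cut.
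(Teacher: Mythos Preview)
Your proof is correct and follows essentially the same approach as the paper: both combine \cref{coh-computably-false} with \cref{rtoncuts}/\cref{Computably_true} to produce a model of $\rcas + \rttt$ of the form $(M, \Delta^0_1\text{-}\mathrm{Def}(M,A))$, in which $\coh$ must fail. The paper simply cites \cite{kky:ramsey-rca0star} for the existence of a model of $\rcas + \rttt + \neg\iso$ rather than building one via \cref{ModelExt}, and note that your intermediate structure $(K, \Delta_1\text{-}\mathrm{Def}(K))$ already has the required form, so the second pass to $(N, \Delta^0_1\text{-}\mathrm{Def}(N,A))$ is unnecessary.
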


\begin{proof}
By \cref{coh-computably-false}, it is enough to note that there exists a model of
$\rcas + \rttt$ of the form
$(M,\Delta^0_1\text{-}\mathrm{Def}(M,A))$ for some $A \subseteq M$. The existence of such a model follows from the existence of a model of $\rcas + \rttt + \neg \iso$ \cite{kky:ramsey-rca0star} 
and \cref{Computably_true}.
%Take some countable $\omega$-model $(\mathbb{N}, \mathcal{X})\vDash\wkl+\rttt$. By the classical results about models of $\wkl$ we can end-extend it to a model of $\iso$ and trim it to a model $M\vDash\mathrm{B}\Sigma_1+\exp$ in which $\omega$ is a $\Sigma_1$-definable cut and $\cod(M/\mathbb{N})=\mathcal{X}$. Now the standard cut with the coded sets satisfies $\rttt$ so by \cref{rtoncuts} we have that $(M, \Delta_1\text{-Def}(M))\vDash\rttt$. Notice that in this model we have $\Delta_2(M)=\Delta^0_2(M)$, so the lightface version of $\Sigma^0_2$-separation is equivalent to the boldface version. Thus $(M, \Delta_1\text{-Def}(M))\vDash\neg\Sigma^0_2$-separation and hence $(M, \Delta_1\text{-Def}(M))\vDash\neg\coh$.
\end{proof}

\begin{corollary}
$\rt{2}{2}$, $\cac$, and $\ads$ are incomparable with $\coh$ with respect to implications over $\rcas$. 	
\end{corollary}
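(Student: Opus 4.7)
The plan is to show incomparability by treating the two directions separately.

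For the direction that none of $\rt{2}{2}$, $\cac$, $\ads$ implies $\coh$ over $\rcas$, I would invoke \cref{rtttdoesnotimplycoh} directly: it already supplies a model of $\rcas + \rt{2}{2} + \neg\coh$. Because $\rt{2}{2}$ implies both $\cac$ and $\ads$ over $\rcas$ by \cref{lem:easy-implications}, this same model simultaneously witnesses $\rcas + \cac \nvdash \coh$ and $\rcas + \ads \nvdash \coh$, so all three non-implications fall out at once.

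For the direction that $\coh$ does not imply $\rt{2}{2}$, $\cac$, or $\ads$ over $\rcas$, it suffices to establish $\rcas + \coh \nvdash \ads$, since $\ads$ is the weakest of the three principles. Here I would appeal to two classical facts. First, $\coh$ is $\Pi^1_1$-conservative over $\rca$ by Cholak--Jockusch--Slaman \cite{CholakJockuschSlaman}. Second, $\ads$ implies (every instance of) $\bs{0}{2}$ over $\rca$, a fact recalled in the paragraph following \cref{separation-over-rcas}; the universal closure of an appropriate instance of $\bs{0}{2}$ is a genuine $\Pi^1_1$ sentence not provable in $\rca$. Combining these facts yields a model of $\rca + \coh + \neg\ads$, which is \emph{a fortiori} a model of $\rcas + \coh + \neg\ads$.

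There is no real obstacle; the corollary is essentially a packaging of \cref{rtttdoesnotimplycoh}, \cref{lem:easy-implications}, and the known separation of $\coh$ from $\ads$ over $\rca$. The only minor point requiring care is to make sure the instance of $\bs{0}{2}$ used in the second half is invoked as a genuine $\Pi^1_1$ sentence (universally quantified over its set parameters), so that it is properly excluded by the $\Pi^1_1$-conservativity of $\coh$ over $\rca$.
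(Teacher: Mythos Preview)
Your proposal is correct and matches the paper's intended argument, which leaves the corollary without an explicit proof precisely because it follows from \cref{rtttdoesnotimplycoh} together with the known fact that $\coh$ does not imply $\ads$ over $\rca$. Your second direction via $\Pi^1_1$-conservativity of $\coh$ over $\rca$ and $\ads \vdash \bs{0}{2}$ is a legitimate way to package that known fact; one could equally well cite directly that $\rca + \coh \nvdash \bs{0}{2}$ (Hirschfeldt--Shore \cite{hirschfeldtShore}), which is the more commonly invoked formulation, but the content is the same.
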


%The previous results also witness that the usual proof of $\crttt$ from $\ads$, which indeed exploits the fact that $\coh$ lies in between the two statements, fails over $\rcas$. As a matter of fact, over $\rcas$, the relation between $\crttt$ and $\cac$ and $\ads$ remains unclear.

Another consequence of \cref{coh-computably-false}
is that an analogue of \cref{W_Cut} does not hold for $\coh$. In particular, it is not true that if $(M, \mc{X})\vDash \rcas$ and $(I, \codmi)\vDash \coh$ for some $\Sigma^0_1$-cut $I$ of $M$, then $(M, \mc{X})\vDash \coh$, since in a model of $\neg \is{}{1}$ this would work in particular for $\mc{X} = \Delta_1\text{-Def}(M)$. On the other hand, using methods in the style of Section \ref{Sec_Slim} it is easy to show that the converse implication still holds.

\begin{proposition}
For every $(M,\mc{X}) \vDash \rcas$ and every proper $\Sigma^0_1$-cut $I$ in $(M,\mc{X})$, if $(M,\mc{X}) \vDash \coh$, then $(I,\codmi) \vDash \coh$.
\end{proposition}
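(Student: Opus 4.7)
The plan is to adapt the first (easy) direction of the proof of \cref{W_Cut} to the sequence-valued instance of $\coh$. Given $(M,\mc{X}) \vDash \rcas + \coh$ and a proper $\Sigma^0_1$-cut $I$, I would fix the cofinal enumeration $A = \{a_i \mid i \in I\} \in \mc{X}$ from \cref{cofinalset}, together with an instance $(R_n)_{n \in I}$ in $\codmi$ --- a sequence coded by a single $s \in M$ in the sense that $R_n = I \cap \{x \in M : \langle n, x \rangle \in_{\ack} s\}$ for each $n \in I$. For $x \in M$, let $j(x)$ denote the unique $j \in I$ with $a_{j-1} < x \le a_j$; this is well-defined since $A \cof M$, and the graph of $j$ is $\Delta_0(A)$.

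Next I would \emph{blow up} the instance to all of $M$: define $R'_n = \{x \in M : \langle n, j(x)\rangle \in_{\ack} s\}$ and note that the joint set $R' = \{\langle n,x\rangle : x \in R'_n\}$ is $\Delta_1(A,s)$-definable, hence lies in $\mc{X}$. Applying $\coh$ in the ambient model to $(R'_n)_{n \in M}$ produces some $C' \in \mc{X}$ with $C' \cof M$ that is cohesive for this sequence. I would then set
\[
C = \{j \in I : C' \cap (a_{j-1}, a_j] \neq \emptyset\}
\]
and verify three things: $C \in \codmi$, by checking that both $C$ and $I \setminus C$ are $\Sigma^0_1$-definable and invoking \cref{ChongMourad}; $C \cof I$, which is immediate from $C' \cof M$ together with $A \cof M$; and $C$ is cohesive for $(R_n)_{n \in I}$.

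The last point is where the main care is required, and it hinges on the identity that for every $n \in I$ and $x \in M$ one has $x \in R'_n$ iff $j(x) \in R_n$. If $C' \subseteq^* R'_n$ with exceptional elements bounded by some $b \in M$, then any $j \in C$ with $j > j(b)$ satisfying $j \notin R_n$ would yield a witness $x \in C' \cap (a_{j-1}, a_j]$ lying above $b$ with $x \notin R'_n$, a contradiction; hence $\{j \in C : j \notin R_n\}$ is bounded by $j(b)$ in $I$, so $C \subseteq^* R_n$ in $\codmi$. The case $C' \subseteq^* \overline{R'_n}$ is symmetric. The only slightly delicate bookkeeping is in the $\Sigma^0_1$-definability of $I \setminus C$: one uses the $\Sigma_1$-definition of $I$ to witness $j \in I$ together with the values of $a_{j-1}$ and $a_j$, after which the condition $C' \cap (a_{j-1}, a_j] = \emptyset$ collapses to a bounded universal quantifier and the whole formula is $\Sigma_1(A,C')$.
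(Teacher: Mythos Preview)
Your proof is correct and follows essentially the same approach as the paper: blow up the coded instance to an $M$-indexed sequence via the cofinal set $A$, apply $\coh$ in $(M,\mc{X})$, and pull the cohesive set back to $I$ using \cref{ChongMourad}. The only cosmetic difference is that the paper also stretches the \emph{index} variable (setting $R'_n$ equal to the blow-up of $R_{j(n)}$ rather than of $R_n$), whereas you leave $R'_n$ for $n \notin I$ as whatever the code $s$ happens to give; since cohesiveness is only needed at indices $n \in I$, your simplification is harmless.
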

\begin{proof}
Suppose $(M, \mathcal{X})\vDash\rcas+\coh$ and $I$ is a proper $\Sigma^0_1$-cut in $(M,\mc{X})$. Let  $A\in\mathcal{X}$ be a cofinal subset of $M$ enumerated as $A=\{a_i \mid i\in I\}$, as in \cref{cofinalset}. 

Let $(R_i)_{i \in I}$ be a sequence of subsets of $I$ that belongs to $\codmi$. Define a sequence $(R'_n)_{n \in M}$ in the following way. If $n \in (a_{i-1},a_i]$ for some $i \in I$, let \[R'_n=\{x \in M \mid \exists j \in I\,(x \in (a_j,a_{j+1}]) \land j \in R_i)\}.\] The sequence $(R'_n)_{n \in M}$ is $\Delta_1$-definable in $A$ and the code for $(R_i)_{i \in I}$, so it belongs to $\mc{X}$.
By $\coh$ in $(M,\mc{X})$, there exists $C' \in \mc{X}$ such that $C' \cof M$ and $C'$ is cohesive for $(R'_n)_{n \in M}$. Define $C = \{i \in I \mid C' \cap (a_i,a_{i+1}] \ne \emptyset \}$. Both $C$ and $I \setminus C$ are $\Sigma_1$-definable in $C'$ and $A$, so $C \in \codmi$ by \cref{ChongMourad}. Moreover, $C \cof I$ and it is easy to check that $C$ is cohesive for $(R_i)_{i \in I}$.
\end{proof}

Results such as \cref{coh-computably-false} and \cref{rtttdoesnotimplycoh} provide some new information about $\coh$, but the strength of this principle in $\rcas$ is still to a large extent mysterious. Some rather basic problems remain open.

\begin{question}
Does $\coh$, or at least $\lcoh$, imply $\iso$ over $\rcas$? Is $\lcoh$, or at least $\coh$,
$\forall\Pi^0_3$-conservative over $\rcas$?
\end{question}

\begin{question}
Does $\rcas$, or at least $\wkls$, prove $\coh \biimp \lcoh$?
\end{question}
\medskip

\subsection*{Acknowledgement}
The authors are grateful to Tin Lok Wong 
and Keita Yokoyama for valuable comments on
an early draft version of this work.

\bibliographystyle{alpha}
\bibliography{bibliography}

\end{document}